\newcommand\specialsectioning{\setcounter{secnumdepth}{-2}}
\DeclareTextSymbolDefault{\textquotedbl}{T1}
\theoremstyle{plain}
\newtheorem{them}{Theorem}
\theoremstyle{plain}
\newtheorem{thm}{\protect\theoremname}[section]
\theoremstyle{plain}
\newtheorem*{thm*}{\protect\theoremname}
\theoremstyle{plain}
\theoremstyle{plain}
\newtheorem{lem}[thm]{\protect\lemmaname}
\theoremstyle{definition}
\newtheorem{defn}[thm]{\protect\definitionname}
\theoremstyle{definition}
\newtheorem{rem}[thm]{\protect\remarkname}
\theoremstyle{plain}
\newtheorem{prop}[thm]{\protect\propositionname}
\theoremstyle{definition}
\newtheorem*{rem*}{\protect\remarkname}
\theoremstyle{plain}
\newtheorem{cor}[thm]{\protect\corollaryname}
\theoremstyle{plain}
\newtheorem*{fact*}{\protect\factname}
\theoremstyle{definition}
\theoremstyle{definition}
\newcommand{\Z}{\mathbb{Z}}
\newcommand{\X}{\mathcal{X}}
\newcommand{\Y}{\mathcal{Y}}
\newcommand{\R}{\mathbb{R}}
\DeclareMathOperator{\RP}{RP}
\DeclareMathOperator{\PM}{P_T}
\DeclareMathOperator{\PMM}{P_{T\times T}}
\DeclareMathOperator{\pitk}{\pi_k^{top}}
\DeclareMathOperator{\cktop}{C_k^{top}}
\DeclareMathOperator{\ckme}{C_k^{meas}}
\DeclareMathOperator{\OC}{\overline{\mathcal{O}}}
\DeclareMathOperator{\id}{Id}
\DeclareMathOperator{\C}{C}
\DeclareMathOperator{\T}{T}
\DeclareMathOperator{\Homeo}{Homeo}
\newcommand{\cfnk}{\text{CF-Nil}$(k)$ }
\newcommand{\cfnkp}{\text{CF-Nil}($k$).}
\newcommand{\cfnkm}{\text{CF-Nil}($k-1$) }
\newcommand{\cfnkmp}{\text{CF-Nil}($k-1$).}
\newcommand{\cfnone}{\text{CF-Nil}($1$)}
\newcommand{\cfni}{\text{CF-Nil}($i$) }
\newenvironment{lyxlist}[1]
	{\begin{list}{}
		{\settowidth{\labelwidth}{#1}
		 \setlength{\leftmargin}{\labelwidth}
		 \addtolength{\leftmargin}{\labelsep}
		 }}
	{\end{list}}
\providecommand*{\strong}[1]{\textbf{#1}}
\newcommand\blfootnote[1]{ 
\begingroup 
\renewcommand\thefootnote{}\footnote{#1}
\addtocounter{footnote}{-1}
\endgroup 
}
\providecommand{\corollaryname}{Corollary}
  \providecommand{\definitionname}{Definition}
  \providecommand{\factname}{Fact}
  \providecommand{\lemmaname}{Lemma}
  \providecommand{\propositionname}{Proposition}
  \providecommand{\remarkname}{Remark}
\providecommand{\theoremname}{Theorem}
\newcommand{\h}{\hat}
\providecommand{\corollaryname}{Corollary}
\providecommand{\definitionname}{Definition}
\providecommand{\factname}{Fact}
\providecommand{\lemmaname}{Lemma}
\providecommand{\propositionname}{Proposition}
\providecommand{\remarkname}{Remark}
\providecommand{\theoremname}{Theorem}
\newcommand{\Addresses}{{
  \bigskip
  \footnotesize

   \textsc{Institute of Mathematics, Polish Academy of Sciences, ul. \'{S}niadeckich 8, 00-656 Warszawa, Poland.}\par\nopagebreak
  
  Yonatan Gutman: \texttt{y.gutman@impan.pl}
   
  \medskip

  \textsc{ School of Mathematical Sciences, Xiamen University, Xiamen, Fujian 361005, P.R. China;}
  
 Zhengxing Lian: \texttt{lianzx@mail.ustc.edu.cn; lianzx@xmu.edu.cn}

}}
\title[Pronilfactors and a topological Wiener-Wintner theorem]{Maximal pronilfactors and a topological Wiener-Wintner theorem}
\author{
 Yonatan Gutman \& Zhengxing Lian
}
\dedicatory{To Benjamin Weiss with great respect.}
\begin{document}

\maketitle

\begin{abstract}
  For strictly ergodic systems, we introduce the class of CF-Nil($k$) systems: systems
for which the maximal measurable and maximal topological $k$-step pronilfactors coincide as
measure-preserving systems. Weiss' theorem implies that such systems are abundant in a precise sense. We show that the \cfnk systems are precisely the class of minimal systems for which the $k$-step nilsequence version of the Wiener-Wintner average converges everywhere. As part of the proof we establish that pronilsystems are \textit{coalescent} both in the measurable and topological categories. In addition, we characterize a \cfnk system in terms of its \textit{$(k+1)$-th dynamical cubespace}. In particular, for $k=1$, this provides for strictly ergodic systems a new condition equivalent to the property that every measurable eigenfunction has a continuous version. 

\end{abstract}

\blfootnote{The authors were partially supported by the National Science Centre (Poland) grant 2016/22/E/ST1/00448. Y.G. was partially supported by the National Science Centre (Poland) grant 2020/39/B/ST1/02329. Z.L. was partially supported by the Xiamen Youth Innovation Foundation No. 3502Z20206037; the presidential research fund of Xiamen University No. 20720210034 and NNSF of China No. 1210010472.}

\blfootnote{Keywords: coalescence; cubespace; nilsequence; maximal pronilfactor; strictly ergodic;  topological model; universality; topological Wiener-Wintner theorem.}

\blfootnote{Mathematics Subject Classification (2020): 37A05, 37B05.}

\tableofcontents{}\label{content}
\section{Introduction.}\label{sec:Introduction}
In recent years there has been an increase in interest in pronilfactors both for measure-preserving systems (m.p.s.) and topological dynamical systems (t.d.s.).  Pronilfactors of a given system are either measurable or topological (depending on the category) factors given by an inverse limit of nilsystems. A t.d.s. (m.p.s.) is called a topological (measurable) \textit{$d$-step pronilsystem}
if it is a topological (measurable) inverse limit of nilsystems of degree at most $d$.\footnote{
It is a classical fact that every (measurable) ergodic $d$-step pronilsystem is isomorphic as m.p.s. to a (topological) minimal $d$-step pronilsystem.} In the theory of measure preserving systems $(X,\mathcal{X},\mu,T)$ maximal measurable pronilfactors appear in connection with the $L^2$-convergence of the nonconventional ergodic averages
\begin{equation}\label{eq: convergence of multiple ergodic averages}
\frac{1}{N}\sum f_{1}(T^{n}x)\ldots f_{k}(T^{kn}x)
\end{equation}
for $f_1,\ldots,f_k\in L^\infty(X,\mu)$ (\cite{HK05,Z07}). 
In the theory of topological dynamical systems maximal topological pronilfactors appear in connection with the higher order regionally proximal relations (\cite{HKM10,SY12,GGY2018}).

When a system possesses both measurable and topological structure, it seems worthwhile  to investigate pronilfactors both from a measurable and topological point of view. A natural meeting ground are strictly ergodic systems - minimal topological dynamical systems $(X,T)$ possessing a unique invariant measure $\mu$. For $k\in \mathbb{Z}$ let us denote by $(Z_k(X),\mathcal{Z}_k(X),\mu_k,T)$ respectively $(W_k(X),T)$ the  maximal $k$-step measurable respectively topological pronilfactor\footnote{Both these objects exist and are unique in a precise sense. See Subsection \ref{subsec:Universality}.} of $(X,T)$. Clearly $(W_k(X),T)$ has a unique invariant measure $\omega_k$. We thus pose the question when is $(W_k(X), \mathcal{W}_k(X), \omega_k,T)$ isomorphic to $(Z_k(X),\mathcal{Z}_k(X),\mu_k,T)$ as m.p.s.? We call a t.d.s. which is strictly ergodic and for which $(W_k(X), \mathcal{W}_k(X), \omega_k,T)$ is isomorphic to $(Z_k(X),\mathcal{Z}_k(X),\mu_k,T)$ as m.p.s., a \textit{\cfnk} system\footnote{This terminology is explained in Subsection \ref{subsec: def:continuously to k-nil}.}.
Note that $(W_k(X), \mathcal{W}_k(X), \omega_k,T)$ is always a measurable factor of $(Z_k(X),\mathcal{Z}_k(X),\mu_k,T)$. At first glance it may seem that \cfnk systems are rare however a theorem by Benjamin Weiss regarding topological models for  measurable extensions implies that every ergodic m.p.s. is measurably isomorphic to a \cfnk system\footnote{See Subsection \ref{subsec:Weiss thm}.}.

We give two characterizations of \cfnk systems. The first characterization is related to the Wiener-Wintner theorem while the second characterization is related to \textit{$k$-cube uniquely ergodic} systems - a class of topological dynamical systems introduced in \cite{GL2019}.

The Wiener-Wintner theorem (\cite{WW1941}) states that for an ergodic system $(X,\mathcal{X},\mu,T)$, for $\mu$-a.e. $x\in X$, \textit{any} $\lambda\in \mathbb{S}^1$ and any $f\in L^\infty(\mu)$, the following limit exists:
\begin{equation}\label{WW theorem converge}\lim_{N\rightarrow \infty}\frac{1}{N}\sum_{n=1}^N \lambda^nf(T^nx)
\end{equation}
Denote by $M_T\subset \mathbb{S}^1$ the set of measurable eigenvalues\footnote{Measurable and topological eigenvalues are defined in Subsection \ref{subsec:Dynamical-background.}.} of $(X,\mathcal{X},\mu,T)$. Let $P_{\lambda}f$ be the projection of $f$ to the eigenspace corresponding to $\lambda$ (in particular for $\lambda\notin M_T$, $P_{\lambda}f\equiv 0$). For fixed $\lambda\in \mathbb{S}^1$, one can show  \eqref{WW theorem converge} converges a.s. to $P_{\lambda}f$.

In  \cite{Lesigne1990} Lesigne proved that a.s. convergence in \eqref{WW theorem converge} still holds when the term $\lambda^n$ is replaced by a (continuous function) of a real-valued polynomial $P(n)$, $P\in \mathbb{R}[t]$. In \cite{frantzikinakis2006uniformity} Frantzikinakis established a \textit{uniform version}\footnote{In the context of the Wiener-Wintner theorem, \textit{uniform versions} are a.s. convergence results involving a supremum over weights belonging to a given class. The first result of this type was obtained by Bourgain in \cite{B90}.} of this theorem. In \cite{HK09}, Host and Kra showed that a.s. convergence in \eqref{WW theorem converge} still holds when the term $\lambda^n$ is replaced by a \textit{nilsequence}. In \cite{EZK13} Eisner and Zorin-Kranich established a uniform version of this theorem.

For topological dynamical systems one may investigate the question of \textit{everywhere} convergence in the Wiener-Wintner theorem.
In \cite{Robinson1994}, Robinson proved that
 for an uniquely ergodic system $(X,\mu,T)$, for any $f\in C(X)$, if every measurable eigenfunction of $(X,\mathcal{X},\mu,T)$ has a continuous version then the limit \eqref{WW theorem converge} converges everywhere. He noted however that if $P_\lambda f\neq 0$ for some $\lambda \in M_T$, then the convergence of \eqref{WW theorem converge} is not uniform in $(x,\lambda)$, since the limit function $P_\lambda f(x)$ is not continuous on $X\times \mathbb{S}^1$.\footnote{Note $M_T$ is countable.} Moreover Robinson constructed a strictly ergodic system $(X,T)$ such that \eqref{WW theorem converge} does not converge for some continuous function $f\in C(X)$, some $\lambda\in \mathbb{C}$ and some $x\in X$. Other topological versions of the Wiener-Wintner theorem may be found in \cite{Assani1992,Fan2018}\footnote{One should also note that topological Wiener-Winter theorems have been investigated in the generality of operator semigroups by Schreiber and Bartoszek and Śpiewak (\cite{schreiber2014topological,bartoszek2017note}).}. 
 
 The first main result of this article is the following theorem: 
\begin{them}\label{TWWT main}
Let $(X,T)$ be a minimal system. Then for $k\geq 0$ the following are equivalent:
\begin{lyxlist}{0.0000}
     \item [(I).] $(X,T)$ is a \cfnk system.
     \item [(II).] For any $k$-step nilsequence $\{a(n)\}_{n\in \mathbb{Z}}$, any continuous function $f\in C(X)$ and any $x\in X$,
    \begin{equation}\label{TWWT limit}\lim_{N\rightarrow \infty}\frac{1}{N}\sum_{n=1}^Na(n)f(T^nx)
    \end{equation}
    exists.
\end{lyxlist}   
\end{them}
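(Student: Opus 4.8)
The plan is to translate the convergence statement into a question about unique ergodicity of product systems. First I would reduce (II) to the case of a \emph{basic} $k$-step nilsequence $a(n)=F(g^{n}\Gamma)$ arising from a minimal $k$-step nilsystem $(N,R_g)$ with $N=G/\Gamma$ and base point $e\Gamma$; the general case follows since nilsequences are uniform limits of basic ones and the averaging operators in \eqref{TWWT limit} are uniformly bounded. With this reduction the average \eqref{TWWT limit} is exactly the Birkhoff average of the function $f\otimes F\in C(X\times N)$ along the $(T\times R_g)$-orbit of the point $(x,e\Gamma)$. Thus (II) asserts, via Stone--Weierstrass, that Birkhoff averages of \emph{all} continuous functions on $X\times N$ converge at every point of the form $(x,e\Gamma)$; equivalently, every such point is generic for some $(T\times R_g)$-invariant measure $\nu_x$ on $\OC(x,e\Gamma)$.

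For the implication (I)$\Rightarrow$(II) I would show that each orbit closure $\OC(x,e\Gamma)\subseteq X\times N$ is uniquely ergodic, which immediately yields convergence (indeed to a constant) everywhere. Projecting by the continuous factor map $X\to W_k(X)$, the image orbit closure inside $W_k(X)\times N$ is the orbit closure of a point in a product of two $k$-step pronilsystems, hence is itself a minimal $k$-step nilsystem and therefore uniquely ergodic with some measure $\omega_x$. It remains to lift this unique ergodicity to $X\times N$. Any $(T\times R_g)$-invariant measure $\nu$ on $\OC(x,e\Gamma)$ has $X$-marginal $\mu$ (by strict ergodicity of $X$) and $N$-marginal the Haar measure $m_N$ (by unique ergodicity of $N$), so $\nu$ is a joining of $(X,\mu)$ with $(N,m_N)$. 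Since $N$ is a $k$-step nilsystem, such a joining is relatively independent over the maximal measurable pronilfactor, i.e.\ $\nu$ is determined by its projection to $Z_k(X)\times N$. The hypothesis $Z_k(X)\cong W_k(X)$ defining the \cfnk property then identifies this projection with $\omega_x$; hence $\nu$ is unique, proving unique ergodicity and therefore (II).

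The implication (II)$\Rightarrow$(I) is where the main difficulty lies. Taking the trivial nilsequence $a(n)\equiv 1$ shows $X$ is uniquely ergodic, hence strictly ergodic. It then remains to prove that the (always existing) measurable factor map $q\colon Z_k(X)\to W_k(X)$ is an isomorphism of measure-preserving systems. Realizing $Z_k(X)$ as a minimal topological $k$-step pronilsystem and applying (II) with $N=Z_k(X)$, I would use the everywhere-defined limits to manufacture a \emph{continuous} factor map from $X$ onto $Z_k(X)$ compatible with the measurable map $\pi_k$; once such a continuous map exists it must factor through the maximal topological pronilfactor $W_k(X)$, and coalescence of pronilsystems (established in this paper) forces the resulting endomorphism of $Z_k(X)$ to be an automorphism, giving $Z_k(X)\cong W_k(X)$ and hence the \cfnk property.

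The crux---and the step I expect to be the main obstacle---is promoting the merely pointwise (everywhere) convergence supplied by (II) to the continuity needed to build that factor map. Pointwise genericity of every point $(x,e\Gamma)$ does not by itself yield unique ergodicity of $X\times N$, and the everywhere limit is a priori only a Baire-class-one function. The decisive point is an automatic-uniformity phenomenon: in the strictly ergodic setting, everywhere convergence of the weighted averages for all continuous $f$ in fact forces uniform convergence, so the limit is continuous. I would establish this using the distal structure and unique ergodicity of the nilsystem fibres together with the relative-independence description of the generic measures $\nu_x$ obtained above; continuity of the limit then delivers the continuous version of $\pi_k$ and closes the argument via coalescence.
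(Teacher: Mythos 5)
Your reduction of (II) to genericity of points in $X\times N$ is sound, and your sketch of (I)$\Rightarrow$(II) via joinings is essentially the Host--Kra argument that the paper simply cites. The genuine gap is in (II)$\Rightarrow$(I), at exactly the step you flag as the crux. The ``automatic-uniformity phenomenon'' you invoke --- that in the strictly ergodic setting everywhere convergence of the weighted averages forces uniform convergence and hence a continuous limit --- is not a known fact you can lean on; for $k=1$ it is essentially equivalent to the statement that every measurable eigenfunction has a continuous version, i.e.\ to the very conclusion \cfnone\ you are trying to reach. You give no mechanism for proving it (``distal structure and unique ergodicity of the nilsystem fibres'' is not an argument), and the subsequent plan of manufacturing a continuous factor map $X\to Z_k(X)$ from the limit function is left entirely unspecified. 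As stated, the argument is circular at its central step.

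What you are missing is that no continuity of the limit is needed. Since a nilsequence is $h(S^nu)$ for an \emph{arbitrary} point $u$ of a pronilsystem, hypothesis (II) makes \emph{every} point of $X\times \hat Z_k$ generic for some measure (Lemma \ref{lem:genericity}), and by Theorem \ref{thm:genericity} a \emph{minimal} system all of whose points are generic is uniquely ergodic --- so every minimal subsystem of $X\times\hat Z_k$ is uniquely ergodic. The paper's proof runs the contrapositive through this observation, by induction on $k$: assuming $X$ is \cfnkm but not \cfnkp, it takes the topological group-extension model $\hat\pi:\hat Z_k\to\hat Z_{k-1}$ of $Z_k\to Z_{k-1}$, forms the graph joining $\mu^{(k)}=(\id\times\pi_{\hat Z_k})_*\mu$, passes to a minimal subsystem $Y$ of the orbit closure of a well-chosen generic point, and uses the fibrewise action of the structure group $A$ to translate $\mu^{(k)}$ onto $Y$; if $Y$ were uniquely ergodic it would be \cfnk by Proposition \ref{pronilfactor maximal}, hence so would its factor $X$ by Theorem \ref{factor and k kronecker} --- a contradiction producing a non-generic point and a divergent average. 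The induction, the group-extension model, and the translation of the graph measure are the ingredients your proposal lacks, and they are what replaces the continuity claim you could not justify.
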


We remark that the direction (I)$\Rightarrow $(II) of Theorem \ref{TWWT main} follows from \cite{HK09} whereas the case $k=1$ of Theorem \ref{TWWT main} follows from \cite[Theorem 1.1]{Robinson1994}.

As part of the proof of Theorem \ref{TWWT main} we established a fundamental property for pronilsystems:

\begin{them}\label{thm:coalescense}
Let $(Y,\nu,T)$ be a minimal (uniquely ergodic) $k$-step pronilsystem. Then
\begin{lyxlist}{0.0000}
    \item [(I).]
$(Y,\nu,T)$ is measurably coalescent, i.e. if $\pi:(Y,\nu,T)\rightarrow (Y,\nu,T)$ is a measurable factor map, then $\pi$ is a measurable isomorphism.
\end{lyxlist}   
and 
\begin{lyxlist}{0.0000}
    \item [(II).]
 $(Y,T)$ is topologically coalescent, i.e. if $\Phi:(Y,T)\rightarrow (Y,T)$ is a topological factor map, then $\Phi$ is a topological isomorphism.
 \end{lyxlist}   
\end{them}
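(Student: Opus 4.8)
The plan is to prove the measurable statement (I) first and then deduce the topological statement (II) from it, using distality and unique ergodicity. For (I), I would exploit that the maximal $i$-step measurable pronilfactors $(Z_i(Y),\mathcal{Z}_i(Y),\nu_i,T)$ for $0\le i\le k$ form a \emph{canonical} tower $\{\mathrm{pt}\}=Z_0\leftarrow Z_1\leftarrow\cdots\leftarrow Z_k=Y$ (here $Z_k(Y)=Y$ since $Y$ is $k$-step). Given a measurable self-factor map $\pi$, the composite $Y\xrightarrow{\pi}Y\to Z_i(Y)$ is a factor map onto an $i$-step pronilsystem, so by maximality it factors through $Z_i(Y)$; hence $\pi^{-1}\mathcal{Z}_i\subseteq\mathcal{Z}_i$ and $\pi$ induces compatible self-factor maps $\pi_i\colon Z_i(Y)\to Z_i(Y)$. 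The claim that $\pi$ is an isomorphism is then proved by a finite induction showing each $\pi_i$ is a measure isomorphism.

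The base case $i=1$ is the Kronecker factor, an ergodic rotation on a compact abelian group whose $L^2$ is spanned by the characters (eigenfunctions). Since $\pi_1$ commutes with the Koopman operator, $\pi_1^\ast$ sends an eigenfunction to an eigenfunction with the same eigenvalue; minimality makes the eigenvalue map $\chi\mapsto\chi(\alpha)$ injective, so the spectrum is simple and $\pi_1^\ast\chi=c_\chi\chi$ with $|c_\chi|=1$. Thus $\pi_1^\ast$ is onto $L^2$, i.e. $\pi_1$ is a measure isomorphism. For the inductive step I would use that $Z_i(Y)\to Z_{i-1}(Y)$ is an extension by a compact abelian structure group $U_i$ with cocycle $\sigma_i$, and that $\pi_i$ covers the isomorphism $\pi_{i-1}$. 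Writing $\pi_i(z,u)=(\pi_{i-1}(z),\theta(z)\phi_i(u))$ with $\phi_i\colon U_i\to U_i$ a continuous endomorphism, commutation with $T$ yields the cocycle identity $\phi_i\circ\sigma_i\sim\sigma_i\circ\pi_{i-1}$. The whole inductive step reduces to showing $\phi_i\in\mathrm{Aut}(U_i)$, for then $\pi_i$ is an isomorphism.

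The main obstacle is exactly this last point: a surjective continuous endomorphism of a compact abelian group need not be injective (the map $u\mapsto 2u$ on $\mathbb{T}$), so neither surjectivity nor injectivity of $\phi_i$ is formal. What rules this out is the \emph{rigidity of the Host--Kra structure cocycle}: because $U_i$ is the reduced structure group and $\sigma_i$ is a genuine degree-$i$ (``type $i$'') cocycle, the relation $\phi_i\circ\sigma_i\sim\sigma_i\circ\pi_{i-1}$ with $\pi_{i-1}$ an isomorphism forces $(\phi_i-\mathrm{id})$ applied to $\sigma_i$ to be a coboundary, which — since the cohomology class of $\sigma_i$ is nondegenerate and not ``divisible'' — can hold only when $\phi_i$ is an automorphism. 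Dualizing via $\widehat{U_i}$ (where $\phi_i^\ast$ must be a bijection) is the natural way to make this precise. An alternative route for (I) is to reduce to a single nilsystem by an inverse-limit argument and invoke the algebraicity of measure-theoretic factor maps between ergodic nilsystems (they agree a.e. with affine maps), reducing coalescence to the statement that a surjective affine self-map of $G/\Gamma$ commuting with a minimal translation is bijective; I expect the cocycle-rigidity formulation to be the cleaner one to carry out.

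Finally I would derive (II) from (I). A topological factor map $\Phi\colon(Y,T)\to(Y,T)$ satisfies $\Phi_\ast\nu=\nu$ by unique ergodicity, so it is a measure-preserving self-factor map and hence, by (I), a measurable isomorphism with a.e.\ inverse $\Psi$, $\Psi\circ\Phi=\mathrm{Id}$ $\nu$-a.e. Consider the closed $T\times T$-invariant relation $R=\{(y,y'):\Phi(y)=\Phi(y')\}\supseteq\Delta$. Since pronilsystems are distal, $Y\times Y$ is a disjoint union of minimal sets, and $R$, being closed and invariant, is a union of some of them including $\Delta$. If $M\subseteq R$ is any minimal set, it projects onto $Y$ in each coordinate, so any invariant measure $\lambda$ on $M$ pushes to $\nu$ under both projections (unique ergodicity); as $\lambda$ is concentrated on $R$ we get $y=\Psi\Phi(y)=\Psi\Phi(y')=y'$ for $\lambda$-a.e.\ $(y,y')$, whence $\lambda(\Delta)=1$, forcing $M\cap\Delta\neq\emptyset$ and therefore $M=\Delta$. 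Thus $R=\Delta$, so $\Phi$ is injective; being a continuous bijection of the compact Hausdorff space $Y$, it is a homeomorphism, which is precisely (II).
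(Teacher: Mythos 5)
Your reduction of (II) to (I) is correct and rather elegant: unique ergodicity makes $\Phi$ measure-preserving, distality decomposes $Y\times Y$ into minimal sets so that the closed invariant relation $R=\{(y,y'):\Phi(y)=\Phi(y')\}$ is a union of minimal sets, and the marginal argument forces each such minimal set to be the diagonal. The problem is that (I), where you have placed all the real work, is not actually proved. The inductive step over the Host--Kra tower rests on two unestablished claims. First, that the self-factor map $\pi_i$ of the group extension $Z_i\to Z_{i-1}$ necessarily has the affine form $\pi_i(z,u)=(\pi_{i-1}(z),\theta(z)\phi_i(u))$ with $\phi_i$ a continuous endomorphism of $U_i$; this is a genuine structural result about factor maps of group extensions, not a formality. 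Second, and more seriously, the step you yourself flag as ``the main obstacle'' --- that $\phi_i$ must be an automorphism --- is not carried out. The identity you derive is $\phi_i\circ\sigma_i\sim\sigma_i\circ\pi_{i-1}$, and since $\pi_{i-1}$ is an isomorphism but not the identity, this does \emph{not} say that $(\phi_i-\mathrm{id})$ applied to $\sigma_i$ is a coboundary; the appeal to the class of $\sigma_i$ being ``nondegenerate and not divisible'' is not a statement with a precise meaning here, and no known rigidity theorem is cited that would deliver injectivity of $\phi_i$. As your own example $u\mapsto 2u$ on $\mathbb{T}$ shows, something must genuinely be proved at this point, and the proposal does not prove it.

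For comparison, the paper runs the deduction in the opposite direction, and this is what lets it avoid the cocycle analysis entirely. It first proves \emph{topological} coalescence using the fact (Donoso) that the Ellis semigroup $E(Y,T)$ of a minimal $k$-step pronilsystem is a $k$-step nilpotent group: if $\Phi(x)=y$ then the stabilizers satisfy $\mathcal{G}(Y,x)\subseteq\mathcal{G}(Y,y)$, and non-injectivity of $\Phi$ produces $\beta_0\in\mathcal{G}(Y,y)\setminus\mathcal{G}(Y,x)$; iterating the commutator $\beta_{i+1}=(u^{-1}\beta_i^{-1}u)\beta_i$ keeps the element in $\mathcal{G}(Y,y)\setminus\mathcal{G}(Y,x)$ while pushing it down the lower central series, so $k$ steps later one finds $\mathrm{id}\notin\mathcal{G}(Y,x)$, a contradiction. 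Measurable coalescence is then immediate from the Host--Kra result that a measurable factor map between minimal pronilsystems agrees a.e.\ with a continuous one. If you want to salvage your plan, the cleanest repair is to adopt that last ingredient (your own ``alternative route'' gestures at it) and then prove the topological statement directly, rather than trying to make the structure-group induction rigorous.
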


As part of the the theory of higher order regionally proximal relations, Host, Kra and Maass introduced in \cite{HKM10} the \textit{dynamical cubespaces}
$\C_{\T}^{n}(X)\subset X^{2^n}$, $n\in \mathbb{N}:=\{1,2,\ldots\}$. These compact sets enjoy a natural action by the \textit{Host-Kra cube groups} $\mathcal{HK}^{n}(T)$. According to the terminology introduced in \cite{GL2019}, a t.d.s. $(X,T)$ is called \textit{$k$-cube uniquely ergodic} if 
$(\C_{\T}^{k}(X),\mathcal{HK}^{k}(T))$
is uniquely ergodic. The third main result of this article is the following theorem:
\begin{them}
\label{thm:equiv cube u.e}
Let $(X,T)$ be a minimal t.d.s.  Then the following are equivalent for any $k\geq 0$:
\begin{lyxlist}{0.0000}
     \item [(I).] $(X,T)$ is a \cfnk system.
     \item [(II).] $(X,T)$ is
$(k+1)$-cube uniquely ergodic.
\end{lyxlist}  
\end{them}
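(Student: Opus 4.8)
The plan is to prove the two implications separately, in each case exploiting the rigid relationship between the top-dimensional cubespace $\C_\T^{k+1}(X)$ and the $k$-step pronilfactors, and closing the argument with the coalescence of Theorem \ref{thm:coalescense}. Throughout I will use that the Host--Kra measure $\mu^{[k+1]}$ is a $\mathcal{HK}^{k+1}(T)$-invariant measure supported on $\C_\T^{k+1}(X)$, that the maximal measurable pronilfactor $Z_k(X)$ is characterised as the factor over which $\mu^{[k+1]}$ becomes relatively independent, and that the maximal topological pronilfactor is $W_k(X)=X/\RP^{[k]}$.

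For (I)$\Rightarrow$(II): a \cfnk system is strictly ergodic, so $(X,T)$ has a unique invariant measure $\mu$ and $\mu^{[k+1]}$ is an invariant measure on $\C_\T^{k+1}(X)$; it suffices to prove it is the only one. Given any $\mathcal{HK}^{k+1}(T)$-invariant measure $\rho$ on $\C_\T^{k+1}(X)$, each of its $2^{k+1}$ coordinate marginals is $T$-invariant, hence equals $\mu$, so $\rho$ is a cubic self-joining of $(X,\mu)$. Pushing $\rho$ forward coordinatewise to $\C_\T^{k+1}(W_k(X))$ gives an invariant measure on the top cubespace of the $k$-step pronilsystem $W_k(X)$; since such cubespaces are uniquely ergodic, this pushforward is the canonical cube measure on $W_k(X)$, which by the identification $W_k(X)\cong Z_k(X)$ coincides with the projection of $\mu^{[k+1]}$. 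It then remains to show that $\rho$ is relatively independent over the factor $X\to W_k(X)$: because the fibres of this map consist of $\RP^{[k]}$-related points, I expect any invariant cubic joining to carry no further correlation above $W_k(X)$, and relative independence together with the matching base projection forces $\rho=\mu^{[k+1]}$.

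For (II)$\Rightarrow$(I): projecting the unique invariant measure on $\C_\T^{k+1}(X)$ to a single coordinate shows $(X,T)$ is uniquely ergodic, hence strictly ergodic. Unique ergodicity of $(\C_\T^{k+1}(X),\mathcal{HK}^{k+1}(T))$ means the cube averages of every continuous function converge uniformly; I would use this to produce continuous versions of the conditional expectation data defining the measurable factor map $X\to Z_k(X)$, thereby realising $Z_k(X)$, in its minimal nilsystem model, as a \emph{topological} factor of $(X,T)$. By maximality of the topological pronilfactor, $Z_k(X)$ is then a factor of $W_k(X)$; since $W_k(X)$ is always a factor of $Z_k(X)$, the composite is a factor self-map of the pronilsystem $Z_k(X)$, which is an isomorphism by Theorem \ref{thm:coalescense}. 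Hence $W_k(X)\cong Z_k(X)$ as m.p.s., i.e. $(X,T)$ is a \cfnk system.

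The main obstacle is the passage between the topological cube structure and relative independence over the pronilfactor: concretely, showing in (I)$\Rightarrow$(II) that an arbitrary invariant measure on $\C_\T^{k+1}(X)$ carries no correlation beyond what is prescribed on $W_k(X)=Z_k(X)$, and dually, extracting in (II)$\Rightarrow$(I) a genuinely continuous version of the measurable factor map to $Z_k(X)$ out of the uniform convergence of the cube averages. These are two faces of the same rigidity, and it is precisely the equality $W_k(X)=Z_k(X)$ (equivalently, the strictness of the level-$(k+1)$ seminorm data) that makes it available; coalescence then enters only as the clean finishing step.
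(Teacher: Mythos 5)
Your overall architecture (reduce both directions to a comparison of $W_k(X)$ and $Z_k(X)$ and finish with coalescence) is reasonable, but both of the steps you flag as ``the main obstacle'' are in fact left unproved, and they are precisely where the content of the theorem lies. In (I)$\Rightarrow$(II) you write that you ``expect'' an arbitrary $\mathcal{HK}^{k+1}(T)$-invariant measure $\rho$ on $\C_{\T}^{k+1}(X)$ to be relatively independent over $W_k(X)$ because the fibres of $X\to W_k(X)$ consist of $\RP^{[k]}$-related points. That inference does not follow: $\RP^{[k]}$ is a purely topological proximality relation, and nothing in its definition controls the correlations an invariant cubic joining may carry above the factor. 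Establishing this relative independence is exactly the non-trivial argument of \cite[Section 4.4.3]{HSY2017}, which the paper invokes for this direction rather than reproving; without it your argument for (I)$\Rightarrow$(II) is incomplete.

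The gap in (II)$\Rightarrow$(I) is more serious because your proposed route diverges from anything that is known to work. You want to use uniform convergence of cube averages to ``produce continuous versions of the conditional expectation data defining the measurable factor map $X\to Z_k(X)$,'' i.e.\ to realise $Z_k(X)$ as a topological factor of $(X,T)$ directly. No mechanism for this is given, and it is not clear one exists: unique ergodicity of the cubespace gives uniform convergence of averages of continuous functions, but the objects defining $\pi_k$ are a priori only measurable, and there is no evident way to upgrade them. The paper's actual proof avoids this entirely. It introduces the conditional product measure $\gamma_{k+1}$ on $X^{[k+1]}$ relative to $(W_k(X)^{[k+1]},\omega_k^{[k+1]})$, checks it is $\mathcal{HK}^{k+1}(T)$-invariant, and then proves the key technical claim $\gamma_{k+1}(\C_{\T}^{k+1}(X))=1$ by expressing integrals against $\gamma_{k+1}$ as iterated ergodic averages along points of $\C_{\T}^{k}(X)$ (via Birkhoff and the Lindenstrauss pointwise ergodic theorem for the amenable group $\mathcal{HK}^{k}(T)$). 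Unique ergodicity then forces $\gamma_{k+1}=\mu^{[k+1]}$, and the Host--Kra--Gowers seminorm identity
\[
|||f|||_{k+1}^{2^{k+1}}=\int\prod_{v\in\{0,1\}^{k+1}}\mathbb{E}(\mathcal{C}^{|v|}f\,|\,W_k(X))(c_v)\,d\omega_k^{[k+1]}(c)
\]
shows that $\mathbb{E}(f|\mathcal{W}_k)=0$ implies $|||f|||_{k+1}=0$, hence $\mathbb{E}(f|\mathcal{Z}_k)=0$; combined with the maximality of $Z_k(X)$ this yields $\mathcal{W}_k=\mathcal{Z}_k$ without ever constructing a continuous map onto $Z_k(X)$. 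If you want to salvage your outline, you should replace your continuity-upgrading step with an argument of this kind; as written, the proposal identifies the right landmarks but does not cross the two gaps between them.
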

We remark that the direction (I) $\Rightarrow $ (II) follows from \cite{HSY2017}. \\

In the context of  various classes of  strictly ergodic systems, several authors have investigated the question of whether every measurable eigenfunction has a continuous version. Famously in \cite{Host1986} (see also \cite[Page 170]{M2010}), Host established this is the case for
 \textit{admissible substitution dynamical systems}. In \cite[Theorem 27]{BDM10} an affirmative answer was given for  strictly ergodic \textit{Toeplitz type systems of finite rank}. In \cite{DFM2019}, the continuous and measurable eigenvalues of minimal Cantor systems were studied.

It is easy to see that for strictly ergodic systems $(X,T)$ the condition that every measurable eigenfunction has a continuous version is equivalent to the fact that $(X,T)$ is \cfnone.
Thus Theorem \ref{thm:equiv cube u.e} provides  for strictly ergodic systems a new condition equivalent to the property that every measurable eigenfunction has a continuous version. Namely this holds iff $(\C_{\T}^{2}(X),\mathcal{HK}^{2}(T))$ is uniquely ergodic. As the last condition seems quite manageable one wonders if this new equivalence may turn out to be useful in future applications.\\

\noindent
\textbf{Structure of the paper.} In Subsections \ref{subsec:Dynamical-background.}--\ref{Subsec: Con exp} we review some definitions and classical facts; In Subsections \ref{subsec:Nilsystem-and-a system at most d}--\ref{subsec: def:continuously to k-nil}, we introduce the topological and measurable maximal pronilfactors and define the \cfnk systems; In Subsection \ref{subsec:Weiss thm}, we use Weiss's Theorem to show that the \cfnk systems are abundant; In Section \ref{uniqueness for Z and W}, we prove Theorem \ref{thm:coalescense} and then establish \textit{universality} for maximal pronilfactors; In Section \ref{sec:cubespace}, we prove Theorem \ref{thm:equiv cube u.e}; In Section \ref{sec TWWT}, we prove Theorem \ref{TWWT main}.\\

\noindent
\textbf{Acknowledgements.} We are grateful to Bernard Host, Mariusz Lemańczyk and and anonymous referee for helpful comments.  

\section{Preliminaries.}\label{sec:Preliminaries}

\subsection{\label{subsec:Dynamical-background.}Dynamical systems.} 
Throughout this article we assume every topological space to be metrizable. A $\Z$-\strong{topological dynamical system (t.d.s.)} is a pair $(X,T)$,
where $X$ is a compact space and $T$ is a homeomorphism on $X$. Denote by $C(X)$ the set of real-valued continuous functions on $X$. The \strong{orbit} $\mathcal{O}(x)$ of $x\in X$ is the set $\mathcal{O}(x)=\{T^n x:n\in \Z\}$. Its closure is denoted by $\OC(x)$ A t.d.s.  is \strong{minimal} if $\OC(x)=X$ for all $x\in X$. A t.d.s.  $(X,T)$ is \strong{distal} if for a compatible metric $d_{X}$
of $X$, for any $x\neq y\in X$, $\inf_{n\in \Z}d_{X}(T^n x,T^n y)>0$. We say $\pi:(Y,S)\rightarrow(X,T)$ is a \strong{topological factor map} if $\pi$ is a continuous and
surjective map such that for any $x\in X$, $\pi(S x)=T\pi(x)$. Given such a map, $(X,T)$ is called a \strong{topological factor} of $(Y,S)$ and $(X,T)$ is said to \strong{factor continuously} on $(Y,S)$. If in addition $\pi$ is injective then it is called a \strong{topological isomorphism} and $(Y,S)$ and $(X,T)$ are said to be \textbf{isomorphic as t.d.s.} 
A factor map $\pi:(Y,S)\rightarrow(X,T)$ is called a \strong{topological group extension} by a compact group $K$ if there exists a continuous action $\alpha:K\times Y\rightarrow Y$ such that the actions $S$ and $K$ commute and for all $x,y\in Y$, $\pi(x)=\pi(y)$ iff there
exists a unique $k\in K$ such that $kx = y$. 
A \textbf{(topological) eigenvalue} of a t.d.s. $(X,T)$ is a complex number $\lambda \in \mathbb{S}^1$ such that an equation of the form $f(Tx)=\lambda f(x)$ holds for some $f\in C(X,\mathbb{C})$ and all $x\in X$. The function $f$ is referred to as a \textbf{continuous} or \textbf{topological eigenfunction}.  

Let $\{(X_{m},T_m)\}_{m\in\mathbb{N}}$ be a sequence of t.d.s. and for any $m\geq n$,  $\pi_{m,n}:(X_{n},T_n)\rightarrow(X_{m},T_m)$ factor maps such
that $\pi_{i,l}=\pi_{i,j}\circ\pi_{j,l}\text{ for all }1\leq i\leq j\leq l.$ The \strong{inverse limit} of $\{(X_{m},T_m)\}_{m\in\mathbb{N}}$ is defined to be the system $(X,T)$, where

\[
X=\{(x_{m})_{m\in \mathbb{N}}\in\prod_{m\in\mathbb{N}}X_{m}:\ \pi_{m+1}(x_{m+1})=x_{m}\text{ for }m\geq1\}
\]
equipped with the product topology and $T(x_{m})_{m\in\mathbb{N}}\triangleq(T_mx_{m})_{m\in\mathbb{N}}$. We write
$(X,T)=\underleftarrow{\lim}(X_{m},T_m)$.

 A \strong{measure preserving probability system (m.p.s.)} is a quadruple $(X,\mathcal{X},\mu,T)$, where $(X,\mathcal{X},\mu)$
is a standard Borel probability space (in particular $X$ is a Polish space and $\mathcal{X}$ is its Borel $\sigma$-algebra) and $T$ is an invertible Borel measure-preserving map ($\mu(TA)=\mu(A)$ for all $A\in \mathcal{X}$). An m.p.s. $(X,\mathcal{X},\mu,T)$ is \strong{ergodic}
if for every set $A\in\mathcal{X}$ such that $T(A)=A$,
one has $\mu(A)=0$ or $1$. A \strong{measurable factor map} is a Borel map  $\pi:(X,\mathcal{X},\mu,T)\rightarrow (Y,\mathcal{Y},\nu,S)$ which is induced by a $G$-invariant sub-$\sigma$-algebra of $\mathcal{X}$ (\cite[Chapter 2.2]{G03}). Given such a map, $(Y,\mathcal{Y},\nu,S)$ is called a \strong{measurable factor} of $(X,\mathcal{X},\mu,T)$. If $\pi$ is in addition invertible on a set of full measure then $\pi$ is called a \strong{measurable isomorphism} and $(X,\mathcal{X},\mu,T)$ and $(Y,\mathcal{Y},\nu,S)$ are said to be \textbf{isomorphic as m.p.s.} 
Let $(Y,\mathcal{Y},\nu,S)$ be an m.p.s. and $A$ a compact group with Borel $\sigma$-algebra $\mathcal{A}$ and Haar measure $m$. A \textbf{skew-product} $(Y\times A,\mathcal{Y}\otimes \mathcal{A}, \nu\times m,T)$ is given by the action $T(y,u)=(Sy,\beta(y)u)$, where $\beta:Y\rightarrow A$ is a Borel map, the so-called \textit{cocycle} of the skew-product. The projection $(Y\times A,\mathcal{Y}\otimes \mathcal{A}, \nu\times m,T)\rightarrow (Y,\mathcal{Y},\nu,S)$ given by $(y,a)\mapsto y$ is called a \textbf{measurable group extension} (cf. \cite[Theorem 3.29]{G03}).

A \textbf{(measurable) eigenvalue} of a m.p.s. $(X,\mathcal{X},\mu,T)$ is a complex number $\lambda\in\mathbb{S}^1$ such that an equation of the form $f(Tx)=\lambda f(x)$ holds for $\mu$-a.e. $x\in X$ for some Borel function $f:X\rightarrow \mathbb{C}$. The function $f$ is referred to as a \textbf{measurable eigenfunction}.

Denote by $\PM(X)$ the set of $T$-invariant Borel probability measures of $X$. A t.d.s. $(X,T)$ is called \textbf{uniquely ergodic} if $|\PM(X)|=1$. If in addition it is minimal then it is called \textbf{strictly ergodic}. For a strictly ergodic system $(X,T)$ with a (unique) invariant measure $\mu$, we will use the notation $(X,\mu,T)$. When considered as a m.p.s. it is with respect to its Borel $\sigma$-algebra.

Occasionally in this article we will consider more general group actions than $\Z$-actions. Thus a $G$-\strong{topological dynamical system (t.d.s.)} is a pair $(G,X)$ consisting of a (metrizable) topological group $G$ acting on a (metrizable) compact space
$X$. For $g\in G$ and $x\in X$ we denote the action both by $gx$ and $g.x$. We will need the following proposition:
\begin{prop}\label{prop:factor of ue}
  Let $G$ be an amenable group.  Let $(G,X)$ be uniquely ergodic and let $(G,X)\rightarrow (G,Y)$ be a topological factor map. Then $(G,Y)$ is uniquely ergodic.
\end{prop}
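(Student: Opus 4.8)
The plan is to prove Proposition \ref{prop:factor of ue} by reducing everything to the behaviour of ergodic averages of continuous functions, using the standard characterization of unique ergodicity: a $G$-t.d.s.\ $(G,X)$ with $G$ amenable is uniquely ergodic if and only if, for every $f\in C(X)$, the averages of $f$ along a (fixed) F\o lner sequence $\{F_N\}$ converge uniformly on $X$ to a constant (equivalently, to $\int f\,d\mu$ for the unique invariant measure $\mu$). So first I would fix a F\o lner sequence $F_N\subset G$ and recall that unique ergodicity of $(G,X)$ means that for each $g\in C(X)$ the averages $A_N g(x)=\frac{1}{|F_N|}\sum_{s\in F_N} g(sx)$ converge to the constant $\int g\,d\mu$ uniformly in $x$.

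Next I would transport this to the factor. Let $\pi:(G,X)\to (G,Y)$ be the given topological factor map and let $h\in C(Y)$ be arbitrary. Because $\pi$ is continuous and $G$-equivariant, $f:=h\circ\pi\in C(X)$ and satisfies $f(sx)=h(s\pi(x))$ for all $s\in G$, $x\in X$. Hence for every $x\in X$,
\begin{equation}\label{eq:transport}
A_N^Y h(\pi(x))=\frac{1}{|F_N|}\sum_{s\in F_N} h(s\pi(x))=\frac{1}{|F_N|}\sum_{s\in F_N} f(sx)=A_N^X f(x).
\end{equation}
By unique ergodicity of $(G,X)$, the right-hand side converges uniformly in $x\in X$ to the constant $c:=\int_X f\,d\mu$. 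Since $\pi$ is surjective, every $y\in Y$ is of the form $\pi(x)$, so the left-hand side $A_N^Y h(y)$ converges to the same constant $c$ uniformly in $y\in Y$. Applying the unique-ergodicity criterion in the reverse direction, this uniform convergence to a constant for every $h\in C(Y)$ forces $(G,Y)$ to be uniquely ergodic.

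The one point that genuinely uses amenability, and which I regard as the main thing to pin down rather than a serious obstacle, is the equivalence \textquotedbl uniquely ergodic $\iff$ uniform convergence of F\o lner averages of continuous functions to a constant.\textquotedbl{} For $\Z$-actions this is the classical Oxtoby criterion; for general amenable $G$ one invokes the mean ergodic theorem along a F\o lner sequence together with the fact that $\PM(X)$ is a weak-$*$ compact convex set whose extreme points are the ergodic measures, so that $|\PM(X)|=1$ is equivalent to every continuous function having constant F\o lner averages. I would cite this standard fact (e.g.\ from an amenable ergodic theory reference) rather than reprove it. Everything else in \eqref{eq:transport} is a one-line equivariance computation, and surjectivity of $\pi$ is exactly what is needed to push the uniform limit from $X$ down to all of $Y$; no new invariant measure can appear on $Y$ since any such measure would pair against some $h\in C(Y)$ to give a value different from $c$, contradicting the uniform convergence just established.
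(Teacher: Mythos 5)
Your argument is correct, but it takes a different route from the one the paper relies on. The paper does not prove the proposition at all: it simply points to Proposition 8.1 of the Angel--Kechris--Lyons paper, whose proof is a measure-lifting argument --- given any invariant $\nu\in\PM(Y)$, the set of Borel probability measures $\lambda$ on $X$ with $\pi_*\lambda=\nu$ is a nonempty weak-$*$ compact convex $G$-invariant set, so amenability (via the fixed-point property) produces an invariant $\lambda$ in it; unique ergodicity forces $\lambda=\mu$, hence $\nu=\pi_*\mu$ is unique. Your proof instead goes through the Oxtoby-type characterization of unique ergodicity by uniform convergence of F\o lner averages, transports the averages along $\pi$ by equivariance, and uses surjectivity to push the uniform limit down to $Y$. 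Both are sound. The lifting argument is shorter, needs no F\o lner sequence, and therefore applies verbatim to arbitrary amenable topological groups; your argument presupposes a F\o lner \emph{sequence}, which exists for countable (discrete) amenable groups but not for general amenable $G$ as literally stated in the proposition --- harmless here, since every group to which the paper applies the result (e.g.\ $\mathcal{HK}^{k}(T)$) is countable, but worth flagging. In compensation, your route gives slightly more: it shows directly that F\o lner averages of continuous functions on $Y$ converge uniformly to their integral, a quantitative statement the lifting argument does not produce. One small economy you could note: for the descent to $Y$ you only need pointwise (everywhere) convergence of $A_N^X(h\circ\pi)$ to the constant, since integrating the pointwise limit against an arbitrary invariant $\nu\in\PM(Y)$ via dominated convergence already pins down $\int h\,d\nu$; the uniformity is not essential to the conclusion.
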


\begin{proof}
See proof of Proposition 8.1 of \cite{angel2014random}.
\end{proof}

\subsection{Topological models.}
\begin{defn}\label{subsec:topological model}
Let $(X,\mathcal{X},\mu,T)$ be a m.p.s. We say that a t.d.s.  $(\hat{X},\hat{T})$
is a \textbf{topological model} for $(X,\mathcal{X},\mu,T)$ w.r.t. to a $\hat{T}-$invariant probability
measure $\hat{\mu}$ on $\hat{\mathcal{X}}$, the Borel $\sigma$-algebra of $X$,
if the system 
$(X,\mathcal{X},\mu,T)$ is isomorphic to $(\hat{X},\hat{\mathcal{X}},\hat{\mu},\hat{T})$ as m.p.s., that is, there exist a $T$-invariant Borel subset $C\subset X$ and  a $\hat{T}$-invariant Borel subset $\hat{C}\subset \hat{X}$ of full measure and a (bi)measurable and equivariant measure preserving bijective Borel map $p:C\rightarrow \hat{C}$. Notice that oftentimes in this article $(\hat{X},\hat{T})$ will be uniquely ergodic so that $\hat{\mu}$ will be the unique $\hat{T}-$invariant probability
measure of $X$.
\end{defn}

\begin{defn}\label{def:top model for map} \sloppy
Let $(X,\mathcal{X},\mu,T)$, $(Y,\Y, \nu, S)$ be m.p.s. Let  $(\hat{X},\hat{T})$, $(\hat{Y},\hat{S})$ be t.d.s. which are topological models of $(X,\mathcal{X},\mu,T)$ and $(Y,\Y, \nu, S)$ w.r.t. measures $\h{\mu}$ and $\h{\nu}$ as witnessed by maps $\phi$ and $\psi$ respectively. We say that $\h{\pi}: (\h{X},\h{T})\rightarrow (\h{Y},\h{S})$ is a \textbf{topological
model} for a factor map $\pi: (X,\X, \mu, T)\rightarrow (Y,\Y, \nu, S)$ if
$\h{\pi}$ is a topological factor and the following diagram
\[
\begin{CD}
X @>{\phi}>> \h{X}\\
@V{\pi}VV      @VV{\h{\pi}}V\\
Y @>{\psi }>> \h{Y}
\end{CD}
\]
is commutative, i.e. $\h{\pi}\phi=\psi\pi$
\end{defn}

\subsection{Conditional expectation.}\label{Subsec: Con exp}

Let  $(X,\mathcal{X},\mu)$ be a  probability space and let $\mathcal{B}$ be a sub-$\sigma$-algebra of $\mathcal{X}$. For $f\in L^1(\mu)$, the \strong{conditional expectation} of $f$ w.r.t. $\mathcal{B}$ is the unique function $\mathbb{E}(f|\mathcal{B})\in L^1(X,\mathcal{B},\mu)$ satisfying
\begin{equation}\label{eq:cond exp}
\int_Bfd\mu=\int_B\mathbb{E}(f|\mathcal{B})d\mu
\end{equation}
for every $B\in \mathcal{B} $. For $f\in L^1(\mu)$ and $g\in L^{\infty}(X,\mathcal{B},\mu)$, it holds (see \cite[Chapter 2, Section 2.4]{HK2018}):
\begin{equation}\label{conditional expection}
\int_Xfgd\mu=\int_X\mathbb{E}(f|\mathcal{B})gd\mu.
\end{equation}
Let $(X,\mathcal{X},\mu)$ and $(Y,\mathcal{Y},\nu)$ be probability spaces and let $\pi:X\rightarrow Y$ be a measurable map such that $\pi_{*}\mu=\nu$. Denote by $\mathbb{E}(f|Y)\in L^1(Y,\nu)$ the function such that $\mathbb{E}(f|Y)=\mathbb{E}(f|\pi^{-1}(\mathcal{Y}))\circ\pi^{-1}$. Note this is well-defined. Thus the difference between $\mathbb{E}(f|Y)$ and $\mathbb{E}(f|\pi^{-1}(\mathcal{Y}))$ is that the first function is considered as a function on $Y$ and the second as a function on $X$.

\subsection{Pronilsystems and nilsequences.}\label{subsec:Nilsystem-and-a system at most d}

\begin{defn}

A (real) \strong{Lie group} is a group that is also a finite dimensional
real smooth manifold such that the group operations of multiplication
and inversion are smooth. Let $G$ be a Lie group. Let $G_1=G$ and $G_{k}=[G_{k-1},G]$
for $k\geq 2$, where $[G,H]=\{[g,h]:g\in G,h\in H\}$ and $[g,h]=g^{-1}h^{-1}gh$.
If there exists some $d\geq1$ such that  $G_{d+1}=\{e\}$,
$G$ is called a \strong{$d$-step nilpotent} Lie group. We say that a discrete subgroup $\Gamma$ of a Lie group $G$ is 
\strong{cocompact} if $G/\Gamma$, endowed with the quotient topology,
is compact. We say that quotient $X=G/\Gamma$ is a \strong{$d$-step nilmanifold}
if $G$ is a $d$-step nilpotent Lie group and
$\Gamma$ is a discrete, cocompact subgroup. The nilmanifold $X$ admits a natural action by $G$ through \textbf{translations} $g.a\Gamma=ga\Gamma$, $g,a \in G$. The \textbf{Haar measure} of $X$ is the unique Borel probability measure on $X$ which is invariant under this action. A \strong{nilsystem of degree at most $d$}, $(X,T)$, is given by an $d$-step nilmanifold
$X=G/\Gamma$ and $T\in G$ with action $T.a\Gamma=Ta\Gamma$. When a nilsystem is considered as a m.p.s. it is always w.r.t. its Haar measure.
\end{defn}

\begin{defn}\label{def:pronilsystem}
A t.d.s.  (m.p.s) is called a topological (measurable) \strong{$d$-step pronilsystem} 
if it is a topological (measurable) inverse limit of nilsystems of degree at most $d$. By convention a $0$-step pronilsystem is the one-point trivial system.

\end{defn}

\begin{rem}\label{rem:meas and top pronil}
 \sloppy By \cite[p. 233]{HK2018} if an ergodic measurable $d$-step pronilsystem is presented as the inverse limit $(X,\X,\nu,T)=\underleftarrow{\lim}(X_{m},\X_m,\nu_m,T_m)$ given by the measurable factor maps  $\pi_m:(X_{m},\X_m,\nu_m,T_m)\rightarrow (X_{m-1},\X_{m-1},\nu_{m-1},T_{m-1})$ between nilsystems  of degree at most $d$ then there exist topological factor maps $\tilde{\pi}_m:(X_{m},T_m)\rightarrow (X_{m-1},T_{m-1})$ such that $\tilde{\pi}=\pi$ $\nu_m$-a.e. and so effectively one can consider  $(X,\X,\nu,T)$ as a (minimal) topological pronilsystem. Moreover any two $d$-step pronilsystem topological models of $(X,\X,\nu,T)$ are isomorphic as t.d.s. (Theorem \ref{thm: meas coalescence}).

\end{rem}

\begin{defn}(\cite[Definition 2.2]{HKM10})\label{def:nilsequence}
A bounded sequence $\{a(n)\}_{n\in \mathbb{Z}}$ is called a \strong{$d$-step nilsequence} if there exists a $d$-step pronilsystem $(X,T)$, $x_0\in X$ and a continuous function $f\in C(X)$ such that $a(n)=f(T^nx_0)$ for $n\in \mathbb{Z}$. 
\end{defn}

\begin{thm}(\cite[Theorem 3.1]{HK09})\label{thm:basic properties for nilsystems}
Let $(X,T)$  be a nilsystem. Then
$(X,T)$ is uniquely ergodic if and only if $(X,T)$ is ergodic w.r.t. the Haar measure if and only if $(X,T)$ is minimal.
\end{thm}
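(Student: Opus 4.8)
The plan is to prove the two nontrivial implications by induction on the nilpotency degree $d$ of the nilmanifold $X=G/\Gamma$, using the tower of torus extensions carried by any nilsystem. Write $T=T_a$ for the translation by $a\in G$. A standard reduction lets me assume that $G$ is generated by its identity component $G^0$ together with $a$; this costs nothing, since for the three properties in question one may replace $X$ by the orbit closure of the base point, which is a sub-nilmanifold of this form. The implication ``uniquely ergodic $\Rightarrow$ ergodic'' is immediate: the Haar measure is $T$-invariant, so unique ergodicity forces it to be the only invariant measure and in particular ergodic. It therefore suffices to prove ``ergodic $\Rightarrow$ minimal'' and ``minimal $\Rightarrow$ uniquely ergodic''.

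For the base case $d=1$ the space $X$ is a compact connected abelian group and $T$ is a rotation; here minimality, ergodicity, and unique ergodicity are all classically equivalent to the density of $\{a^n\}_{n\in\Z}$ in $G$ (Kronecker--Weyl, via the Pontryagin dual). For the inductive step I use the factor map $\pi\colon G/\Gamma\to G/(G_d\Gamma)$ onto a $(d-1)$-step nilsystem $(\bar X,\bar T)$; this map is a topological group extension by the compact abelian group $K=G_d/(G_d\cap\Gamma)$, since $G_d$ is central in $G$. Both minimality and ergodicity pass to $(\bar X,\bar T)$, so by the inductive hypothesis $(\bar X,\bar T)$ is minimal and uniquely ergodic in either case. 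The whole theorem is thereby reduced to a single statement about extensions: if $\pi\colon(X,T)\to(Y,S)$ is a group extension by a compact abelian group $K$ over a minimal and uniquely ergodic base, then ergodicity, minimality, and unique ergodicity of $(X,T)$ are equivalent.

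To prove this reduced statement I write the dynamics as $T(y,k)=(Sy,\sigma(y)+k)$ for a continuous cocycle $\sigma\colon Y\to K$ and decompose $L^2(X)=\bigoplus_{\chi\in\hat K}L^2_\chi$ along the fibers. A $T$-invariant function in the $\chi$-isotypic component amounts to a solution of $f\circ S=\overline{(\chi\circ\sigma)}\,f$, so $(X,T)$ fails to be ergodic exactly when some nontrivial $\chi$ makes $\chi\circ\sigma$ a measurable coboundary, while $(X,T)$ fails to be minimal exactly when some nontrivial $\chi$ makes it a continuous coboundary. The heart of the matter, and the step I expect to be the main obstacle, is the bridge between the two: over a uniquely ergodic base any measurable solution of $f\circ S=c\,f$ with continuous unimodular $c=\overline{\chi\circ\sigma}$ admits a continuous version. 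This is a Gottschalk--Hedlund-type result, proved by using unique ergodicity of $(Y,S)$ to obtain uniform (rather than merely $L^2$) control of the Birkhoff sums governing the coboundary equation; it is precisely the place where unique ergodicity of the base cannot be removed. Consequently ergodicity and minimality of $(X,T)$ coincide.

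It remains to upgrade these to unique ergodicity. Once $(X,T)$ is ergodic the character computation shows that the product measure $\nu_Y\times m_K$ (with $m_K$ Haar on $K$) is $T$-invariant and ergodic. Any $T$-invariant probability measure on $X$ projects to the unique invariant measure $\nu_Y$ of $(Y,S)$; averaging such a lift over the fiber action of $K$ against $m_K$ yields a $K$-invariant lift, which disintegrates as $\nu_Y\times m_K$. Since $\nu_Y\times m_K$ is ergodic, hence an extreme point of the set of invariant measures, it must agree with $m_K$-almost every $K$-translate of the original lift, and therefore with the lift itself. Thus $(X,T)$ is uniquely ergodic, closing the cycle ergodic $\Rightarrow$ minimal $\Rightarrow$ uniquely ergodic $\Rightarrow$ ergodic and completing the induction.
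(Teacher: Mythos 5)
The paper offers no proof of this statement---it is quoted from \cite[Theorem 3.1]{HK09} (the result goes back to Auslander--Green--Hahn, Green and Parry)---so your argument must be judged on its own terms, and it contains a genuine gap. The skeleton is sound: the induction on the step, the central extension $G/\Gamma\to G/(G_d\Gamma)$ by the compact abelian group $K=G_d/(G_d\cap\Gamma)$, the Fourier decomposition of $L^2$ along the fibers, and the averaging argument upgrading ergodicity to unique ergodicity are all standard and correct. The fatal step is the ``bridge'': the claim that over a uniquely ergodic minimal base any measurable solution of $f\circ S=c\,f$ with $c$ continuous and unimodular has a continuous version. This is false for a general uniquely ergodic base. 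Already for constant $c=\lambda$ it asserts that every measurable eigenfunction of a strictly ergodic system has a continuous version, which is precisely the property \cfnone\ studied in this paper and which the paper's own remark (the topologically mixing strictly ergodic model of an irrational rotation) shows can fail. Worse, the ``reduced statement'' you isolate---that for a topological group extension by a compact abelian group over a minimal uniquely ergodic base, ergodicity, minimality and unique ergodicity of the total space are equivalent---is simply not true: Furstenberg's classical example of a minimal but non-uniquely-ergodic (and non-ergodic) continuous skew product $(x,y)\mapsto(x+\alpha,y+\phi(x))$ on $\mathbb{T}^{2}$ is a group extension by $\mathbb{T}$ of the uniquely ergodic rotation $R_\alpha$ and directly refutes ``minimal $\Rightarrow$ ergodic''. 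Gottschalk--Hedlund requires bounded Birkhoff \emph{sums} of the cocycle along an orbit; unique ergodicity of the base only gives uniform convergence of Birkhoff \emph{averages}, and a measurable coboundary need not have bounded sums.

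To be precise about what survives: ``uniquely ergodic $\Rightarrow$ ergodic'' is trivial, ``non-minimal $\Rightarrow$ non-ergodic'' holds (a continuous coboundary is in particular a measurable one), and your averaging argument correctly gives ``ergodic $\Rightarrow$ uniquely ergodic''; the missing implication is exactly ``minimal $\Rightarrow$ ergodic''. For nilsystems this implication is true, but its proof cannot discard the algebraic nature of the cocycle: one must use that $\sigma$ comes from the nilpotent group structure, e.g.\ by reducing all three properties to the induced rotation on the compact abelianization $G/\overline{G_{2}\Gamma}$ via Green's theorem and Weyl-type equidistribution of polynomial sequences, as in \cite[Chapter 11]{HK2018}. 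Your reduction to an arbitrary continuous compact abelian group extension is exactly the point at which the content of the theorem is lost.
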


The following proposition is an immediate corollary of the previous theorem.

\begin{prop}\label{prop:basic properties for pronilsystems}
Let $(X,T)$ be a pronilsystem. Then
$(X,T)$ is uniquely ergodic if and only if $(X,T)$ is minimal.
\end{prop}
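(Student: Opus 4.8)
The plan is to reduce the statement to the corresponding fact for a single nilsystem, namely Theorem~\ref{thm:basic properties for nilsystems}, by showing that both minimality and unique ergodicity of the inverse limit $(X,T)=\underleftarrow{\lim}(X_m,T_m)$ are equivalent to the same property holding simultaneously for \emph{every} nilsystem factor $(X_m,T_m)$. Concretely, I would establish two auxiliary equivalences: (A) $(X,T)$ is minimal if and only if every $(X_m,T_m)$ is minimal; and (B) $(X,T)$ is uniquely ergodic if and only if every $(X_m,T_m)$ is uniquely ergodic. Granting these, the proposition follows by chaining the equivalences: $(X,T)$ is minimal $\iff$ all $(X_m,T_m)$ are minimal $\iff$ (Theorem~\ref{thm:basic properties for nilsystems}) all $(X_m,T_m)$ are uniquely ergodic $\iff$ $(X,T)$ is uniquely ergodic.

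For the \emph{forward} halves of (A) and (B), each $(X_m,T_m)$ is a topological factor of $(X,T)$ via the canonical projection $\pi_m$: a factor of a minimal system is minimal, and a factor of a uniquely ergodic $\mathbb{Z}$-system is uniquely ergodic by Proposition~\ref{prop:factor of ue} (since $\mathbb{Z}$ is amenable). So these directions are immediate.

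The substance lies in the two \emph{reverse} implications. For (B), suppose every $(X_m,T_m)$ is uniquely ergodic and let $\mu,\nu\in\PM(X)$. Pushing forward, $(\pi_m)_*\mu$ and $(\pi_m)_*\nu$ are both $T_m$-invariant, hence coincide for every $m$; thus $\mu$ and $\nu$ agree on every set of the form $\pi_m^{-1}(B)$ with $B$ Borel in $X_m$. Since the connecting maps give $\pi_m^{-1}(\mathcal{B}(X_m))\subseteq\pi_{m+1}^{-1}(\mathcal{B}(X_{m+1}))$, these sets form an algebra generating the Borel $\sigma$-algebra of $X$, so a standard Dynkin (monotone-class) argument yields $\mu=\nu$. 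For (A), suppose every $(X_m,T_m)$ is minimal and fix $x=(x_m)\in X$. The orbit closure $\OC(x)$ is closed and $T$-invariant, and $\pi_m(\OC(x))$ is a closed invariant subset of $X_m$ containing the orbit of $x_m$, hence equals $X_m$ by minimality. Given any $z=(z_m)\in X$, for each $m$ choose $y^{(m)}\in\OC(x)$ with $\pi_m(y^{(m)})=z_m$; by inverse-limit consistency $y^{(m)}$ then agrees with $z$ on all coordinates up to index $m$, so $y^{(m)}\to z$ in the product topology, and closedness forces $z\in\OC(x)$. Hence $\OC(x)=X$ and $(X,T)$ is minimal.

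I expect the only point requiring real care to be the reverse implication in (A): the assertion that a closed invariant set surjecting onto every factor must be the whole inverse limit. This rests on the fact that in an inverse limit, agreement on an initial segment of coordinates forces proximity in the product topology, which is exactly what makes the approximating sequence $y^{(m)}$ converge to the target point $z$; everything else is routine.
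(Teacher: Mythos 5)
Your proof is correct and follows the same route the paper intends: the paper simply declares the proposition an immediate corollary of Theorem~\ref{thm:basic properties for nilsystems}, and the content you supply (minimality and unique ergodicity each pass between an inverse limit and the totality of its nilsystem factors) is exactly the standard permanence argument that makes it immediate. Both the Dynkin-class step for measures and the coordinate-approximation step for orbit closures are sound as written.
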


\begin{defn}
Let $(X,\mu,T)$ be a strictly ergodic t.d.s. We say that a t.d.s. $(Y,T)$ is a \textbf{topological $k$-step pronilfactor} of $(X,T)$ if it is a topological factor of $(X,T)$ and if it is isomorphic to a $k$-step pronilsystem as a t.d.s. We say that a m.p.s. $(Y,\mathcal{Y},\nu,T)$ is a \textbf{measurable $k$-step pronilfactor} of $(X,T)$ if it is a measurable factor of $(X,\mathcal{X},\mu,T)$ and if it is isomorphic to a $k$-step pronilsystem as a m.p.s. 
\end{defn}

\subsection{Host-Kra structure theory machinery.}\label{subsec:Definition-of-the mu k}

 By a \strong{face} of the discrete cube $\{0,1\}^{k}$ we mean a
subcube obtained by fixing some subset of the coordinates. For $k\in \mathbb{N}$, 
let $[k]=\{0,1\}^k$. Thus $X^{[k]}=X\times\cdots\times X$, $2^k$ times and similarly $T^{[k]}=T\times\cdots\times T$, $2^k$ times. For $x\in X$, $x^{[k]}=(x,\ldots, x)\in X^{[k]}$.
Let $[k]_*=\{0,1\}^k\setminus \{\vec{0}\}$ and define $X_{*}^{[k]}=X^{[k]_*}$.

 \begin{defn} (\cite{HK05}) \label{def:HK cube group and face group}
Let $(X, \mathcal{X},\mu,T)$ be an ergodic m.p.s. For $1\leq j\leq k$, let $\overline{\alpha}_{j}=\{v\in\{0,1\}^{k}:v_j=1\}$ be
the \textbf{$j$-th upper face} of $\{0,1\}^{k}$. For any face $F\subset \{0,1\}^k$, define $$(T^{F})_v=\begin{cases} T & v\in F\\
\id & v\notin F.
\end{cases}$$
Define the \strong{face group}   $\mathcal{F}^k(T)\subset \Homeo(X^{[k]})$ to be the group generated by the elements $\{T^{\overline{\alpha}_{j}}:1\leq j\leq k\}$. Define the the $k$-th \strong{Host-Kra cube group} $\mathcal{HK}^k(T)$ to be the subgroup of $\Homeo(X^{[k]})$ generated by $\mathcal{F}^k(T)$ and $T^{[k]}$.
\end{defn}
\begin{defn} (\cite{HK05}) \label{def: muk definition} Let $(X,\mathcal{B},\mu,T)$ be an ergodic m.p.s. Let $\mu^{[1]}=\mu\times \mu$. For $k\in \mathbb{N}$, let $\mathcal{I}_{T^{[k]}}$ be the $T^{[k]}$-invariant
$\sigma$-algebra of $(X^{[k]},\mathcal{X}^{[k]},\mu^{[k]})$.
Define $\mu^{[k+1]}$ to be the relative independent
joining of two copies of $\mu^{[k]}$ over $\mathcal{I}_{T^{[k]}}$.
That is, for $f_{v}\in L^{\infty}(\mu)$, $v\in\{0,1\}^{k+1}$:
\begin{multline*}
\int_{X^{[k+1]}}\prod_{v\in\{0,1\}^{k+1}}f_{v}(x_v)d\mu^{[k+1]}(x)=\\\int_{X^{[k]}}\mathbb{{E}}(\prod_{v\in\{0,1\}^{k}}f_{v0}|\mathcal{I}_{T^{[k]}})(x)\mathbb{{E}}(\prod_{v\in\{0,1\}^{k}}f_{v1}|\mathcal{I}_{T^{[k]}})(x)d\mu^{[k]}(x)
.
\end{multline*}
In particular, from Equation \eqref{conditional expection}, it follows that for all measurable functions $H_1,H_2\in L^\infty(X^{[k]},\mu^{[k]})$,
\begin{multline}\label{def muk 2}\int_{X^{[k]}}\mathbb{{E}}(H_1|\mathcal{I}_{T^{[k]}})(c)\mathbb{{E}}(H_2|\mathcal{I}_{T^{[k]}})(c)d\mu^{[k]}(c)=\\ \int_{X^{[k]}}\mathbb{{E}}(H_1|\mathcal{I}_{T^{[k]}})(c)H_2(c)d\mu^{[k]}(c).\end{multline}
Note $\mu^{[k]}$ is  $\mathcal{HK}^{k}(T)$-invariant (\cite[Chapter 9, Proposition 2]{HK2018}).
\end{defn}

\begin{defn}
\cite[Chapter 9, Section 1]{HK2018}\label{def:J} For $k\in \mathbb{N}$, let $\mathcal{J}_{*}^{k}$ be the $\sigma$-algebras of sets invariant
under $\mathcal{F}^{k}(T)$ on $X_{*}^{[k]}$.
\end{defn}

\begin{defn}
\label{def:Z_k} 
\cite[Subsection 9.1]{HK2018} Let $(X,\mathcal{X},\mu,T)$ be an ergodic m.p.s. For $k\in \mathbb{N}$, define $\mathcal{Z}_{k}(X)$ to be the $\sigma$-algebra consisting of measurable sets $B$ such that there exists a $\mathcal{J}_{*}^{k+1}$-measurable
set $A\subset X_*^{[k+1]}$ so that
up to $\mu^{[k+1]}$- measure zero it holds:
\begin{equation*}
X\times A=B\times X_{*}^{[k+1]}\label{eq:measurable unique completeness}
\end{equation*}
Define the \textbf{$k$-th Host-Kra factor} $Z_k(X)$  as the measurable factor of $X$ induced by $\mathcal{Z}_{k}(X)$ and denote by $\pi_k:X\rightarrow Z_{k}(X)$ the \textbf{(measurable) canonical $k$-th projection}. Let $\mu_k$ be the projection of $\mu$ w.r.t. $\pi_k$.
\end{defn}

\begin{defn}
\label{def:The-seminorms}  Let $(X,\mathcal{X},\mu,T)$ be an m.p.s. and $k\in \mathbb{N}$. The \textbf{Host-Kra-Gowers seminorms} on $L^{\infty}(\mu)$ are defined as follows:

\[
|||f|||_{k}=(\int\prod_{v\in\{0,1\}^{k}}\mathcal{C}^{|v|}fd\mu^{[k]})^{1/2^{k}},
\]
where $|(v_1,\ldots,v_{k+1})|=\Sigma_{i=1}^{k+1} v_i$ and $\mathcal{C}^{n}z=z$ if
$n$ is even and $\mathcal{C}^{n}z=\overline{z}$ if
$n$ is odd. By \cite[Subsection 8.3]{HK2018},
$|||\cdot|||_{k}$ is a seminorm.
\end{defn}

\begin{lem}\cite[Chapter 9, Theorem 7]{HK2018}\label{lem: seminorm and k factor} Let $(X,\mathcal{X},\mu,T)$
be an ergodic m.p.s. and $k\in\mathbb{N}$. Then for $f\in L^{\infty}(\mu)$,
$|||f|||_{k+1}=0$ if and only if $\mathbb{E}(f|\mathcal{Z}_{k}(X))=0$.
\end{lem}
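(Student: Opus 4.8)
The plan is to first turn the seminorm into the $L^2$-norm of a conditional expectation using the defining joining, and then to bridge the resulting ``analytic'' object (the invariant $\sigma$-algebra $\mathcal{I}_{T^{[k]}}$) to the ``structural'' factor $\mathcal{Z}_k(X)$ of Definition \ref{def:Z_k} by means of dual functions together with the Gowers--Cauchy--Schwarz inequality. Throughout I treat $f\in L^\infty(\mu)$ as complex-valued, since the seminorm in Definition \ref{def:The-seminorms} carries the conjugation operators $\mathcal{C}^{|v|}$.

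First I would expand the seminorm. Grouping the $2^{k+1}$ coordinates of $X^{[k+1]}$ into the pairs $\{w0,w1\}$ indexed by $w\in\{0,1\}^{k}$, one has $\mathcal{C}^{|w0|}f=\mathcal{C}^{|w|}f$ and $\mathcal{C}^{|w1|}f=\mathcal{C}^{|w|+1}f=\overline{\mathcal{C}^{|w|}f}$. Writing $H_f(x)=\prod_{w\in\{0,1\}^{k}}\mathcal{C}^{|w|}f(x_w)$ and inserting the description of $\mu^{[k+1]}$ from Definition \ref{def: muk definition} as the relative independent joining of two copies of $\mu^{[k]}$ over $\mathcal{I}_{T^{[k]}}$, the two factors of the joining become $\mathbb{E}(H_f\mid\mathcal{I}_{T^{[k]}})$ and $\mathbb{E}(\overline{H_f}\mid\mathcal{I}_{T^{[k]}})=\overline{\mathbb{E}(H_f\mid\mathcal{I}_{T^{[k]}})}$, so the defining integral collapses to
\[
|||f|||_{k+1}^{2^{k+1}}=\int_{X^{[k]}}\big|\mathbb{E}(H_f\mid\mathcal{I}_{T^{[k]}})\big|^{2}\,d\mu^{[k]}.
\]
Hence $|||f|||_{k+1}=0$ if and only if $\mathbb{E}(H_f\mid\mathcal{I}_{T^{[k]}})=0$, and the whole problem is transferred to deciding when this conditional expectation vanishes.

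Next I would introduce the dual function $\mathcal{D}f(y)=\mathbb{E}_{\mu^{[k+1]}}\big(\prod_{v\in[k+1]_*}\mathcal{C}^{|v|}f(x_v)\bigm| x_{\vec{0}}=y\big)$, obtained by integrating out the starred coordinates conditionally on the coordinate $\vec{0}$. Since the $\vec{0}$-marginal of $\mu^{[k+1]}$ is $\mu$ and the term at $\vec{0}$ is $\mathcal{C}^{0}f=f$, the defining integral of the seminorm reads $|||f|||_{k+1}^{2^{k+1}}=\int_X f\,\mathcal{D}f\,d\mu$. The crucial structural input is that $\mathcal{D}f$ is $\mathcal{Z}_k(X)$-measurable (and, more strongly, that these dual functions generate $\mathcal{Z}_k(X)$): this is exactly where the face-group invariance of $\mathcal{J}_*^{k+1}$ entering Definition \ref{def:Z_k} gets matched to the integrand over $X_*^{[k+1]}$. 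Granting this, the direction $\mathbb{E}(f\mid\mathcal{Z}_k(X))=0\Rightarrow|||f|||_{k+1}=0$ follows at once from \eqref{conditional expection}: $\int_X f\,\mathcal{D}f\,d\mu=\int_X \mathbb{E}(f\mid\mathcal{Z}_k(X))\,\mathcal{D}f\,d\mu=0$. For the converse I would run the argument in reverse: if $|||f|||_{k+1}=0$, then the Gowers--Cauchy--Schwarz inequality applied with the $\vec{0}$-slot equal to $f$ and the remaining slots equal to arbitrary $g_v\in L^\infty(\mu)$ gives $\int_X f\cdot\mathbb{E}_{\mu^{[k+1]}}\big(\prod_{v\neq\vec0}\mathcal{C}^{|v|}g_v(x_v)\bigm|x_{\vec0}=\cdot\big)\,d\mu=0$; thus $f$ is orthogonal to every generalized dual function, and since these span a dense subspace of $L^2(\mathcal{Z}_k(X))$ we conclude $\mathbb{E}(f\mid\mathcal{Z}_k(X))=0$.

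I expect the main obstacle to be precisely the structural identification invoked in the third paragraph, namely that $\mathcal{Z}_k(X)$, defined through the $\mathcal{F}^{k+1}(T)$-invariant $\sigma$-algebra $\mathcal{J}_*^{k+1}$ of Definition \ref{def:J}, coincides with the $\sigma$-algebra generated by the dual functions. This is the heart of the Host--Kra machinery: one must reconcile the full $T^{[k]}$-invariance that produces $\mathcal{I}_{T^{[k]}}$ in the seminorm identity with the partial face-group invariance defining $\mathcal{Z}_k(X)$. The natural way to carry this out is by induction on $k$, using the symmetries of the cube $\{0,1\}^{k+1}$ together with the ergodic decomposition of $\mu^{[k]}$ over $\mathcal{I}_{T^{[k]}}$ to relate the invariant data on $X^{[k]}$ to the factors $\mathcal{Z}_{k-1}(X)$ attached to the individual coordinates, the base case $k=1$ reducing to the classical identification of the Kronecker factor.
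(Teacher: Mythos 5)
First, note that the paper does not prove this lemma at all: it is quoted verbatim from \cite[Chapter 9, Theorem 7]{HK2018}, so there is no in-paper argument to compare against; your proposal has to be measured against the original Host--Kra proof, whose skeleton you have in fact reproduced. Your opening reductions are correct: pairing the coordinates $w0,w1$ and invoking Definition \ref{def: muk definition} does give $|||f|||_{k+1}^{2^{k+1}}=\int|\mathbb{E}(H_f\mid\mathcal{I}_{T^{[k]}})|^{2}\,d\mu^{[k]}$, and the identity $|||f|||_{k+1}^{2^{k+1}}=\int f\,\mathcal{D}f\,d\mu$ follows since the $\vec 0$-marginal of $\mu^{[k+1]}$ is $\mu$. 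So far this is the standard route.

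The genuine gap is that everything then rests on the assertion that the dual functions $\mathcal{D}f$ (and their generalized versions) are $\mathcal{Z}_k(X)$-measurable and span a dense subspace of $L^2(\mathcal{Z}_k(X))$ --- and, given how the paper defines $\mathcal{Z}_k(X)$ (Definition \ref{def:Z_k}, via the $\mathcal{F}^{k+1}(T)$-invariant $\sigma$-algebra $\mathcal{J}_*^{k+1}$ on $X_*^{[k+1]}$), this assertion \emph{is} the lemma, not a tool for proving it. Concretely, what must be established is the transfer principle between the two sides of the cube: that $\mathbb{E}(f(x_{\vec 0})\mid X_*^{[k+1]})$ is $\mathcal{J}_*^{k+1}$-measurable (this half is easy, since the upper faces avoid $\vec 0$ and hence $\mathcal{F}^{k+1}(T)$ fixes the $\vec 0$-coordinate), and conversely that every $\mathcal{J}_*^{k+1}$-measurable set agrees $\mu^{[k+1]}$-a.e.\ with a set depending only on $x_{\vec 0}$; both directions of the lemma ($\mathbb{E}(f\mid\mathcal{Z}_k)=0\Rightarrow|||f|||_{k+1}=0$ and its converse) hinge on this second, nontrivial half, which your sketch defers to an unexecuted induction. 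In addition, the Gowers--Cauchy--Schwarz inequality you invoke for the converse direction is itself not free: it requires the invariance of $\mu^{[k+1]}$ under the digit permutations and side reflections of the cube, which must be proved before the iterated Cauchy--Schwarz argument can be run. As it stands, the proposal is a faithful roadmap of the Host--Kra argument with the two load-bearing lemmas left as placeholders, so it does not yet constitute a proof.
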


\subsection{Maximal measurable  pronilfactors.}\label{subsec:classical meas max}
\begin{defn}
Let $k\in \mathbb{N}$. A m.p.s. 
$(X,\mathcal{X},\mu,T)$ is called a \strong{(measurable) system of order $k$} if it is isomorphic to $(Z_k(X),\mathcal{Z}_k(X),\mu_k,T)$.
\end{defn}

\begin{thm}\label{thm:meas order k}(\cite[Theorem 10.1]{HK05}, \cite[Chapter 16, Theorem 1]{HK2018}, for an alternative proof see \cite[Theorem 5.3]{GL2019})
An ergodic m.p.s. is a system of order $k$ iff it is isomorphic to a minimal $k$-step pronilsystem as m.p.s.
\end{thm}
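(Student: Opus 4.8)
The plan is to prove the two implications separately, with the forward direction (system of order $k$ $\Rightarrow$ pronilsystem) being by far the harder one.

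For the easier direction, suppose $(X,\mathcal{X},\mu,T)$ is isomorphic to a minimal $k$-step pronilsystem; I must show it is a system of order $k$, i.e. that the canonical projection $\pi_k:X\rightarrow Z_k(X)$ is an isomorphism. By Lemma \ref{lem: seminorm and k factor} this is equivalent to showing that $|||\cdot|||_{k+1}$ is a genuine norm on $L^\infty(\mu)$, so that $\mathcal{Z}_k(X)=\mathcal{X}$ modulo $\mu$. Since both the seminorms and the Host-Kra factors are compatible with inverse limits, it suffices to treat a single nilsystem $X=G/\Gamma$ with $G_{k+1}=\{e\}$. Here I would use the explicit description of the cube measure $\mu^{[k+1]}$ on the nilmanifold: because $G$ is $k$-step nilpotent, on the support of $\mu^{[k+1]}$ the vertex indexed by $\vec{1}$ is almost surely a measurable function of the remaining $2^{k+1}-1$ vertices (a ``corner-closing'' relation coming precisely from $G_{k+1}=\{e\}$). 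A direct computation of $|||f|||_{k+1}$ using this relation then forces $|||f|||_{k+1}=0\Rightarrow f\equiv 0$, which is the desired norm property.

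For the harder direction I would argue by induction on $k$, the case $k=0$ being trivial. Assume every system of order $k-1$ is an inverse limit of $(k-1)$-step nilsystems, and let $(X,\mathcal{X},\mu,T)$ be a system of order $k$. The central structural input from Host-Kra theory is that the extension $Z_k(X)\rightarrow Z_{k-1}(X)$ is an isometric extension by a compact abelian group, whose cocycle satisfies a cohomological equation of Conze-Lesigne type adapted to degree $k$. Writing the pronilsystem $Z_{k-1}(X)$ as an inverse limit of nilsystems $N_m$ of degree $\leq k-1$, I would pull the abelian extension back over each $N_m$ and exploit the cocycle equation to manufacture a $k$-step nilpotent group acting on the extended space, thereby producing an approximating nilsystem of degree $\leq k$. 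Taking the inverse limit over $m$ (and over successively finer quotients of the structure group) then recovers $X$ as a $k$-step pronilsystem.

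The main obstacle is exactly this last construction: converting the abstract abelian group extension together with its degree-$k$ cocycle equation into a genuine nilmanifold presentation. This requires assembling the nilpotent Lie group $G$ (and a cocompact lattice $\Gamma$) out of the structure group of the extension and the commuting dynamics, and then verifying both that the resulting translation system has $Z_{k-1}$ as its correct factor and that it is of degree $\leq k$. This is the technical heart of the Host-Kra structure theorem. To carry it out I would either follow the inductive cocycle analysis of the original argument, or instead adopt the cubespace approach of \cite[Theorem 5.3]{GL2019}, which builds the relevant nilpotent group directly from the dynamical cubespaces and may interface more cleanly with the topological framework used elsewhere in this paper.
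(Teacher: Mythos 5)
This theorem is not proved in the paper at all: it is imported verbatim from the literature (Host--Kra's structure theorem, \cite[Theorem 10.1]{HK05}, \cite[Chapter 16, Theorem 1]{HK2018}, with the alternative cubespace proof in \cite[Theorem 5.3]{GL2019}), so there is no internal argument to compare yours against. Judged on its own terms, your outline correctly reproduces the architecture of the known proof: the easy direction does reduce, via Lemma \ref{lem: seminorm and k factor}, to showing that $|||\cdot|||_{k+1}$ is a norm on a $k$-step nilsystem, and the corner-closing property of the cube measure (the vertex at $\vec{1}$ being a.s.\ determined by the other $2^{k+1}-1$ vertices when $G_{k+1}=\{e\}$) is the right mechanism; the hard direction is indeed an induction on $k$ driven by the fact that $Z_k\rightarrow Z_{k-1}$ is an abelian group extension whose cocycle satisfies a Conze--Lesigne-type functional equation.

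The genuine gap is the one you name yourself: converting the abstract abelian extension plus its degree-$k$ cocycle equation into an actual nilmanifold presentation --- assembling the nilpotent Lie group $G$, the cocompact lattice $\Gamma$, and verifying that the resulting translation system is of degree $\leq k$ with the correct $(k-1)$-step factor. That construction is not a technical afterthought; it \emph{is} the theorem, and it occupies the bulk of \cite{HK05} (and, in the alternative route, of \cite{GL2019}). A proof that defers exactly this step has established only the statement's shell. Two smaller points also deserve care if you were to flesh this out: (i) the reduction of the easy direction to a single nilsystem requires checking that both the seminorms and the factors $\mathcal{Z}_k$ pass to inverse limits in the right direction (they do, but the argument is not one line, since an inverse limit of systems of order $k$ must be shown to be of order $k$); and (ii) in the hard direction the extension $Z_k\rightarrow Z_{k-1}$ being a compact abelian group extension is itself a nontrivial output of the Host--Kra analysis of the measures $\mu^{[k]}$, not a free hypothesis, so your induction silently relies on a second substantial theorem. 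As a roadmap your proposal is faithful to the standard proof; as a proof it is incomplete precisely where the difficulty is concentrated.
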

\begin{rem}\label{rem:classical Z}
Let $(X,\mathcal{X},\mu,T)$ be an ergodic m.p.s.
 In the literature $(Z_k(X),\mathcal{Z}_k(X),\mu_k,T)$ is referred to as the \textbf{maximal measurable $k$-step pronilfactor} or as the \textit{maximal factor which is a system of order $k$} (see \cite[Chapter 9, Theorem 18]{HK2018}). By this it is meant that any measurable factor map $\phi:(X,\mathcal{X},\mu,T)\rightarrow (Y,\mathcal{Y},\nu,S)$ where $(Y,\mathcal{Y},\nu,S)$ is a minimal $k$-step pronilsystem, factors through the canonical $k$-th projection $\pi_k:(X,\mathcal{X},\mu,T)\rightarrow (Z_k(X),\mathcal{Z}_k(X),\mu_k,T)$, i.e., there exists a unique (up to measure zero) $\psi:(Z_k(X),\mathcal{Z}_k(X),\mu_k,T)\rightarrow  (Y,\mathcal{Y},\nu,S)$ such that $\phi=\psi\circ \pi_k$ a.s. In section \ref{uniqueness for Z and W} we establish the complementary property of \textit{universality} for $(Z_k(X),\mathcal{Z}_k(X),\mu_k,T)$.
\end{rem}

\begin{rem}
In \cite[Corollary 2.2]{HKM2014} a criterion for an ergodic m.p.s. $(X,\mathcal{X},\mu,T)$ to have $Z_k(X)=Z_1(X)$ for all $k\geq 1$ is given. Indeed this is the case for ergodic systems whose spectrum does not admit a
Lebesgue component with infinite multiplicity. In particular this holds true for weakly mixing systems, systems with singular maximal spectral type and systems with finite spectral multiplicity. 
\end{rem}
\subsection{Maximal topological  pronilfactors.}\label{subsec:classical top max}

Recall the Definition of $\mathcal{HK}^{k}(T)$ and $\mathcal{F}^k(T)$ (Definition \ref{def:HK cube group and face group}).
\begin{defn}
\label{defn of cubesystem} Let $(X,T)$ be a minimal t.d.s. Define the \textbf{induced $(k+1)$-th dynamical cubespace} by: $$\C_{\T}^{k+1}(X)=\overline{\{gx^{[k+1]}|\,g\in\mathcal{HK}^{k+1}(T)\}}.$$
\end{defn}

\begin{defn}(\cite[Definition 3.2]{HKM10})\label{defn for RP}
Let $(X,T)$ be a topological dynamical system and $k\geq 1$. The points $x,y\in X$ are said to be \strong{regionally proximal of order $k$}, denoted $(x,y)\in \RP^{[k]}(X)$, if there are sequences of elements $f_i\in \mathcal{F}^k(T)$, $x_i,y_i\in X$,  $a_*\in X_{*}^{[k]}$ such that 
$$\lim_{i\rightarrow \infty}(f_i x_i^{[k]},f_i y_i^{[k]})=(x,a_*,y,a_*).$$
\end{defn}

\begin{thm}\label{thm:rpk is eq rel}(\cite[Theorem 3.5]{SY12}\footnote{This theorem was generalized to arbitrary minimal group actions in \cite[Theorem 3.8]{GGY2018}.})
Let $(X,T)$ be a minimal t.d.s. and $k\geq 1$. Then $\RP^{[k]}(X)$ is a closed $T$-invariant equivalence relation.
\end{thm}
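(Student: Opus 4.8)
The plan is first to dispatch the elementary closure properties and then to concentrate all the work on transitivity, which is the real content.

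First I would verify that $\RP^{[k]}(X)$ is reflexive, symmetric, closed, and $T$-invariant directly from Definition \ref{defn for RP}. For reflexivity, take $f_i=\id$, $x_i=y_i=x$ and $a_*=x^{[k]_*}$, so that both coordinates converge to $(x,a_*)$. Symmetry is immediate upon interchanging the roles of $x_i$ and $y_i$. Closedness follows from a diagonal argument: if $(x^{(n)},y^{(n)})\in\RP^{[k]}(X)$ converges to $(x,y)$, compactness of $X_*^{[k]}$ lets me pass to a convergent subsequence of the witnessing side-coordinates $a_*^{(n)}$, and then choosing for each $n$ a sufficiently advanced index in the $n$-th witnessing sequence produces a single sequence witnessing $(x,y)$. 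For $T$-invariance I would use that $T^{[k]}$ commutes with every generator $T^{\overline{\alpha}_j}$ of $\mathcal F^k(T)$, so that applying $T^{[k]}\times T^{[k]}$ to a witnessing sequence for $(x,y)$ --- which replaces $x_i,y_i$ by $Tx_i,Ty_i$ and $a_*$ by $T^{[k]_*}a_*$ while keeping the same $f_i$ --- yields a witness for $(Tx,Ty)$; the same works for $T^{-1}$.

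The heart is transitivity, for which I would first reformulate the relation in terms of the cube space. Writing $\Delta^{[k]}=\{x^{[k]}:x\in X\}$, one has $\C_{\T}^{k}(X)=\overline{\mathcal F^k(T)\Delta^{[k]}}$ (the face-group orbit closure of the diagonal equals the $\mathcal{HK}^k(T)$-orbit closure because $T^{[k]}$ commutes with the face group and preserves $\Delta^{[k]}$, while $\overline{\langle T^{[k]}\rangle x^{[k]}}=\Delta^{[k]}$ by minimality of $(X,T)$). The essential structural observation is that every element of $\mathcal F^k(T)$ fixes the $\vec 0$-coordinate (since $\vec 0\notin\overline{\alpha}_j$ for all $j$), so the face group acts only on the non-$\vec 0$ coordinates $X_*^{[k]}$. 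Consequently $\RP^{[k]}(X)$ should be recast as: $(x,y)\in\RP^{[k]}(X)$ iff there is a common side-coordinate $a_*\in X_*^{[k]}$ with $(x,a_*),(y,a_*)\in\C_{\T}^{k}(X)$, i.e.\ iff the face-group orbit closures of the diagonal points $x^{[k]_*}$ and $y^{[k]_*}$ in $X_*^{[k]}$ can be made to meet. Passing from the definition's single sequence $f_i$ to this cleaner ``shared side'' formulation is itself a nontrivial step, which I would carry out using the enveloping semigroup.

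For transitivity proper I would work in the enveloping (Ellis) semigroup $E=E(X_*^{[k]},\mathcal F^k(T))$ of the face-group action on the non-$\vec 0$ coordinates, together with a fixed minimal left ideal $L\subseteq E$ and minimal idempotents in $L$. The invariance of $\C_{\T}^{k}(X)$ under $\mathcal F^k(T)$ lets me act by an idempotent $u\in L$ on the shared coordinate $a_*$, replacing it by the almost periodic point $ua_*$ lying in a minimal subset, without leaving $\C_{\T}^{k}(X)$; in this way a relation $(x,y)\in\RP^{[k]}(X)$ is witnessed by a side-coordinate sitting inside a single minimal set. Given $(x,y),(y,z)\in\RP^{[k]}(X)$ I would then transport both witnessing configurations into the common minimal left ideal $L$ through the shared point $y$ and splice them, concatenating the two group-element sequences along the common side-coordinate produced by $u$ to obtain a side-coordinate witnessing $(x,z)\in\RP^{[k]}(X)$.

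The main obstacle is precisely this last alignment. The naive relation ``the orbit closures of $x^{[k]_*}$ and $y^{[k]_*}$ intersect'' is not transitive for general actions, and the whole difficulty is to show, via the idempotent and minimal-left-ideal structure, that once the shared coordinates are pushed into minimal sets the splicing is coherent --- equivalently, that the relevant almost periodic points meeting through $y$ force a common minimal set reachable from both $x$ and $z$. Controlling the merely separately continuous multiplication of $E$ and the failure of the action map to be jointly continuous is what makes this step delicate, and it is where minimality of $(X,T)$, ensuring pointwise almost periodicity of the cube system, is used decisively.
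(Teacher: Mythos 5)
The paper itself does not prove this theorem; it is quoted from \cite[Theorem 3.5]{SY12}, so there is no internal argument to compare against. Judged on its own terms, your proposal is sound on the routine parts but has a genuine gap exactly where the theorem's content lies. Reflexivity, symmetry, closedness (the diagonal extraction over a convergent subsequence of the $a_*^{(n)}$), and $T$-invariance (via $T^{[k]}$ commuting with $\mathcal F^k(T)$ and fixing the $\vec 0$-coordinate pattern) are all correctly handled and genuinely easy. The identification $\C_{\T}^{k}(X)=\overline{\mathcal F^k(T)\Delta^{[k]}}$ for minimal systems is also correct.

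The gap is transitivity, which you outline as a strategy but never actually execute. Two specific steps are asserted rather than proved. First, the ``shared side'' reformulation: the definition demands a \emph{single} sequence $f_i$ realizing $(x,a_*)$ and $(y,a_*)$ simultaneously, whereas your reformulation only asks that $(x,a_*)$ and $(y,a_*)$ each lie in $\C_{\T}^{k}(X)$; the reverse implication (separately achievable cubes with a common side can be achieved jointly) is itself one of the nontrivial equivalences of \cite[Theorem 3.5]{SY12}, and you defer it to ``the enveloping semigroup'' without an argument. Second, and more seriously, the splicing of the two witnesses through $y$: you correctly observe that the naive ``orbit closures meet'' relation is not transitive and that one must push the side-coordinates into minimal sets via idempotents, but the step that makes this work in \cite{SY12} is a structural theorem --- for a minimal system, $\overline{\mathcal F^{k}(T)\,x^{[k]}}$ is the \emph{unique} $\mathcal F^{k}(T)$-minimal subset of the fiber $\{z\in\C_{\T}^{k}(X):z_{\vec 0}=x\}$, and $x^{[k]}$ is $\mathcal F^{k}(T)$-almost periodic. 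Without this uniqueness, two almost periodic configurations meeting at $y$ may land in different minimal sets and cannot be concatenated coherently. You name this obstacle yourself (``the main obstacle is precisely this last alignment'') but do not supply the lemma or its proof, which requires a substantial induction on $k$ together with lifting properties of cube spaces under factor maps. As it stands the proposal is a faithful roadmap of the Shao--Ye argument with its hardest theorem left as a black box, so it does not constitute a proof.
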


\begin{defn}
 A t.d.s. 
$(X,T)$ is called \strong{a (topological) system of order $k$} if  $\RP^{[k]}(X)=\{(x,x)\,|\, x\in X\}$.
\end{defn}

\begin{thm}(\cite[Theorem 1.2]{HKM10},  for an alternative proof see \cite[Theorem 1.30]{GMVIII})
A minimal t.d.s. is a topological system of order $k$ iff it is isomorphic to a  minimal $k$-step pronilsystem as t.d.s.
\end{thm}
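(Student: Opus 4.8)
The plan is to prove the two implications separately, treating the reverse direction as routine and the forward direction as the substance of the theorem.

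For the direction in which $(X,T)$ is a minimal $k$-step pronilsystem, I would first reduce to a single nilsystem $X=G/\Gamma$ with $G$ a $k$-step nilpotent Lie group, using that $\RP^{[k]}$ is compatible with inverse limits: if $(x,y)\in \RP^{[k]}(X)$ then its image lies in $\RP^{[k]}$ of each factor, and conversely approximating cube configurations lift through the factor maps. For a $k$-step nilmanifold the cube space $\C_{\T}^{k+1}(X)$ is an explicit algebraic object, namely the orbit closure of the diagonal under $\mathcal{HK}^{k+1}(T)$, which is parametrized by cubes built from $G$ and its lower central series. Since $G_{k+1}=\{e\}$, any such cube two of whose faces agree has its two remaining endpoint coordinates equal, and unwinding Definition \ref{defn for RP} this forces $\RP^{[k]}(X)=\{(x,x):x\in X\}$.

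For the forward direction I would argue by induction on $k$. The case $k=1$ is the classical statement that $\RP^{[1]}(X)=\{(x,x):x\in X\}$ characterizes equicontinuity, whence minimality makes $(X,T)$ a rotation on a compact abelian group, which is an inverse limit of rotations on finite-dimensional Lie groups, i.e. a $1$-step pronilsystem. For the inductive step I would pass to the factor $(Y,T):=(X/\RP^{[k-1]}(X),T)$; by Theorem \ref{thm:rpk is eq rel} and functoriality of the regionally proximal tower one has $\RP^{[k-1]}(Y)=\{(y,y):y\in Y\}$, so by induction $Y$ is a minimal $(k-1)$-step pronilsystem. The crux is to analyze the factor map $\pi\colon X\to Y$: exploiting the $\mathcal{F}^{k}(T)$-action on $\C_{\T}^{k}(X)$ together with the hypothesis $\RP^{[k]}(X)=\{(x,x):x\in X\}$, one shows that $\pi$ is an isometric extension, and in fact a topological group extension by a compact abelian group $K$. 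Combining this abelian structure group with the $(k-1)$-step nilpotent group underlying $Y$ yields a $k$-step nilpotent group acting transitively on $X$, exhibiting $X$ as an inverse limit of $k$-step nilsystems.

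The main obstacle is exactly this inductive step: identifying $\pi\colon X\to X/\RP^{[k-1]}(X)$ as an abelian group extension of the correct algebraic type and checking that the assembled acting group is genuinely $k$-step nilpotent. This is the technical heart of the Host--Kra--Maass structure theorem and needs the full cube-group machinery --- transitivity of the face group on the appropriate fibers of $\C_{\T}^{k}(X)$, extraction of the compact abelian structure group, and control of the relevant cocycle. A more conceptual alternative, which I would develop in parallel following \cite{GMVIII}, is to observe that the family of dynamical cube spaces $(\C_{\T}^{n}(X))_{n\ge 0}$ forms a compact ergodic $k$-step nilspace and then to invoke the nilspace structure theorem, which directly yields that such a nilspace, and hence $X$, is an inverse limit of nilmanifolds; there the difficulty shifts to verifying the nilspace composition and corner-completion axioms for the cube spaces and to the analytic input of the structure theorem itself.
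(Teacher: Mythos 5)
The paper does not prove this statement at all: it is imported verbatim from \cite[Theorem 1.2]{HKM10} (alternatively \cite[Theorem 1.30]{GMVIII}), so there is no internal proof to compare yours against. Judged on its own terms, your proposal is a reasonable roadmap of the two known proofs, but it is a roadmap rather than a proof, and the place where it stops is exactly the theorem's content.

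Concretely, in the forward direction your inductive step asserts that once $\pi\colon X\to Y=X/\RP^{[k-1]}(X)$ is shown to be a topological group extension by a compact abelian group $K$, one can ``combine'' $K$ with the nilpotent group underlying $Y$ to get a $k$-step nilpotent transitive group. That implication is false without substantial extra input: an arbitrary compact abelian group extension of a $(k-1)$-step pronilsystem need not be a $k$-step pronilsystem (a generic continuous skew product over an irrational rotation is a group extension by $\mathbb{S}^1$ but is almost never a $2$-step nilsystem). What makes the argument work in \cite{HKM10} is that the triviality of $\RP^{[k]}(X)$ forces the cocycle of the extension to be of a restricted cohomological type, and establishing this --- via the cube groups, the measurable Host--Kra structure theorem applied to an invariant measure, and the passage back from the measurable to the topological category --- is the entire difficulty; your sketch names this step but does not supply it. The same remark applies to the parallel nilspace route: verifying the corner-completion axioms for $(\C_{\T}^{n}(X))_{n\ge 0}$ and invoking the nilspace structure theorem is again the whole theorem, not a reduction of it. The easy direction is essentially right, though ``any such cube two of whose faces agree has its two remaining endpoint coordinates equal'' silently uses the explicit Host--Kra--Parry description of $\C_{\T}^{k+1}(G/\Gamma)$, which itself requires proof. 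Since the paper treats the statement as a citation, the honest conclusion is that your proposal correctly identifies the architecture of the external proofs but does not close the gap that makes them nontrivial.
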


Theorem \ref{thm:rpk is eq rel} allows us to give the following definition.

\begin{defn}\label{def:max nilfactor}
Let $(X,T)$ be a minimal t.d.s. Define the \textbf{maximal $k$-step nilfactor} by $W_k(X)=X/\RP^{[k]}(X)$. Denote the associated map $\pitk:X\rightarrow W_{k}(X)$ as the \textbf{(topological) canonical $k$-th projection}.
\end{defn}

\begin{rem}\label{rem:classical top max} 
The terminology of Definition \ref{def:max nilfactor} is justified by the following property: Any topological factor map $\phi:(X,T)\rightarrow (Y,T)$ where $(Y,T)$ is a system of order $k$,  factors through the canonical $k$-th projection $\pitk:(X,T)\rightarrow (W_k(X),T)$, i.e., there exists a unique $\psi:(W_k(X),T)\rightarrow  (Y,T)$ such that $\phi=\psi\circ \pitk$ (\cite[Proposition 4.5]{HKM10}). In section \ref{uniqueness for Z and W} we establish the complementary property of \textit{universality} for $(W_k(X),T)$.
\end{rem}
\begin{defn}(\cite[Definition 3.1]{GL2019})
A t.d.s. $(X,T)$ is called \textbf{$k$-cube uniquely ergodic} if 
$(\C_{\T}^{k}(X),\mathcal{HK}^{k}(T))$
is uniquely ergodic.
\end{defn}

\subsection{\cfnk systems.}\label{subsec: def:continuously to k-nil}

\begin{defn}\label{def:continuously to k-nil}
 For $k\geq 0$, we say $(X,T)$ is  a \strong{CF-Nil($k$)} system if $(X,T)$ is strictly ergodic and $(Z_k(X),\mathcal{Z}_k(X),\mu_k,T)$ is isomorphic to $(W_k(X),\omega_k,T)$ as m.p.s.where $\mu_k$ and $\omega_k$ are the images of the unique invariant measure of $(X,T)$ under the measurable, respectably topological canonical $k$-th projections.
\end{defn}
\begin{rem}\label{rem:0-nil}
 By convention  $Z_0(X)=W_0(X)=\{\bullet\}$. Thus every strictly ergodic $(X,T)$ is CF-Nil($0$).
\end{rem}
\noindent
The term "$(X,\mu, T)$ is CF-Nil($k$)" is an abbreviation of 
\vspace{0.25 cm}
\begin{center}
"$(X,\mu, T)$ \textbf{C}ontinuously \textbf{F}actors  on a $\mathbf{k}$-step pro\textbf{Nil}system which is isomorphic to $(Z_k(X),\mathcal{Z}_k(X),\mu_k,T)$ as m.p.s."
\end{center}
\vspace{0.25 cm}
Indeed if  $(W_k(X),\omega_k,T)$ is isomorphic to $(Z_k(X),\mathcal{Z}_k(X),\mu_k,T)$ as m.p.s. then obviously this condition holds. The reverse implication is given by the following proposition which has been (implicitly) used several times in the literature (\cite{HK09,HKM2014,HSY2019pointwise}). Its proof is given at the end of Subsection \ref{subsec:Universality}.

\begin{prop}\label{pronilfactor maximal}
Let $(X,T)$ be a strictly ergodic t.d.s. which topologically factors on a (minimal) $k$-step pronilsystem $(\hat{Z}_k,T)$ with the unique ergodic measure $\gamma_k$. If $(Z_k(X),\mathcal{Z}_k(X),\mu_k,T)$ is isomorphic to $(\hat{Z}_k,\gamma_k,T)$ as m.p.s., then $(\hat{Z}_k,T)$ and $(W_k(X),T)$ are isomorphic as t.d.s. In particular $(X,\mu, T)$ is \cfnkp
\end{prop}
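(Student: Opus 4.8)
The plan is to first upgrade the given data to a \emph{measurable} isomorphism $(Z_k(X),\mathcal{Z}_k(X),\mu_k,T)\cong(W_k(X),\omega_k,T)$ by running the available factor maps around a loop and invoking measurable coalescence, and only then to pass from the measurable statement to the desired topological isomorphism via the uniqueness of topological pronilsystem models.

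First I would assemble all the relevant factor maps. Since $(\hat{Z}_k,T)$ is a $k$-step pronilsystem it is a topological system of order $k$, so the hypothesized topological factor map $\phi\colon(X,T)\to(\hat{Z}_k,T)$ factors through the topological canonical projection by the maximality of $W_k(X)$ (Remark~\ref{rem:classical top max}): there is a topological factor map $\psi\colon(W_k(X),T)\to(\hat{Z}_k,T)$ with $\phi=\psi\circ\pitk$. On the measurable side, $\pitk\colon X\to W_k(X)$ is in particular a measurable factor map; since $X$ is strictly ergodic, $W_k(X)$ is uniquely ergodic (Proposition~\ref{prop:factor of ue}) with invariant measure $\omega_k=(\pitk)_*\mu$, so $(W_k(X),\omega_k,T)$ is a minimal $k$-step pronilsystem that is a measurable factor of $(X,\mathcal{X},\mu,T)$, i.e.\ a measurable system of order $k$. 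By the maximality of $Z_k(X)$ (Remark~\ref{rem:classical Z}) the map $\pitk$ then factors through $\pi_k$, giving a measurable factor map $\theta\colon(Z_k(X),\mu_k,T)\to(W_k(X),\omega_k,T)$ with $\pitk=\theta\circ\pi_k$ a.e.

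Next I would close the loop. Let $\alpha\colon(\hat{Z}_k,\gamma_k,T)\to(Z_k(X),\mu_k,T)$ be the hypothesized m.p.s.\ isomorphism and set
\[
\gamma:=\psi\circ\theta\circ\alpha\colon(\hat{Z}_k,\gamma_k,T)\to(\hat{Z}_k,\gamma_k,T),
\]
a measurable factor map of $\hat{Z}_k$ to itself (a composition of surjective, measure-preserving, equivariant maps). Because $(\hat{Z}_k,\gamma_k,T)$ is a minimal uniquely ergodic $k$-step pronilsystem, Theorem~\ref{thm:coalescense}(I) forces $\gamma$ to be a measurable isomorphism; in particular $\psi\circ\theta$ is injective a.e., hence so is $\theta$. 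Being also surjective as a factor map, $\theta$ is itself a measurable isomorphism, which already yields $(Z_k(X),\mathcal{Z}_k(X),\mu_k,T)\cong(W_k(X),\omega_k,T)$ as m.p.s.; this is exactly the \cfnk conclusion. Finally, both $(W_k(X),T)$ and $(\hat{Z}_k,T)$ are topological $k$-step pronilsystems, and through $\theta$ (respectively $\alpha$) each is a topological model of the single measurable pronilsystem $(Z_k(X),\mathcal{Z}_k(X),\mu_k,T)$; by the uniqueness of topological pronilsystem models (Remark~\ref{rem:meas and top pronil}, i.e.\ Theorem~\ref{thm: meas coalescence}) any two such models are isomorphic as t.d.s., whence $(\hat{Z}_k,T)\cong(W_k(X),T)$.

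The main obstacle is the very last step: a topological factor map that happens to be a measure-isomorphism (such as $\psi$, or $\theta$ regarded topologically) need not be a homeomorphism for general systems, so the measurable isomorphism $\theta$ cannot be promoted to a topological one by soft arguments. What rescues the argument is the rigidity special to pronilsystems, namely topological coalescence (Theorem~\ref{thm:coalescense}(II)) and the resulting uniqueness of topological models. Accordingly I would take care to arrange the loop $\psi\circ\theta\circ\alpha$ so that it lands precisely on the hypotheses of Theorem~\ref{thm:coalescense}, and to present $W_k(X)$ and $\hat{Z}_k$ explicitly as two topological models of one fixed measurable system before appealing to uniqueness.
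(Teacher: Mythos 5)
Your proof is correct and follows essentially the same route as the paper: both close the loop $W_k(X)\to\hat{Z}_k\to Z_k(X)\to W_k(X)$ built from the two maximality properties and then invoke coalescence for pronilsystems. The only difference is ordering --- the paper upgrades the measurable arrow $\hat{Z}_k\to W_k(X)$ to a topological one via Proposition \ref{continuty for pronilsystem factor map} and applies topological coalescence once to obtain the homeomorphism directly, whereas you first extract the m.p.s.\ isomorphism via measurable coalescence and then appeal to uniqueness of topological pronilsystem models, which packages those same two ingredients.
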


Theorem \ref{thm:equiv cube u.e} allows us to give a remarkable simple proof of the following Theorem.

\begin{thm}\label{factor and k kronecker}
Let $(X,T)$ be a \cfnk system. The following holds:
\begin{enumerate}
    \item If $\pi:(X,T)\rightarrow (Y,T)$ is a topological factor map, then $(Y,T)$ is a \cfnk system.
    \item
    $(X,T)$ is a \cfni system for $0\leq i\leq k$. 
\end{enumerate}

\end{thm}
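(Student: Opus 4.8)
The plan is to leverage Theorem~\ref{thm:equiv cube u.e}, which identifies \cfnk systems with $(k+1)$-cube uniquely ergodic systems, thereby reducing both claims to statements about unique ergodicity of dynamical cubespaces under the Host-Kra cube group action. This reformulation is what makes the proof ``remarkably simple,'' as promised in the excerpt.

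\smallskip

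\noindent\textbf{Proof of (1).} First I would assume $(X,T)$ is \cfnk, so by Theorem~\ref{thm:equiv cube u.e} the system $(\C_{\T}^{k+1}(X),\mathcal{HK}^{k+1}(T))$ is uniquely ergodic. Given a topological factor map $\pi:(X,T)\rightarrow (Y,T)$, I would show that $\pi$ induces a factor map on the level of cubespaces. Indeed, applying $\pi$ coordinatewise yields a continuous surjection $\pi^{[k+1]}:X^{[k+1]}\rightarrow Y^{[k+1]}$ that intertwines the respective cube-group actions: since $T\pi=\pi T$, the generators $T^{\overline{\alpha}_j}$ and $T^{[k+1]}$ of $\mathcal{HK}^{k+1}(T)$ are respected, so $\pi^{[k+1]}$ maps $\mathcal{HK}^{k+1}(T)$-orbits to $\mathcal{HK}^{k+1}(T)$-orbits. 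Because $\pi^{[k+1]}$ carries the generating orbit-closure $\overline{\{gx^{[k+1]}\}}$ onto $\overline{\{g(\pi(x))^{[k+1]}\}}$, it restricts to a surjection $\C_{\T}^{k+1}(X)\rightarrow\C_{\T}^{k+1}(Y)$ that is a topological factor map of $\mathcal{HK}^{k+1}(T)$-systems. The Host-Kra cube group is nilpotent, hence amenable, so Proposition~\ref{prop:factor of ue} applies: the factor of a uniquely ergodic amenable-group system is uniquely ergodic. Therefore $(\C_{\T}^{k+1}(Y),\mathcal{HK}^{k+1}(T))$ is uniquely ergodic, and by the converse direction of Theorem~\ref{thm:equiv cube u.e}, $(Y,T)$ is \cfnk, once one checks $Y$ is minimal (which follows since a factor of a minimal system is minimal).

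\smallskip

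\noindent\textbf{Proof of (2).} For the second claim I would again use unique ergodicity of $(\C_{\T}^{k+1}(X),\mathcal{HK}^{k+1}(T))$ and show it forces unique ergodicity of $(\C_{\T}^{i+1}(X),\mathcal{HK}^{i+1}(T))$ for each $0\leq i\leq k$. The natural tool is a projection between cubespaces of different dimensions: for $i\le k$, there is a coordinate-projection $X^{[k+1]}\rightarrow X^{[i+1]}$ (selecting a face of the cube $\{0,1\}^{k+1}$ of dimension $i+1$) which is equivariant for the corresponding cube-group actions and maps $\C_{\T}^{k+1}(X)$ onto $\C_{\T}^{i+1}(X)$. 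Since this is a topological factor map of amenable-group systems, Proposition~\ref{prop:factor of ue} again yields unique ergodicity of the lower cubespace. Applying Theorem~\ref{thm:equiv cube u.e} in the reverse direction gives that $(X,T)$ is \cfni.

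\smallskip

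\noindent\textbf{The main obstacle} will be verifying cleanly that the relevant maps are genuine factor maps of cube-group dynamical systems, specifically that the coordinate projection in part~(2) correctly intertwines $\mathcal{HK}^{k+1}(T)$ with $\mathcal{HK}^{i+1}(T)$ and that its image is exactly $\C_{\T}^{i+1}(X)$ rather than a proper subsystem. One must identify the right face-inclusion $\{0,1\}^{i+1}\hookrightarrow\{0,1\}^{k+1}$ and confirm that the generating element $x^{[k+1]}$ projects to $x^{[i+1]}$ while the face-group generators project compatibly; this is a bookkeeping check on how the upper faces $\overline{\alpha}_j$ behave under restriction of coordinates. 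Everything else is a direct invocation of the amenable-group transfer principle for unique ergodicity (Proposition~\ref{prop:factor of ue}) together with the characterization in Theorem~\ref{thm:equiv cube u.e}.
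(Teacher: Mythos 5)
Your proposal is correct and follows essentially the same route as the paper: both parts reduce to unique ergodicity of the cubespaces via Theorem~\ref{thm:equiv cube u.e}, transfer unique ergodicity along the induced factor maps using Proposition~\ref{prop:factor of ue} (the coordinatewise map $\pi^{[k+1]}$ for part (1), and the face projection $(c_{v_1,\ldots,v_{k+1}})\mapsto(c_{v_1,\ldots,v_{i+1},0,\ldots,0})$ for part (2)), and then apply Theorem~\ref{thm:equiv cube u.e} in the reverse direction. The ``bookkeeping check'' you flag as the main obstacle is exactly the content of the paper's displayed formula for the projection, so nothing further is missing.
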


\begin{proof}
To prove (1) we note $(Y,T)$ is minimal being a factor of a minimal system and $(\C_{\T}^{k+1}(Y),\mathcal{HK}^{k+1}(T))$ is uniquely ergodic being a factor of $(\C_{\T}^{k+1}(X),\mathcal{HK}^{k+1}(T))$ under the natural topological factor map induced from  $\pi:(X,T)\rightarrow (Y,T)$ (see Proposition \ref{prop:factor of ue}). By Theorem \ref{thm:equiv cube u.e} this implies $(Y,T)$ is a \cfnk system.

\sloppy
Similarly, to prove (2), we consider 
$(\C_{\T}^{k+1}(X),\mathcal{HK}^{k+1}(T))\rightarrow (\C_{\T}^{i+1}(X),\mathcal{HK}^{i+1}(T))$ given by $$(c_{v_1,\ldots,v_{k+1}})_{(v_1,\ldots,v_{k+1})\in \{0,1\}^{k+1}}\mapsto (c_{v_1,\ldots,v_{i+1},0,\ldots, 0})_{(v_1,\ldots,v_{i+1})\in \{0,1\}^{i+1}}$$

\end{proof}

\subsection{A \cfnk topological model.} \label{subsec:Weiss thm}

Recall the definitions of Subsection \ref{subsec:topological model}. In \cite[Theorem 2]{W85} Benjamin Weiss proved the following theorem:

\begin{thm}(Weiss)\label{thm: Weiss thm}
Let  $(Z,\nu,S)$ be a strictly ergodic t.d.s. and $(X,\mathcal{X},\mu,S)$ an ergodic m.p.s. such that there exists a measurable factor $\pi:(X,\mathcal{X},\mu,T)\rightarrow (Z,\mathcal{Z},\nu,S)$. Then $\pi$ has a topological model $\hat{\pi}:(\hat{X},\hat{T})\rightarrow (Z,S)$ where $(\hat{X},\hat{T})$ is strictly ergodic.
\end{thm}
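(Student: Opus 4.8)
The plan is to carry out the Jewett--Krieger construction \emph{relative} to the prescribed base $(Z,\nu,S)$. The absolute Jewett--Krieger theorem already yields \emph{some} strictly ergodic model of $(X,\mathcal{X},\mu,S)$, but nothing guarantees that the measurable factor map onto $Z$ becomes continuous in that model; indeed, taking independent strictly ergodic models of $X$ and of $Z$ will in general destroy continuity of the factor map. The whole content of the theorem is that the model of the extension can be built directly \emph{over} the given model of the factor. So the object to construct is a coding of $X$ whose $Z$-coordinate is literally $\pi$ and whose orbit closure is minimal and uniquely ergodic.

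First I would fix the combinatorial data. Since $(X,\mathcal{X},\mu)$ is standard Borel, choose a refining sequence of finite measurable partitions $\mathcal{P}_1\preceq \mathcal{P}_2\preceq\cdots$ generating $\mathcal{X}$ under $S$, arranged so that each $\mathcal{P}_n$ refines $\pi^{-1}(\mathcal{R}_n)$, where $\mathcal{R}_1\preceq\mathcal{R}_2\preceq\cdots$ is a generating sequence of partitions of the compact metric space $Z$. Coding $x\mapsto(\pi(x),\omega(x))$, where $\omega(x)$ records the $\mathcal{P}_n$-names of the whole $S$-orbit of $x$ at every scale $n$, gives a measure-preserving embedding of $(X,\mu,S)$ into a symbolic extension of $Z$, sitting inside $Z\times\Omega$ for a symbolic space $\Omega$. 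I would then let $(\hat X,\hat T)$ be the orbit closure of the support of the pushed-forward measure $\hat\mu$ and let $\hat\pi$ be the projection to $Z$, which is continuous because the $Z$-coordinate is kept literally.

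The hard part will be arranging the partitions $\mathcal{P}_n$ so that $(\hat X,\hat T)$ is simultaneously minimal and uniquely ergodic while the coding stays measurably invertible. This is exactly the technical core of the Jewett--Krieger circle of ideas, now performed relative to $Z$: using a relativized Rokhlin-tower lemma over the strictly ergodic base one inductively ``paints'' the towers with $\mathcal{P}_n$-names so that (i) every name-pattern that occurs does so with a frequency converging \emph{uniformly} in the starting point --- yielding unique ergodicity via the standard criterion that uniform convergence of ergodic averages on a dense family of functions forces $|\PM(\hat X)|=1$ --- and (ii) every occurring name-pattern occurs syndetically, forcing every orbit closure to equal $\hat X$ and hence minimality. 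The painting must respect the already-fixed $Z$-coordinate, and here the strict ergodicity of $Z$ is essential: its own tower structure is already uniform, so the relative construction inherits uniformity from the base and combines it with the freshly painted fibre names.

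Finally I would check the conclusions. Minimality and unique ergodicity of $(\hat X,\hat T)$ come from (i)--(ii), so $\hat X$ is strictly ergodic with unique invariant measure $\hat\mu$; the isomorphism $(\hat X,\hat\mu,\hat T)\cong(X,\mathcal{X},\mu,S)$ as m.p.s. follows because the generating property of the $\mathcal{P}_n$ makes the coding (bi)measurable and essentially injective; and $\hat\pi$ models $\pi$ because $\hat\pi$ is continuous, surjective onto $Z$, and satisfies $\hat\pi\circ(\text{coding})=\pi$ $\mu$-a.e. by construction. Thus $\hat\pi:(\hat X,\hat T)\to(Z,S)$ is the desired strictly ergodic topological model of $\pi$, with the main difficulty concentrated entirely in the relativized uniform filling of the Rokhlin towers.
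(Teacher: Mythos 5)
The paper does not prove this theorem at all: it is imported verbatim from \cite[Theorem 2]{W85}, so there is no internal argument to compare yours against. Your outline does correctly identify the strategy of Weiss's actual proof --- a Jewett--Krieger construction performed \emph{relative} to the prescribed strictly ergodic base, keeping the $Z$-coordinate literal and coding only the fibre data symbolically --- and your opening observation that an absolute Jewett--Krieger model is useless here (because nothing makes the factor map continuous in an independently chosen model) is exactly the right motivation for the relative statement.

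As a proof, however, the proposal has a genuine gap: the entire technical content is compressed into the sentence announcing that one ``inductively paints the towers'' so that (i) and (ii) hold. Everything difficult lives there, and several of the needed ingredients are nontrivial to even state correctly. Concretely: (a) you need a \emph{relative} Rokhlin/tower lemma whose towers can be painted without disturbing the already-fixed $Z$-coordinate, and you must say what ``uniform frequency'' means relatively --- since the base is already uniquely ergodic, the right condition is uniform convergence of Birkhoff averages for functions depending on the new fibre coordinates (cylinder functions in $\Omega$), conditioned on the $Z$-names, not just uniformity ``in the starting point''; (b) there is a standing tension in all Jewett--Krieger arguments between making the coding essentially injective (which forces the names to carry a lot of information) and making pattern frequencies uniform and occurrences syndetic (which forces the names to be highly repetitive), and the inductive scheme that reconciles these two demands at every scale, while also respecting the base, is precisely what must be exhibited; (c) the partitions $\mathcal{R}_n$ of the compact metric space $Z$ cannot be taken arbitrarily --- one needs sets whose boundaries are $\nu$-null (or a passage to a zero-dimensional extension of $Z$) for the symbolic picture over $Z$ to interact correctly with genericity and with Lemma \ref{lem:supp gen}-type arguments; and (d) minimality requires syndetic occurrence with gap bounds that survive every later refinement stage, which must be built into the induction hypothesis. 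None of these is routine, and collectively they constitute the theorem. So the proposal is a correct roadmap to Weiss's argument rather than a proof of the statement.
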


The following theorem is already implicit in \cite{HSY2019pointwise}.

\begin{thm}\label{thm:continuously to k-nil realization}
Let $k\in \Z$. Every ergodic system $(X,\mathcal{X},\mu,T)$ has a topological model $(\hat{X},\hat{T})$ such that $(\hat{X},\hat{T})$ is \cfnkp
\end{thm}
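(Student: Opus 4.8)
The plan is to produce the desired model by first realizing the maximal measurable $k$-step pronilfactor as a concrete strictly ergodic pronilsystem, and then invoking Weiss's theorem to lift the associated factor map to the topological category.

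First I would consider the maximal measurable $k$-step pronilfactor $(Z_k(X),\mathcal{Z}_k(X),\mu_k,T)$ together with the canonical projection $\pi_k:(X,\mathcal{X},\mu,T)\to (Z_k(X),\mathcal{Z}_k(X),\mu_k,T)$. By Theorem \ref{thm:meas order k} this factor is isomorphic as m.p.s. to a minimal $k$-step pronilsystem, and by Remark \ref{rem:meas and top pronil} it may be presented as a topological minimal $k$-step pronilsystem $(\tilde{Z}_k,T)$; being minimal, it is uniquely ergodic by Proposition \ref{prop:basic properties for pronilsystems}, hence strictly ergodic, with unique invariant measure $\gamma_k$ and a measurable isomorphism $(Z_k(X),\mathcal{Z}_k(X),\mu_k,T)\cong (\tilde{Z}_k,\gamma_k,T)$. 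Composing $\pi_k$ with this isomorphism yields a measurable factor map $\pi:(X,\mathcal{X},\mu,T)\to (\tilde{Z}_k,\gamma_k,T)$ onto a strictly ergodic system.

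Next I would apply Weiss's theorem (Theorem \ref{thm: Weiss thm}) to $\pi$. This produces a strictly ergodic topological model $(\hat{X},\hat{T})$ of $(X,\mathcal{X},\mu,T)$ together with a topological factor map $\hat{\pi}:(\hat{X},\hat{T})\to (\tilde{Z}_k,T)$. In particular $(\hat{X},\hat{T})$ is strictly ergodic and topologically factors onto the minimal $k$-step pronilsystem $(\tilde{Z}_k,T)$, whose unique invariant measure is $\gamma_k=\hat{\pi}_*\hat{\mu}$ by unique ergodicity.

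It remains to verify the hypotheses of Proposition \ref{pronilfactor maximal} for $(\hat{X},\hat{T})$. Since $(\hat{X},\hat{T})$ is a topological model of $(X,\mathcal{X},\mu,T)$, the two systems are isomorphic as m.p.s.; because the maximal measurable $k$-step pronilfactor is defined purely measure-theoretically, it is an invariant of m.p.s. isomorphism, so $(Z_k(\hat{X}),\mathcal{Z}_k(\hat{X}),\hat{\mu}_k,\hat{T})$ is isomorphic as m.p.s. to $(Z_k(X),\mathcal{Z}_k(X),\mu_k,T)$, and hence to $(\tilde{Z}_k,\gamma_k,T)$ by construction. Proposition \ref{pronilfactor maximal}, applied with $(\hat{X},\hat{T})$ in the role of $(X,T)$ and $(\tilde{Z}_k,T)$ in the role of $(\hat{Z}_k,T)$, then gives that $(\hat{X},\hat{T})$ is \cfnkp The argument is uniform in $k\geq 0$ (the degenerate case $k=0$ being immediate from Remark \ref{rem:0-nil}, as $\tilde{Z}_0$ is then trivial). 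I expect the only genuinely delicate point to be the bookkeeping of the last step — confirming that the measurable identification of Host-Kra factors transported across the m.p.s. isomorphism is compatible with the topological factor supplied by Weiss's theorem, so that Proposition \ref{pronilfactor maximal} applies verbatim; everything else is an assembly of the cited results.
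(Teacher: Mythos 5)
Your proposal is correct and follows essentially the same route as the paper: realize $(Z_k(X),\mathcal{Z}_k(X),\mu_k,T)$ as a strictly ergodic $k$-step pronilsystem via Theorem \ref{thm:meas order k}, apply Weiss's Theorem \ref{thm: Weiss thm} to obtain a strictly ergodic model of $(X,\mathcal{X},\mu,T)$ factoring topologically onto it, and conclude with Proposition \ref{pronilfactor maximal}. The "delicate point" you flag is handled exactly as you suggest — the Host--Kra factor is an invariant of m.p.s.\ isomorphism, so the hypothesis of Proposition \ref{pronilfactor maximal} holds for $(\hat{X},\hat{T})$ verbatim.
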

\begin{proof}

By Theorem \ref{thm:meas order k},   $(Z_k(X),\mathcal{Z}_k(X),\mu_k,T)$ is measurably isomorphic to a strictly ergodic inverse limit of $k$-step nilsystems $(\hat{Z}_k,\hat{T})$. By Theorem \ref{thm: Weiss thm}, $(X,\mathcal{X},\mu,T)$ admits a strictly ergodic topological model $(\hat{X},\hat{T})$ such that there exists a topological factor map $(\hat{X},\hat{T})\rightarrow (\hat{Z}_k,\hat{T})$ which is a topological model of $(X,\mathcal{X},\mu,T)\rightarrow(Z_k(X),\mathcal{Z}_k(X),\mu_k,T)$.
By Proposition \ref{pronilfactor maximal}, $(\hat{X},\hat{T})$ is \cfnkp
\end{proof}
\begin{rem}
 
One can easily construct a strictly ergodic system which is not \cfnkp\, Let $(X,\mathcal{X},\mu,T)$ be an irrational rotation on the circle. By \cite{Lehrer87}, there exists a topologically mixing and strictly ergodic model $(\hat{X},\hat{\mu},T)$ of $(X,\mu,T)$. As $X$ is an irrational rotation, $Z_1(\hat{X})=\hat{X}$ and therefore for all $k\geq 1$, $Z_k(\hat{X})=\hat{X}$. As $\hat{X}$ is  topologically mixing, it is  topologically weakly mixing and therefore for all $k\geq 1$, $W_k(\hat{X})=\{\bullet\}$ (\cite[Theorem 3.13(1)]{SY12}). It follows for all $k\geq 1$ one has that $(W_k(\hat{X}),T)$ is not isomorphic to $(Z_k(\hat{X}),\hat{\mu}_1,T)$ as m.p.s.



\end{rem}

\section{Coalescence and universality for maximal pronilfactors.}\label{uniqueness for Z and W}

\subsection{Coalescence}

In this section we establish Theorem \ref {thm:coalescense}, i.e., both \textit{topological coalescence} (introduced in \cite{auslander1963endomorphisms}) and \textit{measurable coalescence} (introduced in \cite{hahn1968some}) for minimal pronilsystems\footnote{The definitions of these concepts appear as part of the statements of Theorems \ref{thm: top coalescence} and \ref{thm: meas coalescence}  respectively.}. There is a vast literature dedicated to coalescence (see \cite{lemanczyk1992coalescence} and references within).  Coalescence plays an important role in the next subsection. 

\begin{thm} (Topological coalescence for minimal pronilsystems)\label{thm: top coalescence} Let $(Y,T)$ be a minimal $k$-step pronilsystem. Then $(Y,T)$  is topologically coalescent, i.e. if $\Phi:(Y,T)\rightarrow (Y,T)$ is a topological factor map, then $\Phi$ is a topological isomorphism.
\end{thm}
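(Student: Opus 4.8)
The plan is to prove the statement first for a single minimal nilsystem $(Y,T)=(G/\Gamma,T)$ by induction on the step $k$, and then to obtain the general pronilsystem by a limiting argument over a presentation $(Y,T)=\underleftarrow{\lim}(Y_m,T_m)$. The base case $k\le 1$ is immediate: a minimal $1$-step nilsystem is a minimal rotation on a compact abelian Lie group $Z$, and if $\Phi$ commutes with the rotation then $z\mapsto\Phi(z)z^{-1}$ is a continuous $T$-invariant function on $Z$, hence constant by minimality; thus $\Phi$ is a translation, and in particular a homeomorphism.

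For the inductive step I would exploit the tower of pronilfactors. Set $W=W_{k-1}(Y)$ with canonical projection $p:Y\to W$. Then $p\circ\Phi$ is a topological factor map of $Y$ onto the system of order $k-1$ given by $W$, so by the universal property of $W_{k-1}$ (Remark \ref{rem:classical top max}) it factors through $p$: there is a factor map $\bar{\Phi}:W\to W$ with $p\circ\Phi=\bar{\Phi}\circ p$, and $\bar{\Phi}$ is onto because $p\circ\Phi$ is. By the induction hypothesis $\bar{\Phi}$ is a topological isomorphism. Now $p:Y\to W$ is a principal bundle for the torus $U=G_k/(G_k\cap\Gamma)$ (connected for $k\ge 2$) acting by the central translations, which commute with $T$. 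Since $\bar{\Phi}$ is injective, $\Phi$ maps each $p$-fibre into a single $p$-fibre, so $\Phi(u\cdot y)$ and $\Phi(y)$ lie in the same fibre and $\Phi(u\cdot y)=\sigma_y(u)\cdot\Phi(y)$ for a unique $\sigma_y(u)\in U$. Because $T$ commutes with the $U$-action and $\Phi$ is $T$-equivariant, the map $y\mapsto\sigma_y(u)$ is continuous and $T$-invariant, hence constant by minimality; thus $\sigma_y=\sigma$ is a continuous endomorphism of $U$ independent of $y$, the fibres of $\Phi$ are exactly the $\ker\sigma$-orbits, and $\Phi$ is injective if and only if $\ker\sigma$ is trivial.

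To control $\ker\sigma$ I would first observe that $\Phi$ factors as the quotient $Y\to Y/\ker\sigma$ (a principal $\ker\sigma$-bundle) followed by a homeomorphism onto $Y$; comparing dimensions of the nilmanifolds $Y$ and $Y/\ker\sigma\cong Y$ forces $\dim\ker\sigma=0$, so the closed subgroup $\ker\sigma\le U$ is finite. To pass from finite to trivial I would use unique ergodicity: $(Y,T)$ is uniquely ergodic (Proposition \ref{prop:basic properties for pronilsystems}), and since the $U$-action commutes with $T$ the measure $\nu$ is $U$-invariant (its $U$-translate is again $T$-invariant, hence equals $\nu$). Therefore the conditional measures of $\nu$ along the fibres of $\Phi$ are uniformly distributed over the finite $\ker\sigma$-orbits. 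If $\Phi$ were injective on a set of full $\nu$-measure these conditional measures would be point masses, forcing $\ker\sigma=\{e\}$ and hence the injectivity of $\Phi$.

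The main obstacle is thus the \emph{measurable} injectivity of $\Phi$. For $k=1$ it is clear from the simple discrete spectrum of the rotation, since the Koopman operator $\Phi^{*}$ then maps each one-dimensional eigenspace onto itself. In general I would derive it from the Host--Kra machinery: as $Y$ is a system of order $k$, the seminorm $|||\cdot|||_{k+1}$ is a genuine norm (Lemma \ref{lem: seminorm and k factor}), and functoriality of the measures $\mu^{[k+1]}$ under the measure-preserving self-factor $\Phi$ shows that $\Phi^{*}$ is an $L^2$-isometry preserving $|||\cdot|||_{k+1}$ and commuting with the Koopman operator of $T$; proving that such a $\Phi^{*}$ is necessarily onto --- equivalently, the companion measurable coalescence statement --- is the heart of the matter. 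Once measurable injectivity is secured, the previous paragraph yields $\ker\sigma=\{e\}$, so $\Phi$ is a continuous bijection of a compact space and hence a topological isomorphism; the pronilsystem case then follows by running this argument on each finite level $Y_m$ and passing to the inverse limit.
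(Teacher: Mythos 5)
Your reduction of the problem is reasonable as far as it goes: the induced map $\bar\Phi$ on $W_{k-1}(Y)$, the vertical endomorphism $\sigma$ of the structure group $U$, and the dimension count showing $\ker\sigma$ is finite are all correct steps. But the proof stops exactly at the only genuinely hard point. A nilmanifold does admit nontrivial finite-to-one self-coverings (already $\Z\le\Z$ of index $2$ for the circle), so no topological or dimension argument can rule out a finite kernel; the dynamics must enter, and you never supply that argument. You reduce the finite-kernel case to the \emph{measurable} injectivity of $\Phi$ (equivalently, to measurable coalescence) and then explicitly declare that this ``is the heart of the matter'' without proving it. Worse, in the paper's logical architecture this is circular: measurable coalescence (Theorem \ref{thm: meas coalescence}) is \emph{deduced from} topological coalescence via Proposition \ref{continuty for pronilsystem factor map}, so you cannot invoke it here without an independent proof, which you do not sketch. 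A secondary gap is the final ``passing to the inverse limit'': a self-factor map of $\varprojlim Y_m$ does not in general descend to self-maps of the individual levels $Y_m$, so running the nilsystem argument ``on each finite level'' requires justification you do not give.

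For comparison, the paper's proof is a single, purely topological argument with no induction on $k$, no reduction to nilmanifold coordinates, and no measure theory. It uses Donoso's theorem that the Ellis semigroup $E=E(Y,T)$ of a minimal $k$-step pronilsystem is a $k$-step nilpotent group. Writing $\mathcal{G}(Y,z)$ for the stabilizer of $z$ in $E$ and taking $y=\Phi(x)$, one has $\mathcal{G}(Y,x)\subset\mathcal{G}(Y,y)$, and non-injectivity of $\Phi$ produces $\beta_0\in\mathcal{G}(Y,y)\setminus\mathcal{G}(Y,x)$. Choosing $u\in E$ with $ux=y$, so that $\mathcal{G}(Y,x)=u^{-1}\mathcal{G}(Y,y)u$, the commutators $\beta_{i+1}=(u^{-1}\beta_i^{-1}u)\beta_i=[u,\beta_i]$ stay in $\mathcal{G}(Y,y)\setminus\mathcal{G}(Y,x)$ while descending the lower central series, so $\beta_k\in E_{k+1}=\{\id\}$ gives a contradiction. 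If you want to salvage your route, you would need to prove the measurable (or at least ``vertical character'') injectivity statement directly from the Host--Kra structure theory, which is possible but is essentially a separate theorem.
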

\begin{proof}
Recall that the Ellis semigroup is defined as $E=E(Y,T)=\overline{\{T^n:n\in \mathbb{Z}\}}$, where the closure is w.r.t. the product topology on $Y^Y$ (see \cite{E58} for more details). By a theorem of Donoso \cite[Theorem 1.1]{DonosoEnveloping}, $E(Y,T)$ is a $k$-step nilpotent group, i.e. for $E_1=E$, $E_{i+1}=[E_{i},E], i\geq 1$, one has that $E_{k+1}=\{\id\}$. As $\Phi$ is continuous, one has that $E$ and $\Phi$ commute, i.e. for any $g\in E$, $\Phi\circ g=g\circ \Phi$. For any $z\in Y$, we define the group $\mathcal{G}(Y,z)=\{\alpha \in E(Y,T), \alpha z=z\}$. Let $x,y\in Y$ such that $\Phi(x)=y$. If $u\in \mathcal{G}(Y,x)$, one always has that $uy=u(\Phi(x))=\Phi(ux)=\Phi(x)=y$, i.e. $u\in \mathcal{G}(Y,y)$. Thus $\mathcal{G}(Y,x)\subset \mathcal{G}(Y,y)$.

Assume that $\Phi$ is not one-to-one, then there exists $x_1\neq x_2 \in Y$ such that $\Phi(x_1)=\Phi(x_2)$. As $(Y,T)$ is minimal, there exists $p_1,p_2\in E(Y,T)$ such that $x_1=p_1x$, $x_2=p_2x$. Then $p_1y=\Phi(p_1x)=\Phi(x_1)=\Phi(x_2)=\Phi(p_2x)=p_2y$. Thus $p_1^{-1}p_2\in \mathcal{G}(Y,y)$. As $p_2x=x_2\neq x_1=p_1 x$,  we have $$p_1^{-1}p_2x\neq x,$$ which implies that $p_1^{-1}p_2\in \mathcal{G}(Y,y)\setminus  \mathcal{G}(Y,x)$.

Let $\beta_0=p_1^{-1}p_2$. As $(Y,T)$ is minimal, there exists $u\in E(Y,T)$ such that $ux=y$. Then $\mathcal{G}(Y,x)=u^{-1}\mathcal{G}(Y,y)u$. Let $\beta_1=(u^{-1}\beta_0^{-1} u)\beta_0$. As $\beta_0\in \mathcal{G}(Y,y)\setminus  \mathcal{G}(Y,x)$, one has that 
\begin{equation}\label{beta0 property}\beta_0\notin \mathcal{G}(Y,x),\beta_0\in \mathcal{G}(Y,y)\text{ and }(u^{-1}\beta_0^{-1} u)\in u^{-1}\mathcal{G}(Y,y)u=\mathcal{G}(Y,x)\subset \mathcal{G}(Y,y).
\end{equation}
Thus we can show that $\beta_1\in \mathcal{G}(Y,y)\setminus  \mathcal{G}(Y,x)$. Indeed, by \eqref{beta0 property} we know that $\beta_1=(u^{-1}\beta_0^{-1}u)\beta_0\in  \mathcal{G}(Y,y)$ as $ \mathcal{G}(Y,y)$ is a group. If $\beta_1\in \mathcal{G}(Y,x)$, then $\beta_0=(u^{-1}\beta_0^{-1}u)^{-1}\beta_1\in \mathcal{G}(Y,x)$, which constitutes a contradiction. Therefore $\beta_1\in \mathcal{G}(Y,y)\setminus  \mathcal{G}(Y,x)$  and $(u^{-1}\beta_1^{-1}u)\in  u^{-1}\mathcal{G}(Y,y)u=\mathcal{G}(Y,x)$.

Similarly, we define $\beta_{i+1}=(u^{-1}\beta_{i}^{-1}u)\beta_i$ for $i\geq 1$. By the same argument, one has that $\beta_{i+1}\in \mathcal{G}(Y,y)\setminus  \mathcal{G}(Y,x)$. But notice that $ \beta_{i}\in E_{i+1}$ and $E_{k+1}=\{\id\}$, therefore $\id=\beta_k\in \mathcal{G}(Y,y)\setminus  \mathcal{G}(Y,x)$. Contradiction.

Thus $\Phi$ is a one-to-one topological factor map, which implies it is a topological isomorphism.
\end{proof}

\begin{prop}\cite[Chapter 13, Proposition 15]{HK2018}\label{continuty for pronilsystem factor map}
Let $(Y,\nu,T)$, $(Y',\nu',T)$ be minimal (uniquely ergodic) $k$-step pronilsystems. Let $\pi:(Y,\nu,T)\rightarrow (Y',\nu',T)$ be a measurable factor map. Then there exists a topological factor map $\hat{\pi}:(Y,T)\rightarrow (Y',T)$ such that $\pi(y)=\hat{\pi}(y)$ for $\nu$-a.e. $y$.
\end{prop}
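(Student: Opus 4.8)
The plan is to realize $\pi$ through its graph joining and then to exploit the algebraic rigidity of nilsystems to show that this graph is closed and projects homeomorphically onto $Y$. The one point to keep in mind throughout is that soft measure-theoretic arguments cannot suffice: being measurably isomorphic to a pronilsystem does \emph{not} force a topological model to be a topological pronilsystem (this is exactly the \cfnk phenomenon), so the concrete homogeneous-space structure must enter at some stage.

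First I would reduce to the case where the target is a single nilsystem. Writing $(Y',T)=\underleftarrow{\lim}(Y'_m,T)$ with bonding maps $q_m$ and projections $\rho_m:Y'\to Y'_m$, each $\pi_m:=\rho_m\circ\pi:(Y,\nu,T)\to(Y'_m,\nu'_m,T)$ is a measurable factor map onto a nilsystem. Granting the single-nilsystem case, each $\pi_m$ agrees $\nu$-a.e.\ with a topological factor map $\hat\pi_m$. Since $\operatorname{supp}\nu=Y$ (as $Y$ is minimal and uniquely ergodic, by Proposition \ref{prop:basic properties for pronilsystems}), a continuous map agreeing $\nu$-a.e.\ with $\pi_m$ is unique; hence $q_m\circ\hat\pi_m=\hat\pi_{m-1}$ for all $m$, and the $\hat\pi_m$ assemble into a topological factor map $\hat\pi:Y\to Y'$ with $\hat\pi=\pi$ a.e. Thus it suffices to treat $Y'=G'/\Gamma'$ a nilmanifold (with $Y$ still a pronilsystem, the only place inverse limits on the source side will reappear).

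Now consider the graph joining $\lambda=(\id\times\pi)_*\nu$ on $Y\times Y'$. Since $(Y\times Y',\lambda,T\times T)\cong(Y,\nu,T)$ via $y\mapsto(y,\pi(y))$, the measure $\lambda$ is ergodic, and it is an ergodic joining of two minimal $k$-step pronilsystems. Let $Z=\operatorname{supp}(\lambda)\subseteq Y\times Y'$, a closed $T\times T$-invariant set; the coordinate projections restrict to topological factor maps $p_1:Z\to Y$ and $p_2:Z\to Y'$, which are surjective because $p_{1*}\lambda=\nu$ and $p_{2*}\lambda=\nu'$ have full support. The entire problem now reduces to showing that $p_1$ is injective: once $p_1:Z\to Y$ is a continuous bijection of compact metric spaces it is a homeomorphism, and $\hat\pi:=p_2\circ p_1^{-1}:Y\to Y'$ is then a topological factor map with $\hat\pi=\pi$ $\nu$-a.e.

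The crux is precisely this injectivity, and it is where I expect the main obstacle to lie. I would invoke the algebraic rigidity of ergodic joinings of nilsystems: an ergodic joining of nilsystems is supported on a closed sub-nilmanifold $M\subseteq Y\times Y'$ and coincides with the Haar measure of $M$ (for the pronilsystem source this is obtained by applying the statement at each finite level $Y_n\times Y'$ and passing to the inverse limit). Consequently $Z=M$ is itself a nilmanifold and $p_1:Z\to Y$ is an algebraic, translation-equivariant factor map, whose fibres are mutually translated and therefore all of the same cardinality. Because $\lambda$ is a graph over $Y$, the fibre $p_1^{-1}(y)$ is a single point for $\nu$-a.e.\ $y$; homogeneity then upgrades this to \emph{every} fibre being a singleton, so $p_1$ is injective, completing the proof. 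I would finally remark that this Proposition is exactly the bridge from the topological coalescence of Theorem \ref{thm: top coalescence} to its measurable counterpart: a measurable self-factor of a minimal pronilsystem acquires a continuous version here, which Theorem \ref{thm: top coalescence} forces to be a topological, hence measurable, isomorphism.
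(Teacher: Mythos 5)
First, a point of calibration: the paper does not prove this proposition at all — it is imported verbatim as \cite[Chapter 13, Proposition 15]{HK2018} — so there is no in-paper argument to measure you against; your proposal has to stand on its own. Its architecture (reduce the target to a single nilmanifold, form the graph joining, invoke rigidity of invariant measures on nilmanifolds, read off injectivity of the first-coordinate projection on the support) is essentially the standard route behind the cited result. The reduction of the target to one nilmanifold is clean and correct: continuous versions at each level are unique because $\nu$ has full support (Proposition \ref{prop:basic properties for pronilsystems}), hence compatible, hence assemble. The rigidity statement you invoke — every ergodic invariant measure of a translation on a nilmanifold is the Haar measure of a closed orbit of a closed subgroup (Lesigne, Leibman; see \cite[Chapter 11]{HK2018}) — is a genuine theorem, though since it carries the entire weight of the argument it should be cited as such. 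Your closing remark about the proposition being the bridge from Theorem \ref{thm: top coalescence} to Theorem \ref{thm: meas coalescence} matches exactly how the paper deploys it.

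Two steps need more than you give. (a) From ``$\lambda$ is a graph over $Y$'' you conclude that $p_1^{-1}(y)$ is a singleton for $\nu$-a.e.\ $y$, but the graph property only says that the \emph{conditional measures} of $\lambda$ over $p_1$ are point masses a.e.; the topological fibre $p_1^{-1}(y)\cap Z$ could a priori be strictly larger than the support of the conditional. The algebraic structure does close this gap — for the Haar measure of a sub-nilmanifold $M$ the disintegration over an algebraic factor map has conditionals equal to Haar measures of the \emph{whole} topological fibres, hence fully supported in them, and a fully supported point mass forces the fibre to be a point — but this is precisely the step that must be articulated, since without it the argument shows nothing. (b) The parenthetical ``passing to the inverse limit'' on the source side hides the hardest part. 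Writing $Y=\underleftarrow{\lim}\,Y_n$ and $M_n=\operatorname{supp}\lambda_n\subset Y_n\times Y'$, the fibre of $p_1:Z\rightarrow Y$ over $y=(y_n)_n$ is the nested intersection $\bigcap_n F_n(y_n)$ of finite-level fibres; the homogeneity you invoke holds at each level $n$ separately, with translating elements that need not be chosen coherently in $n$, so it does not follow formally that these intersections have the same cardinality over different base points. You need either a genuine algebraic description of $Z\rightarrow Y$ as an inverse limit of algebraic factor maps with a compatible system of fibre groups, or a replacement for homogeneity — e.g.\ observe that $(Z,T\times T)$ is a minimal distal system (it is a subsystem of a product of distal systems, and ergodicity of $\lambda$ together with distality forces $\operatorname{supp}\lambda$ to be a single minimal set), that factor maps between minimal distal systems are open, so $y\mapsto p_1^{-1}(y)$ is continuous in the Vietoris topology, and that singleton fibres on the dense set supplied by (a) therefore propagate to every fibre. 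Either repair is genuinely needed; as written, the proof is a correct skeleton with its two load-bearing joints left implicit.
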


Combining Theorem \ref{thm: top coalescence} and Proposition \ref{continuty for pronilsystem factor map} we immediately have the following theorem. 

\begin{thm} (Measurable coalescence for minimal pronilsystems)\label{thm: meas coalescence} Let $(Y,\nu,T)$ be a minimal (uniquely ergodic) $k$-step pronilsystem. Then $(Y,\nu,T)$  is measurably coalescent, i.e. if $\pi:(Y,\nu,T)\rightarrow (Y,\nu,T)$ is a measurable factor map, then $\pi$ is a measurable  isomorphism (which equals a.s. a topological isomorphism).
\end{thm}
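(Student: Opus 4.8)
The plan is to deduce Theorem \ref{thm: meas coalescence} directly from the two results it combines, namely Theorem \ref{thm: top coalescence} (topological coalescence for minimal pronilsystems) and Proposition \ref{continuty for pronilsystem factor map} (rigidity: every measurable factor map between minimal $k$-step pronilsystems agrees a.e.\ with a topological factor map). The point is that the hard analytic work has already been carried out: Theorem \ref{thm: top coalescence} uses Donoso's theorem that the Ellis semigroup of a minimal $k$-step pronilsystem is a $k$-step nilpotent group, together with a commutator iteration argument, to show that any \emph{continuous} self-factor map is automatically injective. Proposition \ref{continuty for pronilsystem factor map} is the bridge allowing us to pass from the measurable to the topological category. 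So what remains is essentially a short formal argument assembling these pieces.

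First I would take an arbitrary measurable factor map $\pi:(Y,\nu,T)\rightarrow (Y,\nu,T)$. By Proposition \ref{continuty for pronilsystem factor map}, applied with $(Y',\nu',T)=(Y,\nu,T)$, there exists a topological factor map $\hat{\pi}:(Y,T)\rightarrow (Y,T)$ such that $\pi(y)=\hat{\pi}(y)$ for $\nu$-a.e.\ $y\in Y$. Now $\hat{\pi}$ is a continuous self-factor of the minimal $k$-step pronilsystem $(Y,T)$, so Theorem \ref{thm: top coalescence} applies and yields that $\hat{\pi}$ is a topological isomorphism; in particular $\hat{\pi}$ is a homeomorphism, hence bijective and bimeasurable, and it carries $\nu$ to $\nu$ since $\pi$ is measure preserving and $\pi=\hat{\pi}$ a.e.

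Finally I would conclude that $\pi$ is a measurable isomorphism. Since $\pi$ agrees with the topological isomorphism $\hat{\pi}$ on a set of full measure, $\pi$ is invertible on a set of full measure with measurable inverse (namely $\hat{\pi}^{-1}$ restricted to the appropriate full-measure set), so it is a measurable isomorphism, and it equals a.s.\ the topological isomorphism $\hat{\pi}$, exactly as asserted in the statement. I do not anticipate any genuine obstacle here: the substance lives entirely in Theorem \ref{thm: top coalescence} and Proposition \ref{continuty for pronilsystem factor map}, and the only care needed is the bookkeeping to ensure that the full-measure identification $\pi=\hat{\pi}$ transfers the measure-preserving and invertibility properties cleanly between the two maps. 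This is why the authors phrase it as an immediate consequence obtained by "combining" the previous two results.
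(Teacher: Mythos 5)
Your proposal is correct and follows essentially the same route as the paper: apply Proposition \ref{continuty for pronilsystem factor map} to replace $\pi$ a.e.\ by a topological factor map $\hat{\pi}$, invoke Theorem \ref{thm: top coalescence} to see $\hat{\pi}$ is a topological isomorphism, and transfer invertibility back to $\pi$ on a $T$-invariant full-measure set. The only detail the paper spells out that you gloss over is the choice of a $T$-invariant Borel set $Y_0$ of full measure on which $\pi=\hat{\pi}$, so that $\pi_{|Y_0}$ is injective with image of full measure; this is exactly the bookkeeping you anticipated.
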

\begin{proof}
By Proposition \ref{continuty for pronilsystem factor map}, there exists a topological factor map $\hat{\pi}:(Y,\nu,T)\rightarrow (Y,\nu,T)$ such that $\pi(y)=\hat{\pi}(y)$ for $\nu$-a.e. $y\in Y$. By Theorem \ref{thm: top coalescence}, $\hat{\pi}$ is a topological isomorphism. As $\pi$ equals a.s. $\hat{\pi}$, one may find a $T$-invariant Borel set $Y_0\subset Y$ with $\nu(Y_0)=1$, $\pi_{|Y_0}=\hat{\pi}_{|Y_0} $. As $\hat{\pi}$ is one-to-one, $\pi_{|Y_0}^{-1}(\pi_{|Y_0}(Y_0))=Y_0$ and therefore $\nu(\pi_{|Y_0}(Y_0))=1 $. Thus 
$\pi_{|Y_0}:Y_0\rightarrow \hat{\pi}(Y_0)$ is a Borel measurable one-to-one map between two $T$-invariant sets of full measure, which implies that $\pi$ is a measurable  isomorphism. 
\end{proof}

\begin{cor}\label{cor:strong max}
\sloppy Let $(X,\mathcal{X},\mu,T)$ be an ergodic m.p.s. and $k\in \mathbb{N}$. Let $(Y,\mathcal{Y},\nu,S)$ be a minimal $k$-step pronilsystem isomorphic to $(Z_k(X),\mathcal{Z}_k(X),\mu_k,T)$. Let $\pi:(X,\mathcal{X},\mu,T)\rightarrow (Y,\mathcal{Y},\nu,S)$ be a factor map. 
 The following holds:
 \begin{enumerate}
     \item 
 There is a (topological) isomorphism $p~:~(Z_k(X),\mathcal{Z}_k(X),\mu_k,T)\rightarrow (Y,\mathcal{Y},\nu,S)$ such that $\pi=p\circ \pi_k$ a.s.
 \item
 For every measurable factor map $\phi:(X,\mathcal{X},\mu,T)\rightarrow (Y',\mathcal{Y}',\nu',S')$ where $(Y',\mathcal{Y}',\nu',S')$ is a minimal $k$-step pronilfactor, factors through $\pi$,  i.e., there exists a unique (up to measure zero) $\psi:(Y,\mathcal{Y},\nu,S)\rightarrow  (Y',\mathcal{Y}',\nu',S')$ such that $\phi=\psi\circ \pi$ a.s.
\end{enumerate}

$$
\xymatrix@R=0.7cm{
  & X\ar[dl]_{\pi_{k}}\ar[d]^{\pi}\ar[dr]^{\phi} &\\
  Z_k\ar[r]^{p}& \ar[l]Y\ar[r]^{\psi} &Y' 
  }
$$
\end{cor}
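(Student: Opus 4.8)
The plan is to derive both parts from the maximality property of the Host--Kra factor recorded in Remark \ref{rem:classical Z}, combined with the coalescence results established earlier in this section.

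For part (1), I would first invoke this maximality property. Since $(Y,\mathcal{Y},\nu,S)$ is a minimal $k$-step pronilsystem and $\pi:(X,\mathcal{X},\mu,T)\rightarrow (Y,\mathcal{Y},\nu,S)$ is a measurable factor map, Remark \ref{rem:classical Z} produces a measurable factor map $\psi_0:(Z_k(X),\mathcal{Z}_k(X),\mu_k,T)\rightarrow (Y,\mathcal{Y},\nu,S)$ with $\pi=\psi_0\circ\pi_k$ a.s. Let $\Theta$ denote the given measurable isomorphism $(Z_k(X),\mathcal{Z}_k(X),\mu_k,T)\rightarrow (Y,\mathcal{Y},\nu,S)$. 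Then $\Theta^{-1}\circ\psi_0$ is a measurable endomorphism of $(Z_k(X),\mathcal{Z}_k(X),\mu_k,T)$. Because $(Z_k(X),\mathcal{Z}_k(X),\mu_k,T)$ is measurably isomorphic to a minimal $k$-step pronilsystem (Theorem \ref{thm:meas order k}), and measurable coalescence passes to measurably isomorphic systems, Theorem \ref{thm: meas coalescence} forces $\Theta^{-1}\circ\psi_0$ to be a measurable isomorphism; hence so is $\psi_0=\Theta\circ(\Theta^{-1}\circ\psi_0)$. Setting $p=\psi_0$ gives a measurable isomorphism with $\pi=p\circ\pi_k$ a.s. To promote $p$ to a \emph{topological} isomorphism, I would fix topological pronilmodels of $Z_k(X)$ and $Y$ and apply Proposition \ref{continuty for pronilsystem factor map} to both $p$ and $p^{-1}$, obtaining topological factor maps $\tilde p$ and $\tilde q$ agreeing a.e. with $p$ and $p^{-1}$. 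Then $\tilde q\circ\tilde p$ and $\tilde p\circ\tilde q$ are topological endomorphisms agreeing a.e.---hence, by minimality and unique ergodicity, on a dense set---with the identity, so continuity makes them equal to the identity and $\tilde p$ is the desired topological isomorphism (alternatively one may invoke topological coalescence, Theorem \ref{thm: top coalescence}).

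For part (2), I would again apply Remark \ref{rem:classical Z}: given a measurable factor map $\phi:(X,\mathcal{X},\mu,T)\rightarrow (Y',\mathcal{Y}',\nu',S')$ onto a minimal $k$-step pronilfactor, there is a measurable $\psi':(Z_k(X),\mathcal{Z}_k(X),\mu_k,T)\rightarrow (Y',\mathcal{Y}',\nu',S')$ with $\phi=\psi'\circ\pi_k$ a.s. Substituting $\pi_k=p^{-1}\circ\pi$ from part (1) and setting $\psi=\psi'\circ p^{-1}:(Y,\mathcal{Y},\nu,S)\rightarrow (Y',\mathcal{Y}',\nu',S')$ yields $\phi=\psi\circ\pi$ a.s. Uniqueness is immediate: if $\psi_1\circ\pi=\psi_2\circ\pi$ a.s., then pushing forward by $\pi$ and using $\pi_*\mu=\nu$ gives $\psi_1=\psi_2$ $\nu$-a.e.

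The only genuine subtlety is the passage between the measurable and topological categories for $p$---namely, justifying that a measurable isomorphism between minimal $k$-step pronilsystems is realized by an honest homeomorphism. This is precisely where the coalescence theorems of this section, funneled through Proposition \ref{continuty for pronilsystem factor map}, are indispensable; the remainder is a formal diagram chase resting on the maximality of $(Z_k(X),\mathcal{Z}_k(X),\mu_k,T)$.
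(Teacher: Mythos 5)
Your proposal is correct and follows essentially the same route as the paper: both obtain $p$ from the maximality property of $\pi_k$ (Remark \ref{rem:classical Z}), compose with the given isomorphism to get an endomorphism of a minimal $k$-step pronilsystem, and conclude via measurable coalescence (Theorem \ref{thm: meas coalescence}) that $p$ is an isomorphism, with part (2) then following because $\pi$ inherits the maximality of $\pi_k$. Your extra care in promoting $p$ to a topological isomorphism via Proposition \ref{continuty for pronilsystem factor map} is exactly the content the paper compresses into its parenthetical "(which equals a.s. a topological isomorphism)."
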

\begin{proof}
By the maximality of $\pi_k$ (see Subsection \ref{subsec:classical meas max}) there is a measurable factor map $p:(Z_k(X),\mathcal{Z}_k(X),\mu_k,T)\rightarrow (Y,\mathcal{Y},\nu,S)$ such that  $\pi=p\circ \pi_k$ a.s.
By assumption there is a measurable isomorphism $i:(Y,\mathcal{Y},\nu,S)\rightarrow(Z_k(X),\mathcal{Z}_k(X),\mu_k,T)$ (which equals a.s. a topological isomorphism). By Theorem \ref{thm: meas coalescence}, $i \circ p$ is a measurable isomorphism and therefore $p$ is a measurable isomorphism. This establishes (1). Thus $\pi$ inherits the maximality property of $\pi_k$. This establishes (2).
\end{proof}

\begin{rem}
 Bernard Host has pointed out to us that it is possible to prove Theorem \ref{thm:coalescense} using results from \cite[Chapter 13]{HK2018}.
\end{rem}

\subsection{Universality}\label{subsec:Universality}

\begin{defn}
Let $(X,\mu,T)$ be a strictly ergodic t.d.s. Denote by $\cktop$ the collection of (topological) isomorphism equivalence classes of topological $k$-step pronilfactors of $(X,T)$. Denote by $\ckme$ the collection of (measurable) isomorphism equivalence classes of measurable $k$-step pronilfactors of $(X,T)$. An (equivalence class of) t.d.s. $(M,T)\in \cktop$ is called $\cktop$-\textbf{universal}\footnote{This terminology is frequently used in the literature, see \cite{dV93,GutLi2013}.} if every $(N,S)\in\cktop$ is a topological factor of $(M,T)$. An (equivalence class of) m.p.s. $(M,\mathcal{M},\mu,T)\in \ckme$ is called $\ckme$-\textbf{universal} if every $(N,\mathcal{N},v,S)\in\ckme$ is a measurable factor of $(M,\mathcal{M},\mu,T)$.
\end{defn}

The following theorem establishes a complementary property to maximality as described in                                  Remark \ref{rem:classical Z} and Remark \ref{rem:classical top max}.

\begin{thm}
Let $(X,\mu,T)$ be a strictly ergodic t.d.s., then $(W_k(X),T)$ is the unique $\cktop$-universal topological $k$-step pronilfactor of $(X,T)$ and $(Z_k(X),\mathcal{Z}_k(X),\mu_k,T)$ is the unique $\ckme$-universal measurable $k$-step pronilfactor of $(X,T)$.
\end{thm}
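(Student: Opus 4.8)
The plan is to treat the topological and measurable assertions in parallel, since each reduces to combining the maximality of the canonical projection with the relevant coalescence theorem. Each assertion naturally splits into three pieces: (a) that $(W_k(X),T)$ (resp. $(Z_k(X),\mathcal{Z}_k(X),\mu_k,T)$) actually belongs to $\cktop$ (resp. $\ckme$); (b) that it is universal; and (c) that universality pins it down up to isomorphism.

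For piece (a) I would observe that by Definition \ref{def:max nilfactor} the system $(W_k(X),T)$ is a topological system of order $k$, hence isomorphic as a t.d.s. to a minimal $k$-step pronilsystem, and it is by construction a topological factor of $(X,T)$; thus $(W_k(X),T)\in\cktop$. Analogously $(Z_k(X),\mathcal{Z}_k(X),\mu_k,T)$ is a measurable system of order $k$, so by Theorem \ref{thm:meas order k} it is isomorphic as an m.p.s. to a minimal $k$-step pronilsystem, and it is a measurable factor of $(X,\mathcal{X},\mu,T)$; hence it lies in $\ckme$.

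For piece (b), given any $(N,S)\in\cktop$, I would note that $N$ is minimal (being a factor of the minimal system $X$) and isomorphic to a $k$-step pronilsystem, hence a topological system of order $k$. By the maximality of $\pitk$ recorded in Remark \ref{rem:classical top max}, the defining factor map $(X,T)\to(N,S)$ factors through $\pitk:(X,T)\to(W_k(X),T)$, producing a topological factor map $(W_k(X),T)\to(N,S)$. This shows $(W_k(X),T)$ is $\cktop$-universal. The measurable statement is verbatim the same, using that a measurable $k$-step pronilfactor is ergodic (hence a system of order $k$) and invoking the maximality of $\pi_k$ from Remark \ref{rem:classical Z}, or equivalently Corollary \ref{cor:strong max}(2). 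For piece (c), suppose $(M,T)$ is any $\cktop$-universal topological $k$-step pronilfactor. Applying universality of $M$ to $W_k(X)\in\cktop$ and universality of $W_k(X)$ to $M\in\cktop$ yields topological factor maps $f:(M,T)\to(W_k(X),T)$ and $g:(W_k(X),T)\to(M,T)$. The composite $f\circ g:(W_k(X),T)\to(W_k(X),T)$ is a topological factor map, so by topological coalescence for minimal pronilsystems (Theorem \ref{thm: top coalescence}) it is a topological isomorphism; consequently $g$ is injective, and being surjective it is itself an isomorphism, so $(M,T)\cong(W_k(X),T)$. Measurable uniqueness is the identical argument with measurable coalescence (Theorem \ref{thm: meas coalescence}) replacing the topological one.

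The whole argument is an assembly of results already in hand; the only points that require care are checking the minimality/ergodicity hypotheses so that the structure theorems and the coalescence theorems apply, and these hold automatically because factors of minimal (resp. ergodic) systems are minimal (resp. ergodic). The genuinely substantial input is the coalescence of minimal pronilsystems, which is precisely what forces the two universal objects to be isomorphic; the existence half of universality is essentially a restatement of the maximality property of $\pitk$ and $\pi_k$.
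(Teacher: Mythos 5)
Your proof is correct and follows essentially the same route as the paper's: universality is extracted from the maximality properties of $\pi_k^{top}$ and $\pi_k$ (Remarks \ref{rem:classical top max} and \ref{rem:classical Z}), and uniqueness is obtained by composing the two factor maps coming from mutual universality and invoking the coalescence theorems (Theorems \ref{thm: top coalescence} and \ref{thm: meas coalescence}). The only difference is that you spell out the membership check and the injectivity-from-coalescence step, which the paper leaves implicit.
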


\begin{proof}

By Remark \ref{rem:classical Z} $(Z_k(X),\mathcal{Z}_k(X),\mu_k,T)$ is a $\ckme$-universal measurable $k$-step pronilfactor of $(X,T)$. Assume  $(Z'_k(X),\mathcal{Z}'_k(X),\mu'_k,T)$ is another $\ckme$-universal measurable $k$-step pronilfactor of $(X,T)$. By universality one has measurable factor maps $Z'_k(X)\rightarrow \mathcal{Z}'_k(X)$ and $Z_k(X)\rightarrow \mathcal{Z}'_k(X)$. 
By  Theorem \ref{thm: meas coalescence},  $Z_k(X)$ and $\mathcal{Z}'_k(X)$ are isomorphic.

By Remark \ref{rem:classical top max} $(W_k(X),T)$ is a $\cktop$-universal topological $k$-step pronilfactor of $(X,T)$. By Theorem \ref{thm: top coalescence} it is unique.

\end{proof}

\begin{proof}[Proof of Proposition \ref{pronilfactor maximal}]
By Remark \ref{rem:classical top max}, one can find a topological factor map $q:(W_k(X),T)\rightarrow (\hat{Z}_k,T)$. Let $\omega_k$ be the unique ergodic measure of $(W_k(X),T)$. By Remark \ref{rem:classical Z}, one can find a measurable factor map $\psi:(\hat{Z}_k,\gamma_k, T)\rightarrow (W_k(X),\omega_k,T)$. $$
\xymatrix@R=0.7cm{
\hat{Z}_k\ar@{.>}[r]^{\psi} & W_k  \ar@/^/[l]^{q}  }
$$
By Proposition \ref{continuty for pronilsystem factor map}, there exists a topological factor map $\hat{\psi}:(\hat{Z}_k,\gamma_k, T)\rightarrow (W_k(X),\omega_k,T)$ such that $\hat{\psi}=\psi$ a.s. In particular, $\hat{\psi}\circ q:(W_k(X),\omega_k,T)\rightarrow (W_k(X),\omega_k,T)$ is a topological factor map. By Theorem \ref{thm: top coalescence}, $\hat{\psi}\circ q$ is a topological isomorphism. Thus $q$ is a topological isomorphism. As $(\hat{Z}_k,T)$ and $(W_k,T)$ are uniquely ergodic, $q$ is also a measurable isomorphism. In particular $(W_k(X),\mathcal{W}_k(X),\omega_k,T)$ and $(Z_k(X),\mathcal{Z}_k(X),\mu_k,T)$ are isomorphic as m.p.s. and $(X,\mu,T)$ is \cfnkp
\end{proof}

\section{Cubespace characterization of \cfnkp}\label{sec:cubespace}

In this section, we prove Theorem \ref{thm:equiv cube u.e}. We need some lemmas.
\begin{lem}\cite[Lemma 5.6]{HKM10}\label{prop:full support on Ck1}
Let $(X,T)$ be a minimal topological dynamical system and $\mu$ be an invariant ergodic measure on $X$. Then the measure $\mu^{[k]}$ is supported on $\C_{\T}^k(X)$  for any $k\geq 1$.
\end{lem}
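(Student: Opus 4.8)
The plan is to prove the statement by induction on $k$, rephrasing the claim as $\operatorname{supp}(\mu^{[k]})\subseteq \C_{\T}^{k}(X)$. Since $\mu^{[k]}$ is $\mathcal{HK}^{k}(T)$-invariant and $\C_{\T}^{k}(X)$ is by definition a closed $\mathcal{HK}^{k}(T)$-invariant set, it suffices to show that $\mu^{[k]}$-almost every point lies in $\C_{\T}^{k}(X)$. For the base case $k=1$ one has $\mu^{[1]}=\mu\times\mu$, and since $(X,T)$ is minimal $\operatorname{supp}(\mu)=X$, so $\operatorname{supp}(\mu^{[1]})=X\times X=X^{[1]}$; a direct computation with $\mathcal{HK}^{1}(T)=\langle T\times T,\ \id\times T\rangle$ shows that the $\mathcal{HK}^{1}(T)$-orbit of any diagonal point $x^{[1]}$ is dense in $X\times X$, so in fact $\C_{\T}^{1}(X)=X^{[1]}$ and the case is immediate.

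For the inductive step I would split the coordinates of $X^{[k+1]}$ according to the value of the last digit, writing $X^{[k+1]}=X^{[k]}\times X^{[k]}$ with a typical point denoted $(\mathbf{y}',\mathbf{y}'')$. Under this identification the generators of $\mathcal{HK}^{k+1}(T)$ become: $f\times f$ for $f\in\mathcal{F}^{k}(T)$ (coming from the faces $\overline{\alpha}_j$, $j\le k$), the diagonal $T^{[k+1]}=T^{[k]}\times T^{[k]}$, and the last face map $T^{\overline{\alpha}_{k+1}}=\id\times T^{[k]}$. Consequently $\mathcal{HK}^{k+1}(T)$ contains the diagonally embedded group $\mathcal{HK}^{k}(T)$ together with the single extra element $\id\times T^{[k]}$. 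Applying the diagonal $\mathcal{HK}^{k}(T)$ to the base point $x^{[k+1]}=(x^{[k]},x^{[k]})$ and taking closures shows that the diagonal copy $\{(\mathbf{y},\mathbf{y}):\mathbf{y}\in\C_{\T}^{k}(X)\}$ is contained in $\C_{\T}^{k+1}(X)$; by the inductive hypothesis this gives $(\mathbf{y}',\mathbf{y}')\in\C_{\T}^{k+1}(X)$ for $\mu^{[k]}$-a.e.\ $\mathbf{y}'$.

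The remaining input is the description of $\mu^{[k+1]}$ as a relatively independent joining: disintegrating $\mu^{[k]}=\int\lambda_\omega\,dP(\omega)$ into its $T^{[k]}$-ergodic components, which is exactly the disintegration over the invariant $\sigma$-algebra $\mathcal{I}_{T^{[k]}}$, yields $\mu^{[k+1]}=\int\lambda_\omega\times\lambda_\omega\,dP(\omega)$, so $\mu^{[k+1]}$-a.e.\ point $(\mathbf{y}',\mathbf{y}'')$ has both coordinates in $\operatorname{supp}(\lambda_\omega)$ for a common $\omega$. Since $\lambda_\omega$ is $T^{[k]}$-ergodic, for $\lambda_\omega$-a.e.\ $\mathbf{y}'$ the forward $T^{[k]}$-orbit closure of $\mathbf{y}'$ contains $\operatorname{supp}(\lambda_\omega)$, hence contains $\lambda_\omega$-a.e.\ $\mathbf{y}''$; thus $\mathbf{y}''=\lim_i (T^{[k]})^{n_i}\mathbf{y}'$ for suitable $n_i$. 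Writing $(\mathbf{y}',(T^{[k]})^{n_i}\mathbf{y}')=(\id\times T^{[k]})^{n_i}(\mathbf{y}',\mathbf{y}')$ and using that $(\mathbf{y}',\mathbf{y}')\in\C_{\T}^{k+1}(X)$ by the previous paragraph, that $\id\times T^{[k]}\in\mathcal{HK}^{k+1}(T)$, and that $\C_{\T}^{k+1}(X)$ is closed, we conclude $(\mathbf{y}',\mathbf{y}'')\in\C_{\T}^{k+1}(X)$ for $\mu^{[k+1]}$-a.e.\ point, completing the induction. I expect the main obstacle to be the genericity step of this last paragraph, namely justifying that almost every pair consists of two points one of which lies in the forward orbit closure of the other inside a common ergodic component, which is where the ergodic decomposition and the pointwise ergodic theorem for the $\mathbb{Z}$-action $T^{[k]}$ are essential.
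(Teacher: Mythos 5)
Your proof is correct, but it takes a genuinely different route from the paper, which in fact gives no argument of its own here: it simply cites \cite[Lemma 5.6]{HKM10}, whose proof realizes $\mu^{[k]}$ as a weak-$*$ limit of uniform measures on finite cube configurations and therefore rests on the $L^2$-convergence of cubical averages \cite[Theorem 1.2]{HK05}; the authors explicitly note this dependence and say that a direct proof is possible and will appear elsewhere. Your induction is exactly such a direct proof, and every step checks out: the base case $\C_{\T}^{1}(X)=X\times X$ by minimality; the identification $\mu^{[k+1]}=\int\lambda_\omega\times\lambda_\omega\,dP(\omega)$, which is just the fact that disintegrating $\mu^{[k]}$ over $\mathcal{I}_{T^{[k]}}$ agrees a.e.\ with the ergodic decomposition; the Birkhoff theorem applied to a countable base of open sets, giving that $\lambda_\omega$-a.e.\ point has forward $T^{[k]}$-orbit closure containing $\operatorname{supp}(\lambda_\omega)$; and the two combinatorial observations that $g\mapsto g\times g$ embeds $\mathcal{HK}^{k}(T)$ into $\mathcal{HK}^{k+1}(T)$ (so the diagonal of $\C_{\T}^{k}(X)$ sits inside $\C_{\T}^{k+1}(X)$) and that $T^{\overline{\alpha}_{k+1}}=\id\times T^{[k]}$ lets you slide the upper half of the cube along the orbit of the lower half. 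The only points worth writing out in full are the measurability of the set of points generic for their own ergodic component and the Fubini step showing the good set has full $\lambda_\omega\times\lambda_\omega$-measure for $P$-a.e.\ $\omega$; both are routine. What your route buys is complete independence from the deep mean-convergence theorem (so the lemma becomes an elementary consequence of the definition of $\mu^{[k]}$ together with the pointwise ergodic theorem, and the method is available in settings where mean convergence of cubical averages is not known); what the route of \cite{HKM10} buys is the convergence of the cubical averages themselves, which is needed elsewhere in that paper but not for this statement.
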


\begin{proof}
The Lemma is proven in \cite[Lemma 5.6]{HKM10} with the help of the so called $L^2$-convergence of cubical averages theorem \cite[Theorem 1.2]{HK05}. This is a deep theorem with a highly non-trivial proof. We note that we are able to give a direct proof of this Lemma which we hope to publish elsewhere. 
\end{proof}

\begin{defn}\label{Folner sequence}
Let $G$ be a countable amenable group. A \textbf{Følner sequence} $\{F_{N}\}_{N\in \mathbb{N}}$ is a sequence of finite subsets of $G$ such that for any $g\in G$, $\lim_{n\rightarrow \infty}|gF_{N}\cap F_{N}|/|F_{N}|=1$. 

\end{defn}

\begin{thm}\label{pointwise ergodic theorem for amenable}(Lindenstrauss)
Let $G$ be an amenable group acting on a measure space $(X,\mathcal{X},\mu)$ by measure preserving transformations.
Let $\mathcal{I}_G$ be the $G$-invariant
$\sigma$-algebra of $(X,\mathcal{X},\mu)$.
There is a Følner sequence $\{F_{N}\}_{N\in \mathbb{N}}$ such that for any $f\in L^\infty(\mu)$, for $\mu$-a.e. $x\in X$, 
$${\displaystyle \lim_{N\rightarrow\infty}\frac{1}{|F_{N}|}\sum_{g\in F_{N}}}f(gx)=\mathbb{E}(f|\mathcal{I}_G)(x), 
$$
In particular, if the $G$ action is ergodic, for $\mu$-a.e. $x\in X$, 
$${\displaystyle \lim_{N\rightarrow\infty}\frac{1}{|F_{N}|}\sum_{g\in F_{N}}}f(gx)=\int f(x)d\mu\text{ a.e.}$$
\end{thm}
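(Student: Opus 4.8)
The plan is to take for $\{F_N\}$ a \emph{tempered} Følner sequence and to deduce the pointwise statement from two separate inputs: a weak-type maximal inequality for the averaging operators and a.e.\ convergence on a dense subclass of $L^1(\mu)$. Recall that a Følner sequence is called tempered if there is a constant $C>0$ with $\big|\bigcup_{k<N}F_k^{-1}F_N\big|\le C|F_N|$ for every $N$; since every Følner sequence admits a tempered subsequence, this is exactly why the theorem asserts the mere \emph{existence} of a good sequence rather than a statement valid for all of them. Throughout write $A_Nf(x)=\frac{1}{|F_N|}\sum_{g\in F_N}f(gx)$.

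First I would dispose of the two soft ingredients. By the mean ergodic theorem for amenable group actions, $A_Nf\to\mathbb{E}(f|\mathcal{I}_G)$ in $L^2(\mu)$ for every $f\in L^2(\mu)$ and every Følner sequence; consequently, whenever $A_Nf$ converges $\mu$-a.e.\ its limit must agree a.e.\ with $\mathbb{E}(f|\mathcal{I}_G)$. To locate a dense class on which a.e.\ convergence is elementary, I would use the splitting $L^2(\mu)=\mathcal{I}\oplus\overline{B}$, where $\mathcal{I}$ is the space of $G$-invariant functions and $\overline{B}$ is the closed span of the coboundaries $h(g_0\,\cdot\,)-h$ with $g_0\in G$ and $h\in L^\infty(\mu)$. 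On $\mathcal{I}$ one has $A_Nf=f=\mathbb{E}(f|\mathcal{I}_G)$, while for a coboundary a telescoping of the sum together with the Følner property yields $\|A_N(h(g_0\,\cdot\,)-h)\|_\infty\le\frac{|g_0F_N\triangle F_N|}{|F_N|}\,\|h\|_\infty\to 0$, so convergence holds there uniformly.

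The decisive ingredient is the weak-type maximal inequality
\[
\mu\Big\{x:\sup_N A_N|f|(x)>\lambda\Big\}\le\frac{C}{\lambda}\,\|f\|_{L^1(\mu)}\qquad(f\in L^1(\mu),\ \lambda>0),
\]
with $C$ depending only on the temperedness constant. Granting it, the Banach principle shows that $\{f\in L^1(\mu): A_Nf\ \text{converges }\mu\text{-a.e.}\}$ is closed in $L^1(\mu)$; it contains the dense class of the previous paragraph, hence equals $L^1(\mu)$, and by the mean ergodic theorem the limit is $\mathbb{E}(f|\mathcal{I}_G)$. The ergodic case is then the special instance in which $\mathcal{I}_G$ is trivial mod $\mu$, so that $\mathbb{E}(f|\mathcal{I}_G)=\int f\,d\mu$.

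The hard part will be the maximal inequality, and this is precisely where the temperedness hypothesis is indispensable. Through a transference argument one reduces it to a purely combinatorial covering statement for the windows $F_N$ inside $G$: subsets of the group must be covered by translates of the $F_N$ with multiplicity bounded by an absolute constant. For a general amenable group the windows are neither nested nor comparable, so a deterministic Vitali-type greedy selection cannot control the overlap. Lindenstrauss's device, which I would follow, is a \emph{probabilistic} covering lemma: the candidate windows are examined in a uniformly random order, and one shows that the expected multiplicity of the extracted subcover is $O(1)$, the temperedness bound entering exactly in this expectation estimate. Carrying out this covering lemma and the transference step that converts it into the displayed operator inequality is the substantial work; everything else above is routine.
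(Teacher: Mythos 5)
Your proposal is a correct outline of the standard proof of Lindenstrauss's pointwise ergodic theorem, but it takes a genuinely different route from the paper, whose entire ``proof'' consists of citing Lindenstrauss's Theorem 1.2 (a.e.\ convergence of the averages along a tempered F{\o}lner sequence to some invariant $\overline{f}$) and Proposition 1.4 (every F{\o}lner sequence has a tempered subsequence), followed by a short argument identifying $\overline{f}$ with $\mathbb{E}(f|\mathcal{I}_G)$. Even that last identification is done differently: the authors apply the already-established a.e.\ convergence to the invariant function $\mathbb{E}(f|\mathcal{I}_G)$ itself and then use dominated convergence for conditional expectations to get $\overline{f}=\mathbb{E}(\overline{f}|\mathcal{I}_G)=\mathbb{E}(f|\mathcal{I}_G)$, whereas you deduce it from $L^2$-mean convergence (a.e.\ limit must agree with the $L^2$ limit along a subsequence). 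Both identifications are valid, and yours is arguably more economical since you need the mean ergodic theorem anyway for the dense-class step. Your decomposition into invariant functions plus coboundaries, the telescoping estimate $\|A_N(h(g_0\,\cdot)-h)\|_\infty\le |g_0F_N\triangle F_N|\,\|h\|_\infty/|F_N|\to 0$, and the Banach-principle reduction to the weak-type $(1,1)$ maximal inequality are all correct. The one point to flag is that the probabilistic covering lemma and the transference argument --- which constitute essentially all of the analytic content of the theorem --- are acknowledged but not executed in your write-up; since the paper delegates exactly this content to the citation, this is not a defect relative to the paper, but your text should not be mistaken for a self-contained proof of the maximal inequality.
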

\begin{proof}
The theorem follows from \cite[Theorem 1.2]{L01} and \cite[Proposition 1.4]{L01}. In  \cite[Theorem 1.2]{L01} the statement reads 
\begin{equation}\label{eq:Lin}
 \lim_{N\rightarrow\infty}\frac{1}{|F_{N}|}\sum_{g\in F_{N}}f(gx)=\overline{f}(x)\text{ a.e.}
\end{equation}
for some $G$-invariant $\overline{f}\in L^\infty(\mu)$. 

Note that if we replace $f$ by $\mathbb{E}(f|\mathcal{I}_G)$ in \eqref{eq:Lin}, we have trivially as $\mathbb{E}(f|\mathcal{I}_G)$ is $G$-invariant: 
$$
\mathbb{E}(f|\mathcal{I}_G)(x)=\lim_{N\rightarrow\infty}\frac{1}{|F_{N}|}\sum_{g\in F_{N}}\mathbb{E}(f|\mathcal{I}_G)(gx)
$$
Using the Lebesgue dominated convergence theorem for conditional expectation\footnote{It follows easily from applying the Lebesgue dominated convergence theorem in Equation \eqref{eq:cond exp}.}  one has:
 $$
 \mathbb{E}(f|\mathcal{I}_G)(x)=\lim_{N\rightarrow\infty}\mathbb{E}(\frac{1}{|F_{N}|}\sum_{g\in F_{N}}f(g\cdot)|\mathcal{I}_G)(x)=\mathbb{E}(\overline{f}|\mathcal{I}_G)(x)=\overline{f}(x)\text{ a.e.}
 $$
Thus $\overline{f}(x)=\mathbb{E}(f|\mathcal{I}_G)(x)$, which gives the statement above.

\end{proof}

\begin{proof}[Proof of Theorem \ref{thm:equiv cube u.e}]
(I) $\Rightarrow$ (II): This follows from the proof in \cite[Section 4.4.3]{HSY2017}, where it is shown  that if one has a commutative diagram of the following form:
\[
\begin{CD}
(X,\mathcal{X},\mu,T) @>{\phi}>> (\h{X},T)\\
@V{\pi_k}VV      @VV{\h{\pi}_k}V\\
(Z_k(X),\mathcal{Z}_k(X),\mu_k,T) @>{\id }>> (Z_k(X),T),
\end{CD}
\]
 then $(C^{k+1}_T(\hat{X}),\mathcal{HK}^{k+1}(T)) $ is uniquely ergodic. Here $(X,\mathcal{X},\mu,T)$ is an ergodic system, $(\hat{X},T)$ is strictly ergodic, $\phi$ is a measurable isomorphism w.r.t. the uniquely ergodic measure of $(\hat{X},T)$ and $\hat{\pi}_k$ is a topological factor map. Indeed, it is easy to obtain such a diagram for a \cfnk system using Proposition \ref{pronilfactor maximal}.

(II) $\Rightarrow$ (I): We assume that $(\C_{\T}^{k+1}(X),\mathcal{HK}^{k+1}(T))$ is uniquely ergodic. By Lemma \ref{prop:full support on Ck1}, the unique invariant measure is $\mu^{[k+1]}$. As $(X,T)$ is a topological factor of $(\C_{\T}^{k+1}(X),\mathcal{HK}^{k+1}(T))$ w.r.t. the projection to the first coordinate, $(X,T)$ is uniquely ergodic.

Let $p_k:(X,T)\rightarrow (W_k(X),T)$ be the topological canonical $k$-th projection. By Proposition \ref{prop:factor of ue}, as $(X,T)$ is uniquely ergodic so is $(W_k(X),T)$. Let us denote by $\omega_k$ the unique invariant measure of $(W_k(X),T)$. Obviously $(p_k)_*\mu=\omega_k$. Thus  $p_k:(X,\mu,T)\rightarrow (W_k(X),\omega_k,T)$ is a measurable factor map.
Let $\mathcal{W}_k$ be the $\sigma$-algebra corresponding to the map $p_k$. Let $\mathcal{Z}_k$ be the $\sigma$-algebra corresponding to the measurable canonical $k$-th projection $\pi_k:(X,\mu,T)\rightarrow (Z_k(X),\mathcal{Z}_k(X),\mu_k,T)$. We will show that $\mathcal{W}_k=\mathcal{Z}_k$, which implies that $(W_k(X),\omega_k,T)$ is isomorphic to $(Z_k(X),\mathcal{Z}_k(X),\mu_k,T)$ as m.p.s.
The map $p_k:(X,T)\rightarrow (W_k(X),T)$ induces a factor map $$(\C_{\T}^{k+1}(X),\mathcal{HK}^{k+1}(T))\rightarrow(\C_{\T}^{k+1}(W_k(X)),\mathcal{HK}^{k+1}(T)).$$ By Proposition \ref{prop:factor of ue}, as $(\C_{\T}^{k+1}(X),\mathcal{HK}^{k+1}(T))$ is uniquely ergodic so is
$(\C_{\T}^{k+1}(W_k(X)),\mathcal{HK}^{k+1}(T))$.
By Lemma \ref{prop:full support on Ck1} the unique invariant measure on $(\C_{\T}^{k+1}(W_k(X)),\mathcal{HK}^{k+1}(T))$ is $\omega_k^{[k+1]}$.
Let $\gamma_{k+1}$ be the \textit{conditional product measure relative to $(W_k(X)^{[k+1]},\omega_k^{[k+1]})$} on $X^{[k+1]}$ (\cite[Definition 9.1]{F77}). This is the unique measure on $X^{[k+1]}$ such that for all $f_v\in L^\infty(X,\mu)$, $v\in\{0,1\}^{k+1}$ (\cite[Lemma 9.1]{F77}):
\begin{multline}\label{definition of gamma}\int_{X^{[k+1]}} \prod_{v\in \{0,1\}^{k+1}}f_v(c_v)d\gamma_{k+1} (c)=\\\int_{W_k(X)^{[k+1]}} \prod_{v\in \{0,1\}^{k+1}}
\mathbb{E}(f_v|W_k(X))(c_v)d\omega_k^{[k+1]}(c).
\end{multline}
As $\mathbb{E}(\cdot|W_k(X))$ commutes with $T$ and $\omega_k^{[k+1]}$ is $\mathcal{HK}^{k+1}(T)$-invariant, one has that $\gamma_{k+1}$ is $\mathcal{HK}^{k+1}(T)$-invariant.
It is natural to introduce the measure $\gamma_{k+1}$ as by \cite[Chapter 9, Theorem 14]{HK2018}, $\mu^{[k+1]}$ is the  conditional product measure relative to $\mu_{k}^{[k+1]}$. Thus if $\mu_{k}=\omega_k$ then $\gamma_{k+1}=\mu^{[k+1]}$. It turns out one can reverse the direction of implications.
Indeed we claim that $\gamma_{k+1}(\C_{\T}^{k+1}(X))=1$.
Assuming this claim and recalling the assumption that  $(\C_{\T}^{k+1}(X),\mathcal{HK}^{k+1}(T))$ is uniquely ergodic, one has by Lemma \ref{prop:full support on Ck1} that $\gamma_{k+1}=\mu^{[k+1]}$.
With the end goal of showing $\mathcal{Z}_{k}= \mathcal{W}_k$ we start by showing $\mathcal{Z}_{k}\subset \mathcal{W}_k$. It is enough to show $L^\infty(\mu)\cap L^2(\mathcal{W}_k)^\perp\subset L^\infty(\mu)\cap L^2(\mathcal{Z}_{k})^\perp$.
To this end we will show that for any function $f\in L^\infty(\mu)$ such that $\mathbb{E}(f|\mathcal{W}_k)=0$, it holds that $\mathbb{E}(f|\mathcal{Z}_{k})=0$. By Definition \ref{def:The-seminorms}, as $\gamma_{k+1}=\mu^{[k+1]}$,
\begin{multline*}
|||f|||_{k+1}^{2^{k+1}}=\int\prod_{v\in\{0,1\}^{k+1}}\mathcal{C}^{|v|}f(c_v)d\gamma_{k+1}(c)=\\
\int \prod_{v\in \{0,1\}^{k+1}}\mathbb{E}(\mathcal{C}^{|v|}f|W_k(X))(c_v)d\omega_k^{[k+1]}(c).
\end{multline*}
As $\mathbb{E}(f|\mathcal{W}_k)\equiv0$, it holds that $\mathbb{E}(\mathcal{C}^{|v|}f|W_k(X))\equiv0$ for any $v\in \{0,1\}^{k+1}$. Therefore  $|||f|||_{k+1}=0$. This implies by Lemma \ref{lem: seminorm and k factor} that
$\mathbb{E}(f|\mathcal{Z}_{k})=0$ as desired. 
By Remark \ref{rem:classical Z}, $Z_k(X)$ is the maximal measurable $k$-step pronilfactor of $(X,\mu,T)$. As $(W_k(X),\omega_k,T)$ is a $k$-step pronilfactor of $(X,T)$, one has that $\mathcal{W}_k\subset \mathcal{Z}_{k}$. Thus $\mathcal{W}_k=\mathcal{Z}_k$, which implies that $(W_k(X),\omega_k,T)$ is isomorphic to $(Z_{k}(X),\mathcal{Z}_k(X),\mu_{k},T)$ as m.p.s.

As a final step, we will now show that  $\gamma_{k+1} (\C_{\T}^{k+1}(X))=1$. Let $f_v\in L^\infty(X,\mu)$, $v\in \{0,1\}^{k+1}$ and set $H_0=\prod_{v\in \{0\}\times \{0,1\}^k}f_v$ and $H_1=\prod_{v\in \{1\}\times \{0,1\}^k}f_v$ as well as $\hat{H}_0=\prod_{v\in \{0\}\times \{0,1\}^k}\mathbb{E}(f_v|W_k(X))$, $ \hat{H}_1=\prod_{v\in \{1\}\times \{0,1\}^k}\mathbb{E}(f_v|W_k(X))$.
By Equation \eqref{definition of gamma}, we have 
\begin{equation}\label{eq:first}
\int_{X^{[k+1]}} H_0(c)H_1(c')d\gamma_{k+1}(c,c') =\int_{W_k(X)^{[k+1]}} \hat{H}_0(c)\hat{H}_1(c')d\omega_k^{[k+1]}(c,c').
\end{equation}
By Equation \eqref{def muk 2} in Definition \ref{def: muk definition},
\begin{equation}\label{eq:second}
\int_{W_k(X)^{[k+1]}} \hat{H}_0(c)\hat{H}_1(c')d\omega_k^{[k+1]}(c,c')=\int_{W_k(X)^{[k]}} \mathbb{E}(\hat{H}_0|\mathcal{I}_{T^{[k]}})(c)\hat{H}_1(c)d\omega_k^{[k]}(c).    
\end{equation}
By Birkhoff's ergodic theorem (see also Theorem \ref{pointwise ergodic theorem for amenable}), one has that 
\begin{equation}\label{eq:3 lines}\begin{array}{ll}\int_{W_k(X)^{[k]}} \mathbb{E}(\hat{H}_0|\mathcal{I}_{T^{[k]}})(c)\hat{H}_1(c)d\omega_k^{[k]}(c)\\
{\displaystyle =\int\lim_{N\rightarrow \infty}\frac{1}{N}\sum_{n=0}^{N-1}\hat{H}_0((T^{[k]})^nc)\hat{H}_1(c)d\omega_k^{[k]}(c)}\\
={\displaystyle \lim_{N\rightarrow \infty}\frac{1}{N}\sum_{n=0}^{N-1}\int\hat{H}_0((T^{[k]})^nc)\hat{H}_1(c)d\omega_k^{[k]}(c)},
\end{array}
\end{equation}
here we used the Lebesgue dominated convergence theorem. 

Abusing notation one may consider $\hat{H}_0$ and $\hat{H}_1$ as defined on $X^{[k]}$ (see Subsection \ref{Subsec: Con exp}). As $p_k:(X,\mu,T)\rightarrow (W_k(X),\omega_k,T)$ is a measurable factor map, one has 
$$\int\hat{H}_0((T^{[k]})^nc)\hat{H}_1(c)d\omega_k^{[k]}(c)=\int\hat{H}_0((T^{[k]})^nc)\hat{H}_1(c)d\mu^{[k]}(c). $$
 As $(C_T^k(X),\mathcal{HK}^{k}(T))$  is a topological factor of $(\C_{\T}^{k+1}(X),\mathcal{HK}^{k+1}(T))$ w.r.t. the  ``lower'' $2^k$ coordinates, $(C_T^k(X),\mathcal{HK}^{k}(T))$  is uniquely ergodic. By Lemma \ref{prop:full support on Ck1}, the unique ergodic measure is $\mu^{[k]}$. 
 By Theorem \ref{pointwise ergodic theorem for amenable} applied to $(C_T^k(X),\mu^{[k]},\mathcal{HK}^{k}(T))$, there is  a Følner sequence $\{F_M\subset \mathcal{HK}^k(T)\}_{M\in \mathbb{N}}$ such that 
 \begin{equation}\label{eq:first sum}
 \int\hat{H}_0\big((T^{[k]})^nc\big)\hat{H}_1(c)d\mu^{[k]}(c)=\lim_{M\rightarrow \infty}\frac{1}{|F_M|}\sum_{h\in F_M}\hat{H}_0\big((T^{[k]})^nhs\big)\hat{H}_1(hs)    
 \end{equation}
 for $\mu^{[k]}$-a.e. $s\in C_T^k(X)$.
Thus from  Equations \eqref{eq:first},  \eqref{eq:second}, \eqref{eq:3 lines} and \eqref{eq:first sum}, it holds for arbitrary $f_v\in L^\infty(X,\mu)$, $v\in \{0,1\}^{k+1}$, $H_0=\prod_{v\in \{0\}\times \{0,1\}^k}f_v$ and $H_1=\prod_{v\in \{1\}\times \{0,1\}^k}f_v$, for $\mu^{[k]}$-a.e. $s\in C_T^k(X)$, 
\begin{multline}
\label{ergodic convergence for Ck}
    \int_{X^{[k+1]}} H_0(c)H_1(c')d\gamma_{k+1}(c,c')=\\
    \lim_{N\rightarrow \infty}\frac{1}{N}\sum_{n=0}^{N-1}\lim_{M\rightarrow \infty}\frac{1}{|F_M|}\sum_{h\in F_M}\hat{H}_0\big((T^{[k]})^nhs\big)\hat{H}_1(hs)
\end{multline}
Let $R\in C(X^{[k+1]},\mathbb{R})$ be a continuous function. We claim
 for $\mu^{[k]}$-a.e. $s\in \C_{\T}^k(X)$,
\begin{equation}\label{uniformly converges-1}\int R(c)d\gamma_{k+1}(c)=\lim_{N\rightarrow \infty}\frac{1}{N}\sum_{n=0}^{N-1}\lim_{M\rightarrow \infty}\frac{1}{|F_M|}\sum_{h\in F_M} R\big((T^{[k]})^nhs,hs\big)
\end{equation}
Notice that it follows from Definitions \ref{def:HK cube group and face group} and 
\ref{defn of cubesystem} that if $s\in C_T^k(X)$, then $((T^{[k]})^nhs,hs)\in C_T^{k+1}(X)$ for arbitrary $h\in \mathcal{HK}^k(T)$ and $n\in \mathbb{Z}^{+}$ (see also \cite[Subsection A.2]{GGY2018}). Thus using Equation \eqref{uniformly converges-1} with functions $R_\delta\in C(X^{[k+1]},[0,1])$ such that $R_\delta|_{\C_{\T}^{k+1}(X)}\equiv 1$ and $R|_{X^{[k+1]}\setminus B_{\delta}(\C_{\T}^{k+1}(X))}\equiv 0$,  (taking $\delta$ to zero) one obtains: 
 $$\gamma_{k+1}(C_T^{k+1}(X))=1.$$
We now prove \eqref{uniformly converges-1}. For $d\in \mathbb{N}$, let $H_d^{(i)}$ be functions of the form $\prod_{v\in \{0,1\}^{k+1}} h^{(i)}_v$, $i\in I_d$ for some finite set $I_d$, such that $|R(z)-\sum_{i\in I_d}H_d^{(i)}(z)|<\frac{1}{2d}$ for all $z\in \C_{\T}^{k+1}(X)$. 
Denote by $C(R)=\int R(c)d\gamma_{k+1}(c)$ the (LHS) of \eqref{uniformly converges-1}. Denote by 
 $D(R)(z)$ be the (RHS) of Equation \eqref{uniformly converges-1}.  By Equation \eqref{ergodic convergence for Ck},
$C(H_d^{(i)})=D(H_d^{(i)})(z)$ for $\mu^{[k]}$-a.e. $z\in \C_{\T}^k(X)$. Note that $|C(R)-\sum_{i\in I_d}C(H_d^{(i)})|<\frac{1}{2d}$ and $|D(R)(y)-\sum_{i\in I_d}D(H_d^{(i)})(y)|<\frac{1}{2d}$ for all $y\in \C_{\T}^k(X)$. Thus for any $d$, $E_d:=\{y\in \C_{\T}^{k}(X):|C(R)(y)-D(R)(y)|<\frac{1}{d}\}$ has full $\mu^{[k]}$ measure. Let $E=\bigcap_{d\in \mathbb{N}}E_d$, then $\mu^{[k]}(E)=1$ and for any $y\in E$, Equation \eqref{uniformly converges-1} holds.
\end{proof}
The following remark may be of interest:

\begin{rem}
 In \cite[Section 6]{GHSY2019} an example is given showing there exists a strictly ergodic \textit{distal} system which is \textit{not} \cfnone.

\end{rem}

\section{A topological Wiener-Wintner theorem.}\label{sec TWWT}

In this section, we prove Theorem \ref{TWWT main}.

\begin{defn}\label{def:generic}
Let $(X,T)$ be a t.d.s. and $\mu\in \PM(X)$. A point $x\in X$ is \textbf{generic} (for $\mu$) if for all $f\in C(X)$
$$\lim_{N\rightarrow \infty}\frac{1}{N}\sum_{n=0}^Nf(T^nx)=\int f d\mu$$
\end{defn}

\begin{lem} \label{lem:genericity}
Let $(X,T)$ be a t.d.s. and $x_0\in X$. Assume that for all $f\in C(X)$, there exists $c_f\in \R$, a constant depending on $f$, so that :
$$\lim_{N\rightarrow \infty}\frac{1}{N}\sum_{n=0}^N f(T^nx_0)=c_f$$
Then $x_0$ is generic for some $\mu\in \PM(X)$.
\end{lem}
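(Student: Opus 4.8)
The plan is to recognize $f\mapsto c_f$ as a positive, normalized linear functional on $C(X)$, invoke the Riesz representation theorem to produce the candidate measure $\mu$, and then verify $T$-invariance directly from the structure of the Birkhoff-type averages.

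First I would define $L\colon C(X)\to\R$ by $L(f)=c_f$. Linearity of $L$ is inherited from the linearity in $f$ of the finite averages $\frac{1}{N}\sum_{n=0}^{N}f(T^{n}x_0)$, upon passing to the limit. Positivity is immediate: if $f\ge 0$ then every average is nonnegative, hence so is its limit $c_f$. Taking $f\equiv 1$ gives $L(1)=\lim_{N}\frac{N+1}{N}=1$, so $L$ is a state; in particular $|L(f)|\le\|f\|_\infty$, so $L$ is a bounded positive functional. By the Riesz representation theorem there is a unique Borel probability measure $\mu$ on $X$ with $L(f)=\int_X f\,d\mu$ for all $f\in C(X)$.

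It then remains to check $\mu\in\PM(X)$, i.e. $T_*\mu=\mu$. For any $f\in C(X)$ I would compare the average of $f\circ T$ along the orbit of $x_0$ with that of $f$: since $(f\circ T)(T^{n}x_0)=f(T^{n+1}x_0)$, the averages $\frac{1}{N}\sum_{n=0}^{N}(f\circ T)(T^{n}x_0)$ and $\frac{1}{N}\sum_{n=0}^{N}f(T^{n}x_0)$ differ by $\frac{1}{N}\bigl(f(T^{N+1}x_0)-f(x_0)\bigr)$, which tends to $0$ because $f$ is bounded. Hence $c_{f\circ T}=c_f$, that is $\int f\circ T\,d\mu=\int f\,d\mu=\int f\,d(T_*\mu)$; as this holds for every continuous $f$, we conclude $T_*\mu=\mu$. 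Thus $\mu\in\PM(X)$, and by construction $\lim_{N}\frac{1}{N}\sum_{n=0}^{N}f(T^{n}x_0)=\int f\,d\mu$ for every $f\in C(X)$, so $x_0$ is generic for $\mu$.

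No serious obstacle arises here; the argument is the standard construction of a generic measure. The only point requiring any care is the $T$-invariance step, where one must observe that shifting the summation index perturbs the average only by a single boundary term of size $O(\|f\|_\infty/N)$. The hypothesis that the limit $c_f$ exists for \emph{every} $f\in C(X)$ is precisely what makes $L$ well-defined on all of $C(X)$, after which positivity, normalization, and the Riesz correspondence do the rest.
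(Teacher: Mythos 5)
Your argument is correct and follows exactly the paper's own route: define the functional $f\mapsto c_f$, check it is positive, linear and normalized, apply the Riesz representation theorem, and deduce $T$-invariance from $c_{f\circ T}=c_f$ (where the paper leaves the boundary-term estimate implicit and you spell it out). Nothing further is needed.
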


\begin{proof}
Define the functional $\phi:C(X)\rightarrow \R$ by $\phi(f)=c_f$. It is easy 
to see that $\phi$ is a bounded linear positive functional of supremum norm $1$. By the Riesz representation theorem $c_f=\int f d\mu$ for some Borel probability measure $\mu$ on 
$X$ (\cite[Theorem 2.14]{rudin2006real}). As $c_f=c_{Tf}$ for all $f\in C(X)$, it follows that $\mu\in \PM(X)$. Thus $x_0$ is generic by Definition \ref{def:generic}. 
\end{proof}

\begin{thm}(\cite[Theorem 4.10]{G03})\label{thm:genericity}
Let $(X,T)$ be a minimal t.d.s., then  $(X,T)$ is uniquely ergodic iff every  $x\in X$ is generic for some $\mu\in \PM(X)$ (depending on $x$).
\end{thm}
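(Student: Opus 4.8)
The plan is to establish the two implications separately. The forward implication (unique ergodicity $\Rightarrow$ every point is generic) is the easy direction and does not require minimality, while the reverse implication carries the real content and is precisely where minimality will be used.

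For the forward direction I would argue as follows. Suppose $\PM(X)=\{\mu\}$. Fix $x\in X$ and consider the sequence of Borel probability measures $\nu_N=\frac{1}{N}\sum_{n=0}^{N-1}\delta_{T^n x}$ on $X$. By weak-$*$ compactness of the space of probability measures, any subsequential weak-$*$ limit of $\{\nu_N\}$ is $T$-invariant (the standard Krylov--Bogolyubov argument), hence must equal $\mu$. Therefore $\nu_N\to\mu$ weak-$*$, which by definition says $\frac{1}{N}\sum_{n=0}^{N-1}f(T^n x)\to\int f\,d\mu$ for every $f\in C(X)$; that is, $x$ is generic for $\mu$ in the sense of Definition \ref{def:generic}.

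For the reverse direction, assume $(X,T)$ is minimal and every $x\in X$ is generic for some measure. Then for each $f\in C(X)$ the limit $c_f(x):=\lim_{N\to\infty}\frac{1}{N}\sum_{n=0}^{N-1}f(T^n x)$ exists for every $x$ (this is part of genericity), and $c_f$ is $T$-invariant, since passing from $x$ to $Tx$ alters the average only by the boundary term $\frac{1}{N}(f(T^N x)-f(x))$, which tends to $0$. The crucial observation is that $c_f$ is a pointwise limit of the continuous functions $A_N f:=\frac{1}{N}\sum_{n=0}^{N-1}f\circ T^n$, hence is of Baire class one; by the Baire category theorem its set of continuity points is a dense $G_\delta$, in particular nonempty. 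I would then bring in minimality: choosing a continuity point $x_0$ of $c_f$, for an arbitrary $y\in X$ density of the orbit of $y$ supplies $n_k$ with $T^{n_k}y\to x_0$, and continuity of $c_f$ at $x_0$ together with invariance yields $c_f(y)=\lim_k c_f(T^{n_k}y)=c_f(x_0)$. Thus $c_f$ is constant. Finally, for any $\mu\in\PM(X)$, invariance gives $\int A_N f\,d\mu=\int f\,d\mu$ for all $N$, so dominated convergence gives $\int c_f\,d\mu=\int f\,d\mu$; since $c_f$ is the constant $c_f(x_0)$, the value $\int f\,d\mu=c_f(x_0)$ is independent of $\mu$. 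As this holds for every $f\in C(X)$, all invariant measures coincide, i.e. $|\PM(X)|=1$.

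The step I expect to be the \textbf{main obstacle} is the constancy of $c_f$, namely the interplay between the Baire-class-one regularity of the pointwise limit and minimality. This is exactly where minimality is indispensable: without it (for instance two fixed points, where each point is generic for its own point mass) the function $c_f$ can be genuinely nonconstant and the system need not be uniquely ergodic, so the orbit-density step in the argument above is what rules this out.
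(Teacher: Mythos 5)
Your proof is correct. Note that the paper gives no argument of its own for this statement -- it is quoted verbatim from \cite[Theorem 4.10]{G03} -- so there is nothing internal to compare against; your write-up is a complete, self-contained proof of the cited fact. Both directions check out: the Krylov--Bogolyubov argument for the forward implication is standard, and in the reverse implication the chain (pointwise limit of the continuous averages $A_Nf$ is Baire class one, hence has a continuity point $x_0$; $T$-invariance of $c_f$ plus density of every orbit forces $c_f\equiv c_f(x_0)$; dominated convergence then pins down $\int f\,d\mu$ for every invariant $\mu$) is exactly the classical Oxtoby-style argument, and it correctly isolates minimality as the ingredient that rules out examples such as two fixed points. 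The only point worth making explicit is that the $n_k$ produced by minimality may be negative, which is harmless since $c_f\circ T=c_f$ implies invariance under $T^{-1}$ as well.
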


\begin{lem}\label{lem:supp gen}
Let $(X,T)$ be a t.d.s. and $\mu\in \PM(X)$. If a point $x\in X$ is generic for $\mu$, then $\mu$ is supported on $\OC(x)$.
\end{lem}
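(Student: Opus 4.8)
The support of $\mu$ is contained in the closed set $\OC(x)$ precisely when $\mu(\OC(x)) = 1$, so the plan is to show that the open complement $U := X \setminus \OC(x)$ is $\mu$-null. I would argue by contradiction, assuming $\mu(U) > 0$ and producing a continuous test function on which genericity fails.

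First I would use inner regularity of the Borel measure $\mu$ on the compact metric space $X$ to extract a compact set $K \subset U$ with $\mu(K) > 0$. Since $K$ and $\OC(x)$ are disjoint closed sets, Urysohn's lemma furnishes a continuous function $f : X \to [0,1]$ with $f \equiv 1$ on $K$ and $f \equiv 0$ on $\OC(x)$. On the one hand, $\int f \, d\mu \geq \mu(K) > 0$. On the other hand, every orbit point $T^n x$ lies in $\OC(x)$, so $f(T^n x) = 0$ for all $n$, whence
\[
\lim_{N \to \infty} \frac{1}{N} \sum_{n=0}^{N} f(T^n x) = 0.
\]

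Genericity of $x$ for $\mu$ (Definition \ref{def:generic}) asserts that this limit equals $\int f \, d\mu$, forcing $0 = \int f \, d\mu > 0$, a contradiction. Hence $\mu(U) = 0$, i.e.\ $\mu(\OC(x)) = 1$, and $\mu$ is supported on $\OC(x)$.

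There is no serious obstacle here: the only points requiring a little care are the application of inner regularity to replace the open set $U$ by a compact subset of positive measure, and the verification that Urysohn's lemma applies (which is immediate since $X$ is compact metric, hence normal, and $K$, $\OC(x)$ are disjoint and closed). The key idea is simply that genericity constrains $\mu$ through every continuous function, and a function vanishing on the entire orbit closure must integrate to zero, ruling out mass outside $\OC(x)$.
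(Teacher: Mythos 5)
Your proof is correct and follows essentially the same route as the paper's (one-line) argument: genericity forces any continuous function vanishing on $\OC(x)$ to integrate to zero, which rules out mass outside the orbit closure. You have merely made explicit the inner-regularity and Urysohn steps that the paper leaves implicit.
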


\begin{proof}
Let $f$ be a non-negative function supported outside $\OC(x)$. Then $\int f d\mu=\lim_{N\rightarrow \infty}\frac{1}{N}\sum_{n=1}^Nf(T^nx)=0$. Q.E.D.
\end{proof}

\begin{proof}[Proof of Theorem \ref{TWWT main}]

$(I)\Rightarrow (II)$. It follows from 
\cite[Theorem 2.19 and Proposition 7.1]{HK09}.

We will show $(II)\Rightarrow (I)$ inductively. For $k=0$ note that Condition $(II)$ with the constant nilsequence $a(n)\equiv 1$ implies that  for a fixed arbitrary $x\in X$ and every $f\in C(X)$, $\lim_{N\rightarrow \infty}\frac{1}{N}\sum_{n=1}^Na(n)f(T^n x)=\lim_{N\rightarrow \infty}\frac{1}{N}\sum_{n=1}^N f(T^n x)$ exists. From Lemma \ref{lem:genericity}, $x\in X$ is generic for some $\mu_x\in P_T(X)$. By Theorem \ref{thm:genericity}, $(X,T)$  is uniquely ergodic. By assumption  $(X,T)$ is minimal and thus $(X,T)$ is a \cfnk system.

Assume the $(II)\Rightarrow (I)$ holds for $k-1$.  We will now show  $\sim (I)\Rightarrow \,\,\, \sim (II)$ for $k$. Thus we assume that $(X,T)$ is not \cfnkp
\,\,If $(X,T)$ is not \cfnkm then the result follows from the inductive assumption. Thus we may assume $(X,T)$ is \cfnkm and in particular uniquely ergodic. Denote the unique probability measure of $(X,T)$ by $\mu$. By definition one has that $(Z_{k-1}(X),\mathcal{Z}_{k-1}(X),\mu_{k-1},T)$ is isomorphic as an m.p.s. to $(W_{k-1}(X),\omega_{k-1}, T)$, where $\omega_{k-1}$ is the unique ergodic measure of $(W_{k-1}(X),T)$. 

An important result of the \textit{Host-Kra structure theory} is that $\pi:Z_{k}(X) \rightarrow
Z_{k-1}(X)$, determined by $\pi_{k-1}=\pi\circ\pi_k$ (as defined in Definition \ref{def:Z_k}), is a measurable group extension w.r.t. some abelian group $A$ (See \cite[ Section 6.2]{HK05}, \cite[Chapter 9, Section 2.3]{HK2018}). By \cite[Theorem 1.1, proof of Theorem 5.3]{GL2019}, we can find a  topological model $\hat{\pi}:(\hat{Z}_k,T)\rightarrow (\hat{Z}_{k-1},T)$ of  $\pi$ which is an abelian topological group extension w.r.t. the abelian group $A$ such that $(\hat{Z}_k,T)$ is a minimal $k$-step pronilsystem and $(\hat{Z}_{k-1},T)$ is a minimal $(k-1)$-step pronilsystem.
Denote by $\phi$ and $\psi$ the measurable isomorphisms between $Z_k(X)$ and $\h{Z}_k(X)$ and $Z_{k-1}(X)$ and $\h{Z}_{k-1}(X)$ respectively. 
\[
\begin{CD}
Z_k(X) @>{\phi}>> \h{Z}_k(X)      \\
@V{\pi}VV      @VV{\h{\pi}}V\\
Z_{k-1}(X) @>{\psi }>> \h{Z}_{k-1}(X)
\end{CD}
\]
\noindent
For clarity denote $\pi_{Z_k}:=\pi_k$ from the previous paragraph.

Define $\pi_{\h{Z}_k}=\phi\circ\pi_{Z_k}$. Let $p_{k-1}:X\rightarrow W_{k-1}(X)$  be the topological canonical $(k-1)$-th projection.
Let $\pi_{\hat{Z}_{k-1}}=\h{\pi}\circ\pi_{\hat{Z}_{k}}$. 
By Corollary \ref{cor:strong max}(2), $\hat{\pi}\circ \pi_{\hat{Z}_k}$ inherits the maximality property of $\pi_{k-1}=\pi\circ \pi_{Z_k}$. By Corollary \ref{cor:strong max}(1), there exists a measurable factor map $p:\hat{Z}_{k-1}(X)\rightarrow W_{k-1}(X)$ such that $p_{k-1}=p\circ \hat{\pi}\circ \pi_{\hat{Z}_k(X)}$ a.s. As $\hat{Z}_{k-1}(X)$ is isomorphic to both $Z_{k-1}(X)$ and $W_{k-1}(X)$ as m.p.s.\footnote{Here we use that $(X,T)$ is \cfnkmp}, by Theorem \ref{thm: meas coalescence},  $p$ may be chosen to be a topological isomorphism. W.l.o.g. we will assume $p=\id$. Thus we have:

\begin{equation}\label{circ a.e.}
\text{$p_{k-1}(x)=\hat{\pi}\circ \pi_{\hat{Z}_k(X)}(x)$ for $\mu$-a.e. $x\in X$.}
\end{equation}

$$
\xymatrix@R=1.5cm{
  X\ar[d]_{\pi_{Z_k}}\ar[r]^{\id} &\ar[l]X\ar[r]^{\id}\ar[d]_{\pi_{\hat{Z}_k}}& \ar[l] X\ar[dd]^{p_{k-1}} \\
  Z_k(X)\ar@{>}[r]^{\phi}\ar[d]_{\pi}& \ar[l]\hat{Z}_k(X)\ar[d]_{\hat{\pi}} \\
  Z_{k-1}(X)\ar@{>}[r]^{\psi} &\ar[l]\hat{Z}_{k-1}(X)\ar[r]^{\id}&\ar[l]W_{k-1}(X)}
$$

\noindent
We claim that there exists a minimal subsystem $(Y,T\times T)\subset (X\times \hat{Z}_k,T\times T)$ such that $(Y,T\times T)$ is not uniquely ergodic. Assuming this, as by Theorem \ref{thm:genericity} a minimal system is uniquely ergodic if and only if every point is generic,  there exists $(x_3,u_3)\in Y$ such that $(x_3,u_3)$ is not a generic point for any measure. By Lemma \ref{lem:genericity}, there exist continuous functions $h\in C(\hat{Z}_k)$, $f\in C(X)$ such that 
\begin{equation}\label{limit no exist}\lim_{N\rightarrow \infty }\frac{1}{N}\sum_{n=1}^N h(T^nu_3)f(T^nx_3)
\end{equation}does not exist. As $(\hat{Z}_k,T)$ is a $k$-step pronilsystem, $h(T^nu_3)$ is a $k$-step nilsequence (Definition \ref{def:nilsequence}). Thus $(II)$ does not hold as required.

Our strategy in proving the claim is finding a minimal subsystem $(Y,T\times T)$ of $(X\times \hat{Z}_k,T\times T)$ which supports an invariant measure $\nu$, w.r.t which $(Y,T\times T)$ is isomorphic to $(X,\mu, T)$ as an m.p.s. We then assume for a contradiction that $(Y,T\times T)$ is uniquely ergodic.
Next we notice that the strictly ergodic system $(Y,T\times T)$, being measurably isomorphic to $(X,\mu,T)$, has $Z_k(Y)\simeq Z_k(X)$. Moreover as $(Y,T\times T)$ is a minimal subsystem of a product of the two minimal systems, $(X,T)$ and $(\hat{Z}_k,T)$, it maps onto each of them through the first, respectively second coordinate projections. From the projection on $(\hat{Z}_k,T)$, we conclude that $(Y,T)$ has a topological $k$-step pronilfactor $\hat{Z}_k$ which is measurably isomorphic to $Z_k(Y)$. By Proposition \ref{pronilfactor maximal}, one has that $(Y,T)$ is \cfnkp From the projection on $(X,T)$, we conclude by Proposition \ref{factor and k kronecker}, that $(X,T)$ is \cfnkp This constitutes a  contradiction implying  that $(Y,T)$ is not uniquely ergodic as desired.

A natural copy of $(X,\mu, T)$ inside $(X\times \hat{Z}_k,T\times T)$ is given by the \textit{graph joining} of $\pi_{\hat{Z}_k(X)}$, defined by the measure $\mu^{(k)}=(\id\times\pi_{\hat{Z}_k(X)})_*\mu :=\int \delta_x\times \delta_{\pi_{\hat{Z}_k(X)}(x)}d\mu(x)$ on $(X\times \hat{Z}_k,T)$  (see \cite[Chapter 6, Example 6.3]{G03}). Clearly
\begin{equation}\label{eq for k}\id\times\pi_{\hat{Z}_k(X)}:(X,\mathcal{X},\mu,T)\rightarrow (X\times \hat{Z}_k,\mathcal{X}\times \hat{\mathcal{Z}}_k, \mu^{(k)},T\times T),\, x\mapsto (x,\pi_{\hat{Z}_k(X)}(x)).
\end{equation}
is a measurable isomorphism and in particular $\mu^{(k)}$ is an ergodic measure of $(X\times \hat{Z}_k,T\times T)$.
However $(X\times \hat{Z}_k,\mathcal{X}\times \hat{\mathcal{Z}}_k, \mu^{(k)},T\times T)$ is a m.p.s. and not a (minimal) t.d.s. We consider an orbit closure of a $\mu^{(k)}$-generic point $(x_1,\pi_{\hat{Z}_k(X)}(x_1))$ to be determined later. By Lemma \ref{lem:supp gen}, $\mu^{(k)}$ is supported on $\OC(x_1,\pi_{\hat{Z}_k(X)}(x_1))$. However  $(\OC(x_1,\pi_{\hat{Z}_k(X)}(x_1)), T\times T)$ is not necessarily minimal. We thus pass to an (arbitrary) minimal subsystem $(Y,T\times T))\subset (\OC(x_1,\pi_{\hat{Z}_k(X)}(x_1)),T\times T))$. However $\mu^{(k)}$ is not necessarily supported on $Y$.
As explained in the previous paragraph, our final aim  will be to find (a possibly different) invariant measure $\nu\in \PMM(Y)$ which is isomorphic to $\mu$.

As $\hat{\pi}$ is a topological group extension w.r.t. the abelian group $A$, 
\begin{equation}\label{group extension}\id\times \hat{\pi}:(X\times \hat{Z}_k,T\times T)\rightarrow (X\times W_{k-1}(X),T\times T): (x,z)\mapsto (x,\hat{\pi}(z))
\end{equation}
is also a topological group extension w.r.t. the abelian group $A$. Thus $A$ acts on the fibers of $\id\times \hat{\pi}$ transitively and continuously by homeomorphisms. Moreover for all $a\in A$, $(\id\times a)_*\mu^{(k)}$ is an invariant measure on $(X\times \hat{Z}_k,T\times T)$ isomorphic to $\mu^{(k)}$ and thus isomorphic to $\mu$. We will find  $\nu\in \PMM(Y)$ of the form  $\nu=(\id\times a)_*\mu^{(k)}$. Indeed  for $\mu$-a.e. $x\in X$, $(x,\pi_{\hat{Z}_k(X)}(x))$ is a generic point of $\mu^{(k)}$. Using \eqref{circ a.e.}, one may choose $x_1\in X$ such that 
\begin{itemize}
    \item $(x_1,\pi_{\hat{Z}_k(X)}(x_1))$ is a generic point of $\mu^{(k)}$;
    \item $\hat{\pi}(\pi_{\hat{Z}_k(X)}(x_1))=p_{k-1}(x_1)$.
\end{itemize}
  From the second point it follows that: 
 $$\id\times \hat{\pi}:(\OC(x_1,\pi_{\hat{Z}_k(X)}(x_1)),T\times T)\rightarrow (\OC(x_1,p_{k-1}(x_1)),T\times T)$$is a topological factor map. As $p_{k-1}$ is a topological factor map,
 \begin{equation}\label{eq for k-1}
 \id\times p_{k-1}:(X,T)\rightarrow (\OC(x_1,p_{k-1}(x_1)),T\times T), \,x\rightarrow (x,p_{k-1}(x))
 \end{equation}
 is a topological isomorphism. Therefore $(\OC(x_1,p_{k-1}(x_1)),T\times T)$ is minimal. Thus $(\id\times \hat{\pi})_{|Y}:(Y,T)\rightarrow (\OC(x_1,p_{k-1}(x_1)),T)$ factors onto. 
 In particular there exists $z_1\in \hat{Z}_k(X)$, such that $(x_1,z_1)\in Y$ and $\hat{\pi}(z_1)=p_{k-1}(x_1)$.
 As by assumption $\hat{\pi}(\pi_{\hat{Z}_k(X)}(x_1))=p_{k-1}(x_1)$, we can find $a\in A$ such that $a.\pi_{\hat{Z}_k(X)}(x_1)=z_1$.
As $(x_1,\pi_{\hat{Z}_k(X)}(x_1))$ is a generic point of $\mu^{(k)}$, it follows that $(x_1,a.\hat{\pi}_k(x_1))=(x_1,z_1)$ is a generic point of $\nu:=(\id\times a)_*\mu^{(k)}$. Therefore by Lemma \ref{lem:supp gen}, the invariant measure $\nu\simeq \mu$ is supported on the minimal subsystem $\OC{(x_1,z_1)}=Y$. This ends the proof. 
\end{proof}

\specialsectioning
\bibliographystyle{alpha}
\bibliography{references}

\Addresses
\end{document}